\documentclass[reqno]{amsart}

\usepackage{graphicx,pdfsync,amssymb,mathrsfs,amsfonts,amsbsy,color,esint,mathtools,mdframed}
\usepackage{tikz} \usetikzlibrary{decorations.pathreplacing}

\usetikzlibrary{positioning}
\usepackage[shortlabels]{enumitem}
\usepackage{makecell}

\usepackage{booktabs}
\usepackage{array}
\usepackage[table]{xcolor}
\usepackage{hyperref}

\hypersetup{
colorlinks=true,
linkcolor=blue,
filecolor=blue,
urlcolor=blue,
citecolor=blue,
}

\usepackage[utf8]{inputenc}
\usepackage[T1]{fontenc}

\DeclareMathOperator{\supp }{supp}

\DeclareMathOperator{\dist}{dist}

\newtheorem{theorem}{Theorem}[section]
\newtheorem{lemma}[theorem]{Lemma}
\newtheorem{proposition}[theorem]{Proposition}
\newtheorem{definition}[theorem]{Definition}
\newtheorem{corollary}[theorem]{Corollary}
\newtheorem{remark}[theorem]{Remark}

\newtheorem{question}[theorem]{Question}
\newtheorem{problem}[theorem]{Problem}

\setlist[enumerate]{itemsep=3pt}
\setlist[itemize]{itemsep=3pt}

\def \T  {\mathbb{T}} 
\def \R {\mathbb{R}}  
\def \N {\mathbb{N}}  

\def \p {\partial}

\def \ep {\epsilon}
\def \om {\omega}
\def \Om {\Omega}

\numberwithin{equation}{section}

\begin{document}

\title[Flexibility and rigidity for Couette]{Flexibility and rigidity for the Couette flow in the infinite channel}

\author{Dengjun Guo}

\address{Academy of Mathematics and Systems Science, Chinese Academy of Sciences, Beijing, China}

\email{djguo@amss.ac.cn}

\author{Xiaoyutao Luo}

\address{State Key Laboratory of Mathematical Sciences, Academy of Mathematics and Systems Science, Chinese Academy of Sciences, Beijing, China}

\email{xiaoyutao.luo@amss.ac.cn}

\author{Guolin Qin}

\address{State Key Laboratory of Mathematical Sciences, Academy of Mathematics and Systems
Science, Chinese Academy of Sciences, Beijing, China}

\email{qinguolin18@mails.ucas.ac.cn}

\date{\today}

\begin{abstract}
We investigate the existence of stationary and traveling wave solutions to the 2D Euler equations near the Couette flow in the infinite channel $\mathbb{R} \times [-1,1]$. For Sobolev spaces $W^{s,p}$ or H\"older spaces $C^s$, we identify the index $s=  1+  \frac1p $   as the vorticity regularity threshold separating  flexibility from rigidity. Specifically, for any $s<1+  \frac1p$ we prove the existence of $C^\infty$ smooth, compactly supported steady states and traveling waves arbitrarily close to the Couette flow in all $W^{s,p}$ and $C^{1-}$. Conversely, we establish the non-existence of such relative equilibria in $ W^{s,p}$ with $s>1+  \frac1p$ or $C^{1+}$. A notable feature of the variational construction is that these flexible solutions belong to every Gevrey class strictly below the analytic threshold.
\end{abstract}

\maketitle

\section{Introduction}\label{sec:intro}

Shear flows are important coherent structures of the 2D Euler equations. In this paper, we investigate  one of the simplest shear flows, the inviscid Couette flow $(y,0)$ on an infinite channel $\R \times [-1,1]$.

Consider the vorticity perturbation $\om  : \Omega   \to \R$ near Couette flow $(y,0)$ satisfying the 2D Euler equations: 
\begin{equation}\label{eq:eu_eq couette}
\begin{cases}
 \p_t \om +  y\p_x \om + u \cdot \nabla \om = 0 & \\
	\om|_{t= 0 } = \om_{in}   &
\end{cases}	
\end{equation}
In this paper, we restrict our attention to the steady setting:
\begin{equation}\label{eq:steady_euler}
  y\p_x \om + u \cdot \nabla \om = 0 .
\end{equation}

The study of the 2D Euler equations near a specific background state is characterized by a deep dichotomy between flexibility and rigidity. In the literature of fluid dynamics, rigidity refers to the phenomenon where the nonlinear structure of the equations, combined with boundary or geometric conditions, forces any steady solution close to a background state to either be trivial or strictly inherit the symmetries of that background \cite{HamelNadir17, HamelNadir19, GSPSY_21, Ruiz_23, FWZ_24,  WZ_24}. 

Conversely, flexibility describes the failure of this rigidity, allowing for the existence of nontrivial relative equilibria that remain arbitrarily close to the background flow \cite{LinZeng2011,CZEW_23,FMM24}.

The inviscid Couette flow $(y, 0)$ is one of the most studied shear flows of the 2D Euler equations since   Kelvin~\cite{Kelvin1887}. The primary objective of this paper is to investigate the dichotomy between flexibility and rigidity in the steady 2D Euler equations near this specific background state.

This leads to the following basic question:
\begin{question}
Given a Banach space $X$, if $\| \om \|_{X} \ll 1$, is it necessary that $\om=0$?
\end{question}

Since the transport dynamics of the 2D Euler equations are highly sensitive to regularity, the answer depends strongly on the choice of the space $X$. This leads to a more refined formulation in terms of a regularity parameter. Given a family of Banach spaces $X_\alpha$ characterized by a regularity index $\alpha$ (e.g. $H^{\alpha}$, $C^{\alpha}$ etc.),  we seek to identify a critical regularity threshold $\alpha_c$.
\begin{question}
Does there exist a threshold $\alpha_c$ such that $\alpha> \alpha_c $ and $\| \om \|_{X_\alpha } \ll 1$ implies $\om =0$?
\end{question}

 \subsection{Motivation}
Our first motivation comes from the time-dependent problem \eqref{eq:eu_eq couette} and the question of asymptotic convergence. In \cite{GL_2026}, the first two authors proved asymptotic stability for a class of unidirectional shear flows  for which perturbations converge to zero as time tends to infinity, provided there is no interior stagnation point in the background shear.

In our setting on $\mathbb{R} \times [-1,1]$, the Couette flow $(y, 0)$ changes sign across the domain, creating a stagnation line at $y=0$ where fluid particles are at rest. Determining whether nontrivial steady states can exist on this stagnation line clarifies whether the assumption of ``no interior stagnation'' in \cite{GL_2026} is mathematically sharp.

A second motivation comes from the broader program of classifying steady solutions to the 2D Euler equations~\cite{CV_2012,HamelNadir17,HamelNadir19,HamelNadir23,DeregibusEspositoRuiz2025,EHSX_2026}. While these prior works primarily derive rigidity from various geometric conditions to understand the global structure, we focus on the local structure near a particular stationary solution and emphasize the role of regularity.

This regularity-based perspective has already played an important role in the study of inviscid damping. In the pioneering work \cite{LinZeng2011} by Lin and Zeng, it was demonstrated that for the Couette flow on the periodic channel,  non-shear steady solutions exist in $H^{3/2-}$, whereas no non-shear steady or traveling wave solutions exist in $H^{3/2+}$. Notably, \cite{LinZeng2011} can be seen as an improvement of the earlier work~\cite{BE1999} of Burton and Emamizadeh where compactly supported steady vortices near the Couette flow in $\R^2$  are constructed. However, solutions in~\cite{BE1999} are only in $L^p$, which is  insufficient to identify a stability threshold.

Our work extends this regularity-based classification for the Couette flow in the infinite channel $\mathbb{R} \times [-1,1]$. More importantly, we provide a complete classification across the whole range of Sobolev class $W^{1+1/p, p } $.

\subsection{Existence of relative equilibria} 

We postpone a detailed discussion of related work and first state the existence result for nearby steady solutions. 

\begin{theorem}[Flexibility]\label{thm:flex}
Fix $0 < \delta \leq 1$. There exists $\ep_\delta>0 $ with the following property. 

There exists a family of functions $\om_{\ep}: \Om  \to \R  $ for   $0< \ep\leq \ep_\delta$  such that  $\om_{\ep} \in C^\infty_c(\Om)$ is a steady solution to \eqref{eq:eu_eq couette} with \begin{equation}\label{eq:thm:flex}
    \lim_{\ep \to 0+ } \| \om_\ep \|_{W^{
     1+ \frac{1}{p}-\delta,p}   }  =0  \qquad \text{for any $ 1\leq p\leq \infty$}.
\end{equation}

Moreover, for any $0<\ep \le \ep_{\delta}$, the following asymptotic estimates hold:
\begin{itemize}
    \item Anisotropic support: 
    \begin{equation}
    |\supp_y (\om_\ep)| \lesssim \ep |\log \ep|^{\frac{1}{2}}, \qquad |\supp_x  (\om_\ep)|\lesssim |\log \ep|^{-1}.
\end{equation}

\item Quantitative regularity:
\begin{equation}
    | \nabla^k \om_\ep |_{L^{\infty}} \lesssim_{k,\delta} \ep^{1-\delta-k} \quad \text{for any $k \in \N $}.
\end{equation}

\item Qualitative smoothness: the solutions $\om_\ep$ belong to the Gevrey classes ${G}^{1+\delta} $.
\end{itemize}

\end{theorem}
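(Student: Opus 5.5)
We construct $\om_\ep$ as a steady vortex concentrated on the stagnation line $y=0$, using a stream-function formulation. Write the total stream function as $\Psi = \frac{y^2}{2} + \psi$, where $\Delta\psi = \om$ in $\Om$ and $\psi=0$ on $y=\pm1$. A steady solution of \eqref{eq:steady_euler} is then obtained whenever $\om = F(\Psi)$ for a suitable profile $F$. We look for a solution that is localized near the origin and produce it variationally, following the Burton--Emamizadeh/Lin--Zeng philosophy. Concretely, we maximize the energy-type functional
\[
E(\om)=\tfrac12\int \om\, G\om - \int \tfrac{y^2}{2}\om - \int \mathcal{J}_\ep(\om)
\]
over an admissible class of vorticities supported in a box $|x|\le L$, $|y|\le 1/2$, with prescribed mass $\int\om=\kappa_\ep$. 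Here $G$ is the Dirichlet Green operator on the strip, and $\mathcal{J}_\ep$ is a smooth convex penalization chosen so that the Euler--Lagrange profile $F_\ep=(\mathcal{J}_\ep')^{-1}$ is a $C^\infty$ function vanishing to infinite order at the cutoff level. This vanishing is what gives compact support and smoothness. To reach Gevrey $G^{1+\delta}$ rather than merely $C^\infty$, we take $F_\ep(s) = A_\ep\, \phi\big((s-\mu)/\ep^2\big)$, where $\phi(t)=\exp(-t^{-1/\delta})\mathbf 1_{t>0}$ (or its negative, depending on the sign convention for $\om$). This $\phi$ lies in $G^{1+\delta}$, and since composition of Gevrey functions with the real-analytic-plus-$G^{1+\delta}$ function $\Psi$ stays in $G^{1+\delta}$, the resulting $\om$ inherits that regularity.

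The key steps are as follows.
\begin{enumerate}
\item Prove existence of a maximizer by compactness. The $x$-direction is handled by the box constraint together with a concentration-compactness-free argument on the bounded domain.
\item Derive the Euler--Lagrange equation $\om = F_\ep(\Psi - \mu)$ on the set where the box constraint is inactive.
\item Show that the support stays strictly inside the box, so the constraint is not active and $\om$ is a genuine steady solution on all of $\Om$.
\end{enumerate}
For step 3 we need sharp asymptotics of the maximizer as $\ep\to0$. The level set $\{\Psi>\mu\}$ is a thin region around $y=0$, because $\frac{y^2}{2}$ dominates in $y$. Balancing $y^2\sim \ep^2|\log\ep|$ against the logarithmic self-interaction of $G$ gives the $y$-width $\ep|\log\ep|^{1/2}$. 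The $x$-extent is instead determined by the decay of $\psi$ along the stagnation line, and yields $|\log\ep|^{-1}$. In this anisotropic scaling we will prove that $\psi$ is close to an explicit profile, and then read off the support bounds.

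Next we fix the amplitude. Choose $A_\ep$ so that $\|\om_\ep\|_{L^\infty}\sim\ep^{1-\delta}$. Each derivative costs a factor $\ep^{-1}$, coming from the $y$-scale and from the $\ep^2$ rescaling inside $F_\ep$ composed with $\partial_y\Psi\sim y\sim\ep$. This gives $|\nabla^k\om_\ep|_{L^\infty}\lesssim \ep^{1-\delta-k}$. Interpolating these bounds with the support size $|\supp\om_\ep|\lesssim \ep|\log\ep|^{1/2}\cdot|\log\ep|^{-1}$ then gives
\[
\|\om_\ep\|_{W^{s,p}}\lesssim \ep^{1-\delta-s}\,\ep^{1/p}\,|\log\ep|^{C},
\]
which tends to $0$ exactly when $s<1+\frac1p-\delta$. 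This is \eqref{eq:thm:flex}, and it holds uniformly in $p\in[1,\infty]$.

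The main obstacle is step 3: the a priori localization of the maximizer. We must show that its support is contained in the predicted thin region and away from the box boundary, even though Couette shear near $y=0$ provides only quadratic confinement in $y$ and none in $x$. To handle this, we first compare energies with an explicit trial vortex to obtain a lower bound on $E$. From that lower bound we deduce that $\mu$ is large compared to $\ep^2$. We then apply a Green's function pointwise bound, $|\psi|\lesssim \kappa_\ep|\log\ep|$, together with the exponential decay of the strip Green's function in $|x|$, to confine the support. If the $x$-confinement fails, the fallback is to impose odd/even symmetry in $x$ and use Steiner symmetrization in $x$, so that the maximizer becomes monotone in $|x|$ and its support is controlled.
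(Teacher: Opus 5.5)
Your route is the paper's in outline: penalized energy maximization over one-signed profiles, a thin-ellipse trial vortex giving $E\gtrsim\ep^4|\log\ep|$, an Euler--Lagrange equation with a multiplier, Steiner symmetrization and Green-function bounds for the support, and derivative counting. However, the step ``from the lower bound on $E$ we deduce that $\mu$ is large'' does not go through for the penalty you chose.

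The bound $\mu\gtrsim\ep^2|\log\ep|$ is what everything after it rests on. Combined with the Steiner estimate $\mathcal G\om\lesssim\kappa/|x|$ (Lemma \ref{lemma:psi decay by steiner}), it gives compact support with $|x|\lesssim|\log\ep|^{-1}$. It is also what shows $\mathcal G\om-\frac{y^2}{2}\le\mu$ outside your box; without this the profile relation fails off the box, and having the support inside the box is not enough.

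The only link between $E$ and $\mu$ is the identity obtained by testing the Euler--Lagrange equation with $\om$,
\[
\mu\kappa=2E+\tfrac12\int y^2\om-\int\bigl(\om\,\mathcal J_\ep'(\om)-2\mathcal J_\ep(\om)\bigr).
\]
This is informative only if the last integral is bounded independently, by a property of the penalty alone. The paper arranges exactly this. Its $f$ is \emph{linear} for $t\ge1$, so $F$ is quadratic at large amplitude (the homogeneity of the self-energy), and $sF'(s)-2F(s)\lesssim\ep^{1-q}|\log\ep|^{-2}$ pointwise (Lemma \ref{lemma:construction of F}). Hence $\ep^2\int(\om F'-2F)\lesssim\ep^4\ll E$, and so $\alpha\gtrsim\ep^2|\log\ep|$.

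With your saturating $\phi(t)\to1$, this breaks down. $\mathcal J_\ep'$ blows up at $A_\ep$, so $s\mathcal J_\ep'(s)-2\mathcal J_\ep(s)\to\infty$ as $s\uparrow A_\ep$. On the saturated core, the only control of $\om\mathcal J_\ep'(\om)$ is the Euler--Lagrange equation itself, which makes the identity circular. You must do one of two things. Either switch to a profile with linear growth; the penalty is then no longer a barrier, so you need the hard constraint $\om\le\ep^{1-q}$ plus a proof that it is inactive, as in Lemma \ref{lemma:omega upperbound}. Or supply an independent estimate of $\int\om(\mathcal G\om-\frac{y^2}{2}-\mu)_+$, which your outline does not contain.

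Second, the exponent count is off. With amplitude $\ep^{1-\delta}$, your own bound $\|\om_\ep\|_{W^{s,p}}\lesssim\ep^{1-\delta-s+\frac1p}|\log\ep|^{C}$ equals $|\log\ep|^{C}$ at the required index $s=1+\frac1p-\delta$. You therefore get smallness only for $s<1+\frac1p-\delta$, which is not \eqref{eq:thm:flex}. Likewise, your counting gives $|\nabla^k\om_\ep|\lesssim\ep^{1-\delta-k}|\log\ep|^{C_k}$ rather than $\ep^{1-\delta-k}$. The fix is to spend only part of $\delta$ on the amplitude. The paper uses $q=\delta/4$ and amplitude $\ep^{1-q}$, then absorbs all logarithms into a further factor $\ep^{-q}$ to get $|\nabla^k\om|\lesssim\ep^{1-2q-k}$. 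This yields $\|\om\|_{W^{1+\frac1p-\delta,p}}\lesssim\ep^{\delta-2q}=\ep^{\delta/2}$.

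Two smaller points.
\begin{itemize}
\item Your box makes compactness cheap, but the exponential decay of $G$ acts only at horizontal distances $\gtrsim1$. It cannot localize the support at scale $|\log\ep|^{-1}$, so Steiner symmetrization (via Lemma \ref{lemma:psi decay by steiner}) is the actual confinement mechanism, not a fallback.
\item Your Gevrey argument presupposes $\psi\in G^{1+\delta}$. That has to come from local Gevrey regularity for $-\Delta\varphi=h(\varphi)$ (Proposition \ref{prop:semilinear-elliptic-gevrey}), combined with the compact support of $\om$.
\end{itemize}
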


The variational construction used to prove Theorem \ref{thm:flex} is not restricted to stationary solutions. After shifting the background shear potential, the same argument produces traveling waves with any speed \(c\in(-1,1)\).

\begin{corollary}\label{cor:flex}
Fix $0 < \delta\leq 1$ and $c \in (-1, 1)$. There exist $\ep_0=\ep_0(\delta,c)>0$ and  a family of functions $\om_{\ep}: \Om \to \R  $ for   $0< \ep \leq \ep_0$ such that $\om_{\ep} \in C^\infty_c(\Om)$ and $\om_\ep (x - ct, y)$ is a traveling wave solution to \eqref{eq:eu_eq couette} with
\begin{equation}\label{eq:cor:flex}
 \lim_{\ep \to 0+ } \| \om_\ep \|_{W^{
     1+ \frac{1}{p}-\delta ,p}  }  =0  \qquad \text{for any $ 1\leq p\leq \infty$}.
\end{equation}

Moreover, the same anisotropic support bounds, derivative estimates, and Gevrey regularity stated in Theorem \ref{thm:flex} hold for this family as \(\ep\to0\).

\end{corollary}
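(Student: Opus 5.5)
The plan is to reduce the traveling-wave problem to the stationary problem of Theorem~\ref{thm:flex} by a Galilean change of frame, and then redo the variational construction with the stagnation line moved from $y=0$ to $y=c$.

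\textbf{Reduction to a steady problem.} Write the traveling wave as $\om(x-ct,y)$. Then $\p_t\om=-c\p_x\om$, and equation \eqref{eq:eu_eq couette} becomes
\[
(y-c)\p_x\om+u\cdot\nabla\om=0 .
\]
This is the steady equation \eqref{eq:steady_euler} with background shear $y$ replaced by $y-c$. The new background has stream function $\frac12(y-c)^2$ and vanishes on the interior line $y=c$. The condition $c\in(-1,1)$ is exactly what keeps this line strictly inside the channel, at distance $1-|c|>0$ from both walls.

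\textbf{Rerunning the construction.} I would repeat the variational construction behind Theorem~\ref{thm:flex} verbatim, with the shifted shear potential $\frac12 (y-c)^2$ in place of $\frac12 y^2$ and everything concentrated near $(0,c)$ instead of $(0,0)$. Concretely, the steps are:
\begin{enumerate}
\item Maximize or minimize the same energy functional, now with the relative stream-function term built from $(y-c)$.
\item Keep the same admissible class: rearrangements or constraints at scale $\ep$, with support of width about $\ep|\log\ep|^{1/2}$ in $y$ and $|\log\ep|^{-1}$ in $x$.
\item Obtain the Euler--Lagrange relation $\om=f(\psi+\frac12(y-c)^2-\text{const})$, with $f$ the same smooth Gevrey profile.
\end{enumerate}
Translating $y\mapsto y-c$ maps the local problem near $(0,c)$ to the one near $(0,0)$.

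\textbf{Where $c$ enters.} The only $c$-dependence is through the Green's function of the channel $\R\times[-1,1]$. Its regular (Robin-type) part is smooth near $y=c$, with bounds depending on $1-|c|$. Since the solutions concentrate at scale $\ep\to0$, the boundary contributions are lower order. Hence the a priori estimates carry over with constants depending on $c$:
\begin{itemize}
\item the support bounds,
\item the confinement of the support away from $y=\pm1$,
\item the derivative bounds $|\nabla^k\om_\ep|_{L^\infty}\lesssim \ep^{1-\delta-k}$,
\item the Gevrey $G^{1+\delta}$ estimate.
\end{itemize}
This is why the threshold $\ep_0(\delta,c)$ degenerates as $|c|\to1$. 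Given these bounds, the norm decay \eqref{eq:cor:flex} follows exactly as in \eqref{eq:thm:flex}, by interpolation between the $L^\infty$ and derivative bounds on a set of small measure.

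\textbf{Main obstacle.} The step I expect to be hardest is confinement: showing that the maximizer stays compactly supported near $(0,c)$ and away from the walls. This is needed both for the identity $\om_\ep\in C_c^\infty(\Om)$ and for the Euler--Lagrange relation to hold with the correct sign structure. With the wall at distance $1-|c|$ instead of $1$, I would first try to redo the Theorem~\ref{thm:flex} argument by choosing $\ep_0$ small so that $\ep|\log\ep|^{1/2}\ll 1-|c|$. The localization estimates should then hold unchanged, since they use only a neighborhood of the stagnation line of fixed size.
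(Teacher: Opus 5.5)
Your proposal is correct and is essentially the paper's argument. Section~\ref{sec:sketch traveling} likewise replaces $\frac12 y^2$ by $\frac12 (y-c)^2$ in the interaction energy and the strip $\{|y|\le \ep^{1-q}\}$ by $\{|y-c|\le \ep^{1-q}\}$ in the admissible class, reruns Section~\ref{sec:flex} step by step, and then applies the regularity argument from the proof of Theorem~\ref{thm:flex}; your remark that the regular part of the Green function is only needed, and only bounded, near $y=c$ with constants depending on $1-|c|$ is a worthwhile sharpening of that sketch, though two details of your outline need fixing. First, run verbatim, the construction yields the nonpositive vortex $\om=-f\bigl(\ep^{-2}(-\psi-\tfrac12(y-c)^2-\alpha)\bigr)$, not $f(\psi+\tfrac12(y-c)^2-\mathrm{const})$ with the same nonnegative $f$, since a nonnegative profile cannot concentrate at the critical layer; second, the bounds $|x|\lesssim|\log\ep|^{-1}$ and $|y-c|\lesssim\ep|\log\ep|^{1/2}$ are outputs of the Euler--Lagrange equation, the multiplier bound $\alpha\gtrsim\ep^2|\log\ep|$ and Steiner symmetry, not constraints built into the admissible class.
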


\begin{remark} 
We highlight several features of the construction that distinguish it from previous flexibility results.
    \begin{itemize}

    \item Previous flexibility constructions for nearby shear flows were carried out on bounded or periodic channels  \(\mathbb T\times[-1,1]\) ~\cite{LinZeng2011,CZEW_23,DrivasNualart2024} using different methods.  In contrast, our construction   on the infinite channel $\mathbb R\times[-1,1]$ lacks compactness in the horizontal direction. This requires  localization in $x$, concentration near the stagnation line $y=0$, and smallness in the relevant Sobolev scales.

    \item Although variational methods have been widely used to construct steady solutions of the 2D Euler equations, thus far many constructions only possess a finite regularity. In contrast, our penalized variational problem produces a smooth nonlinear vorticity--stream function relation.  To our knowledge, this gives the first Euler variational construction with Gevrey regularity, a regularity question raised in \cite{AbeChoiJeongSimWoo2026}.

\item Compared with the construction of Burton and Emamizadeh \cite{BE1999}, several essential distinctions separate the two works. First, their solutions are constructed in $\mathbb R^2$ by maximization over rearrangement classes, whereas our construction operates in the infinite channel with a penalized energy. Second, the rearrangement framework in \cite{BE1999} yields only $L^p$ regularity, $p>2$,  while our penalized method, by contrast, produces smooth and Gevrey-regular solutions which are small in  $W^{1+1/p-,p}$.

        \item We emphasize  that the geometry of the infinite channel plays a decisive role. The Couette flow has a stagnation line $y=0$, and our steady states are localized precisely near this critical layer. The construction produces a strongly anisotropic profile: the vertical support is much thinner than the horizontal one, while the amplitude remains small enough to approach the threshold $W^{1+1/p,p}$.  

    \end{itemize}
\end{remark}

\subsection{Triviality of relative equilibria}

In the opposite direction, higher regularity forces the perturbation to vanish. The key issue is regularity across the critical layer $y=c$, where the relative horizontal transport speed vanishes.  

\begin{theorem}[Rigidity: Sobolev scale]\label{thm:rigid_intro_1}
For any $1< p < \infty$ and $s> 1+\frac{1}{p}$  there exists $\ep_s>0$ such that the following holds. If $\om \in L^2(\Omega)  $ is a traveling wave solution to \eqref{eq:eu_eq couette} with speed $c \in \R$ and
\begin{equation}\label{eq:thm:rigid}
    \| \om \|_{W^{s,p} (\Omega)} \leq \ep_s,
\end{equation}
then $\om =0$.
\end{theorem}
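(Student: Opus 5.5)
We follow the Lin--Zeng strategy, adapted to the infinite channel and the $W^{s,p}$ scale. Write the traveling wave in the moving frame: $\om(x-ct,y)$ solves $(y-c)\p_x\om + u\cdot\nabla\om=0$, with $u=\nabla^\perp\psi$, $\Delta\psi=\om$, and $\psi=$ const on $y=\pm1$. The total stream function $\Psi=\frac12(y-c)^2+\psi$ then satisfies $\nabla^\perp\Psi\cdot\nabla(\om)=0$.

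\textbf{Step 1: easy cases.} By Sobolev embedding ($s>1+\frac1p$ gives $W^{s,p}\hookrightarrow C^{1,\alpha}$ for some $\alpha>0$ in 2D), the smallness \eqref{eq:thm:rigid} gives $\|\om\|_{C^{1,\alpha}}\lesssim\ep_s$. Elliptic estimates then give $\|\psi\|_{C^{3,\alpha}}\lesssim\ep_s$. Together with $\om\in L^2$, this implies decay of $\om,\nabla\om,u$ as $|x|\to\infty$.

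If $c\notin[-1,1]$ (more precisely, if $|c|\ge 1-\eta$ would still need care), then $\p_y\Psi=(y-c)+\p_y\psi$ is bounded away from zero on most of the channel. Every level set of $\Psi$ is then a graph $y=Y(x)$ connecting $x=-\infty$ to $x=+\infty$. Since $\om$ is constant along it and decays at infinity, $\om\equiv0$. The same argument applies on the region $|y-c|\ge\eta$ for any $c$. So the whole difficulty is the critical layer $|y-c|<\eta$.

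\textbf{Step 2: the critical layer.} Near $y=c$, $\Psi$ has a nondegenerate minimum direction in $y$, since $\p_y^2\Psi=1+O(\ep_s)$. Its level sets are therefore either open curves extending to $|x|=\infty$, on which $\om=0$ as before, or closed curves bounding cat's-eye cells. On a cell region, $\om=F(\Psi)$ locally.

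The key step is to show that no closed streamline exists, or that $\om$ vanishes there. On a closed orbit region $D$ around a critical point $(x_0,y_0)$ of $\Psi$ (a local min, so $y_0\approx c$), write $\om=F(\Psi)$. Differentiating twice in $y$ at the center gives
\[\p_y^2\om(x_0,y_0)=F''(\Psi_0)\,(\p_y\Psi)^2+F'(\Psi_0)\,\p_y^2\Psi = F'(\Psi_0)(1+O(\ep_s)).\]
We then need that $F'$ is controlled by $\nabla\om$ divided by $|\nabla\Psi|\sim|y-c|$. The quantitative version is as follows. Along the vertical segment through $x_0$, $\om(x_0,y)-\om(x_0,y_0)=F(\Psi(x_0,y))-F(\Psi_0)$, with $\Psi-\Psi_0\approx\frac12(y-y_0)^2$. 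Hence $F$ as a function of $\Psi$ is $C^{1}$ in the variable $\sqrt{\Psi-\Psi_0}$. Matching the two sides $y>y_0$ and $y<y_0$ forces $\p_y\om(x_0,y_0)=0$, and the $C^{1,\alpha}$ regularity gives $|F'(\Psi)|\lesssim\|\om\|_{C^{1,\alpha}}|\Psi-\Psi_0|^{(\alpha-1)/2}$.

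\textbf{Step 3: close with the elliptic equation.} Since $\Delta\psi=\om$ and $\Delta\Psi=1+\om$, the cell $D$ must have a definite size. Two facts compete:
\begin{itemize}
\item The existence of a critical point of $\Psi$ requires $\p_y\psi(x_0,y_0)=-(y_0-c)$, and hence $\p_x\psi$ must vary. A closed cell requires $\psi$ to vary in $x$ by an amount comparable to $h^2$ over the cell, where $h$ is its height.
\item $\psi$ is controlled by $\om$ on the cell, giving $\|\om\|_{L^\infty(D)}\lesssim\ep_s h^{1+\alpha}$ from the vanishing in Step 2 of the first derivative at the boundary/center.
\end{itemize}

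We combine these into an inequality of the form $h^2\lesssim\ep_s h^{2+\alpha'}$, or more precisely $\|\om\|\lesssim\ep_s^{\theta}\|\om\|$ after a localized energy estimate of $\int(\Delta\psi)\,\cdot$. This forces $\om\equiv0$ for small $\ep_s$.

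\textbf{Main obstacle.} Step 3 is the main obstacle: producing a clean contradiction inequality that exploits exactly the extra $\alpha>0$ beyond $C^1$. I would first try a Lin--Zeng type integral identity on each cell, testing the equation $\Delta\psi=F(\Psi)$ against $\p_x\psi$ or against $\psi-\bar\psi$, and using the Hardy-type bound $|F'(\Psi)|\lesssim\ep_s|y-c|^{\alpha-1}$, which is integrable across the layer. Together with Poincaré on the thin layer, this should yield $\|\nabla\psi\|_{L^2}^2\lesssim\ep_s\|\nabla\psi\|_{L^2}^2$.
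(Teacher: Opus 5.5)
Your proposal has a genuine gap, and the paper's argument takes a different route that avoids it.

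\textbf{The reduction in Step 1 is false, and it removes exactly the range the statement is about.} In two dimensions, $W^{s,p}(\Om)\hookrightarrow C^{1,\alpha}$ needs $s-\frac2p>1$, i.e.\ $s>1+\frac2p$. The hypothesis $s>1+\frac1p$ in general only gives $\nabla u\in L^\infty$ and $\om\in C^{0,\beta}$. For example, with $p=2$ and $s=\frac32+\delta$ you have $\nabla\om\in H^{1/2+\delta}(\Om)$, which need not be bounded. Everything you build on $\|\om\|_{C^{1,\alpha}}\lesssim\ep_s$ therefore fails in general. This includes the $C^{3,\alpha}$ bound on $\psi$, the constancy of a $C^1$ function $\om$ along streamlines, and the bound on $F'(\Psi)$. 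At best this line proves rigidity for $s>1+\frac2p$, essentially the H\"older theorem, and says nothing for $1+\frac1p<s\le 1+\frac2p$.

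The threshold $1+\frac1p$ is one-dimensional. What the hypothesis actually provides is that, for a.e.\ $x$, the slice $\p_y\om(x,\cdot)$ lies in $W^{s-1,p}_y\hookrightarrow C^{0,s-1-\frac1p}_y$, with $\int_\R\|\p_y\om(x,\cdot)\|^p_{W^{s-1,p}_y}\,dx\lesssim\|\om\|^p_{W^{s,p}}$ (Lemma~\ref{lemma:Sobolev_slicing}). A correct proof may only use H\"older continuity in $y$ across the layer, integrated in $x$ against a mixed norm of the velocity. An example is $\|u^y\|_{L^{2p'}_xL^{2q}_y}\lesssim\|u^y\|_{H^1}$ for large $q<\infty$ (Lemma~\ref{lem:mixed-norm-H1}); this is also where $p<\infty$ enters.

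\textbf{Even granting $C^{1,\alpha}$, Steps 2--3 are not a proof.} The cat's-eye description is only asserted: you claim each closed level set bounds a cell carrying a single $C^1$ profile $\om=F(\Psi)$. There is no control of the critical points, separatrices, or number of cells of $\Psi$, and the closing inequality is left open.

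\textbf{The paper needs no streamline topology.}
\begin{enumerate}
\item From $\Delta u^y=\p_x\om$ and $u^y|_{y=\pm1}=0$ one has $\|\nabla u^y\|_{L^2}^2=-\int u^y\p_x\om$.
\item The equation gives $\p_x\om=-u^y\p_y\om/(y+c+u^x)$, with $|y+c+u^x|\ge\frac12|y-y_*(x)|$. Here $y_*(x)$ is the zero (or an endpoint) of the strictly increasing map $y\mapsto y+c+u^x(x,y)$.
\item Split the $x$-axis according to whether $u^y(x,y_*(x))=0$.
\item Where it vanishes, the 1D Hardy bound $|u^y|\lesssim|y-y_*(x)|\,\mathcal M_y(\p_y u^y)$ cancels the denominator. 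H\"older then bounds this piece by a Lebesgue norm of $\p_y\om$ (controlled by $\|\om\|_{W^{s,p}}$) times $\|\nabla u^y\|_{L^2}^2$.
\item Where it does not vanish, the equation on the layer forces $\p_y\om(x,y_*(x))=0$. The slice H\"older bound then leaves the integrable weight $|y-y_*(x)|^{s-2-\frac1p}$ times $\|\p_y\om(x,\cdot)\|_{W^{s-1,p}_y}$.
\item Both pieces are $\lesssim\ep_s\|\nabla u^y\|_{L^2}^2$, so $u^y\equiv0$. Then $u^x$ is independent of $x$ and lies in $L^2$, hence vanishes, and $\om\equiv0$.
\end{enumerate}

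Your closing suggestion points in this direction, with two corrections. First, the weight must be centered at $y_*(x)$, not at $y=c$. Second, vanishing of $\p_y\om$ on the layer is only available where $u^y\neq0$ there. On the complementary set you must exploit the vanishing of $u^y$ instead.
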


We also establish rigidity in the classical H\"older scale, confirming that $C^{1}$ is the critical threshold.

\begin{theorem}[Rigidity: H\"older scale]\label{thm:rigid_intro_2}
For any $\alpha> 0$ there exists $\ep_\alpha >0$ such that the following holds. If $\om \in L^2(\Omega) $ is a traveling wave solution to \eqref{eq:eu_eq couette} with speed $c \in \R$ and
\begin{equation}\label{eq:thm:rigid2}
    \| \om \|_{C^{1,\alpha}(\Omega)} \leq \ep_\alpha,
\end{equation}
then $\om =0$.
\end{theorem}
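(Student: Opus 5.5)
Theorem~\ref{thm:rigid_intro_2} is the H\"older analogue of Theorem~\ref{thm:rigid_intro_1}, so I will run the same critical-layer argument with $C^{1,\alpha}$ control in place of $W^{s,p}$ control. Write the traveling wave as $\om(x-ct,y)$. Then $(y-c)\p_x\om + u\cdot\nabla\om=0$, which can be written as $\nabla^\perp(\psi_0 + \psi)\cdot \nabla \om = 0$, where $\psi_0=\frac12 (y-c)^2$ is the shifted background stream function and $\Delta\psi=\om$ with $\psi=0$ on $y=\pm1$.

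The first step is to dispose of large speeds. If $|c|>1$ (or $|c|\ge 1$ with $\om$ vanishing near the corresponding wall), then $y-c$ has a fixed sign and is bounded below by a positive constant on most of the channel. Since $\om\in L^2$, elliptic estimates give $\|u\|_{L^\infty}\lesssim \|\om\|_{C^{1,\alpha}}+\|\om\|_{L^2}$. Smallness then makes the total horizontal velocity $y-c+u_1$ nonvanishing, so the transport equation forces $\om$ to be constant along streamlines that run to $x=\pm\infty$. Integrability then gives $\om=0$.

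The main case is $|c|\le 1$, where the critical layer $y=c$ lies in (the closure of) $\Om$. Here I will use the streamline structure. Smallness of $\nabla u$ in $C^{\alpha}$ implies that the total stream function $\Psi=\psi_0+\psi$ satisfies $\p_y^2\Psi = 1+O(\ep)$. Hence $\Psi$ has a unique nondegenerate critical curve $y=\gamma(x)$, with $\gamma$ close to $c$, and $\Psi$ restricted to each vertical line is uniformly convex. The level sets of $\Psi$ are therefore pairs of graphs extending to $x=\pm\infty$, except possibly near the critical curve. Since $\om\to0$ as $|x|\to\infty$ in the $L^2$ sense along open streamlines, $\om$ vanishes on every streamline that reaches infinity.

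The remaining, and hardest, step is to rule out closed cat's-eye streamlines near $\gamma$ carrying nonzero vorticity. The key relation is $\om = F(\Psi)$ near the critical level $\Psi_{\min}$. I will use $C^{1,\alpha}$ regularity as follows. Along a vertical line, $\Psi-\Psi_{\min}\approx \frac12 (y-\gamma)^2$, so $\om(x,y)=F(\Psi_{\min}+\frac12(y-\gamma)^2+\dots)$. Since $\om$ vanishes wherever streamlines are open, I will argue that $\om$ is supported in a region where the deviation of $\Psi$ from $\frac12(y-\gamma)^2$ is controlled by $\|\psi\|$. Differentiating $\om=F(\Psi)$ twice in $y$ gives $\p_y^2\om \sim F''(\Psi)(y-\gamma)^2+F'(\Psi)$. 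Combining this with the H\"older bound on $\nabla\om$ through a scaling argument should force $F'$ to be of size at least comparable to $\om$ divided by the squared width of the eye. The width is itself tied to $\|\om\|$ through $\Delta\psi=\om$, and this relation contradicts smallness unless $\om\equiv0$.

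I expect this quantitative step to be the main obstacle. It requires comparing the amplitude, width and regularity of $\om$ near the critical layer, which is exactly where the threshold $C^1$ must enter. My first attempt would be the simplest version: at the maximum point of $|\om|$, integrate $\om\,\p_y\om$ across the eye and use the identity $\p_x\om\,(y-c+u_1)=-u_2\p_y\om$ to obtain $|\nabla\om|_{C^\alpha}\gtrsim 1$ whenever $\om\neq 0$.
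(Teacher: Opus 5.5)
\textbf{There is a genuine gap, and it sits exactly where the theorem is decided.}

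Showing that open streamlines carry zero vorticity is the easy part. The statement is really about vorticity trapped on closed streamlines near the critical curve. These are precisely the flexible solutions of Theorem~\ref{thm:flex}: $C^{1-}$-small vortices sitting in a cat's eye at the critical layer. For this step you give only heuristics, and they do not work with the regularity you have:
\begin{itemize}
\item Differentiating $\om=F(\Psi)$ twice needs $\p_y^2\om$, which $C^{1,\alpha}$ does not control.
\item $F$ is not known to be differentiable. At an eye centre $\nabla\Psi=0$, so $F'=\p_y\om/\p_y\Psi$ is $0/0$.
\item $F$ need not be a single function across different eyes, since $m(x)=\min_y\Psi(x,\cdot)$ can have many local minima.
\item ``The width is tied to $\|\om\|$ through $\Delta\psi=\om$'' is not an inequality in any definite direction, and the suggested ``first attempt'' at the maximum of $|\om|$ is not an argument.
\end{itemize}
There are also two smaller problems. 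Your bound $\|u\|_{L^\infty}\lesssim\|\om\|_{C^{1,\alpha}}+\|\om\|_{L^2}$ gives no smallness, because $\|\om\|_{L^2}$ is not assumed small; use $\|u\|_{L^\infty}\lesssim\|\om\|_{L^\infty}$ on the strip instead. Second, speeds $1<|c|\le 1+C\ep$ fall into neither of your cases, since the perturbed critical curve can still enter the channel, and $\ep_\alpha$ must be uniform in $c$.

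\textbf{The missing mechanism.} The paper avoids streamline topology altogether.
\begin{enumerate}
\item Testing $\Delta u^y=\p_x\om$ against $u^y$ gives $\|\nabla u^y\|_{L^2}^2=-\int u^y\p_x\om$.
\item Substitute $\p_x\om=-u^y\p_y\om/F_x(y)$ with $F_x(y)=y+c+u^x$. Since $\p_yF_x\ge\frac12$, one has $|F_x(y)|\ge\frac12|y-y_*(x)|$. Here $y_*(x)$ is the zero of $F_x$, or an endpoint if there is none, which covers every $c\in\R$ at once.
\item At $y_*(x)$ the equation forces $u^y\p_y\om=0$. So on each vertical line one of two things happens:
\begin{itemize}
\item $u^y(x,y_*)=0$. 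Then $|u^y|\lesssim|y-y_*|\,\mathcal M_y(\p_y u^y)$ cancels the singular weight.
\item $\p_y\om(x,y_*)=0$. Then $|\p_y\om|\le\ep|y-y_*|^{\alpha}$ makes the weight $|y-y_*|^{\alpha-1}$ integrable.
\end{itemize}
\item Both cases give $\|\nabla u^y\|_{L^2}^2\lesssim\ep\|\nabla u^y\|_{L^2}^2$, so $u^y=0$ and then $\om=0$.
\end{enumerate}

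\textbf{How your route could be closed.} If you want to keep the cat's-eye approach, I believe it can be completed, but the input must be first-order, not second-order.
\begin{itemize}
\item $\om$ vanishes on the boundary of each maximal eye, by continuity from open streamlines.
\item $\p_y\om$ vanishes on the critical curve inside eyes. Where $u^y\ne0$ this comes from $u^y\p_y\om=0$. Where $u^y=0$ it comes from the two-sided symmetry of nested closed level sets.
\item Integrating vertically then gives $\|\om\|_{L^\infty}\lesssim\ep H^{1+\alpha}$, where $H$ is the maximal eye half-width.
\item Convexity of $\Psi$ in $y$ and the logarithmic Green kernel give $H^2\lesssim\operatorname{osc}\psi\lesssim\|\om\|_{L^\infty}H\log(1/H)$.
\end{itemize}
Combining these yields $1\lesssim\ep H^{\alpha}\log(1/H)$, a contradiction. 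With $\alpha=0$ there is no contradiction, which is where the $C^{1}$ threshold enters. You would still have to make the streamline topology rigorous, including saddles, degenerate critical sets of $m$, and walls when $|c|$ is near $1$. The paper's energy argument sidesteps all of that.
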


\begin{remark}
    Theorems \ref{thm:flex}, \ref{thm:rigid_intro_1}, and \ref{thm:rigid_intro_2} together identify the regularity $s=1+\frac1p$ in $W^{s,p}$ as the flexibility threshold for the Couette flow. This demonstrates the distinct 1D nature of the problem and stands in sharp contrast to the Poiseuille flow or Kolmogorov flow~\cite{CZEW_23,DrivasNualart2024}.
\end{remark}
\subsection{Related results}

The study of 2D Euler equations near shear flows has a long history, particularly in connection with asymptotic stability, inviscid damping, and steady solutions. 

\subsubsection*{Flexibility, rigidity, and inviscid damping}

Our work connects the local variational construction of steady states with the flexibility--rigidity dichotomy near shear flows. The behavior of perturbations near fundamental shear flows is intimately tied to regularity, a fact central to the theory of inviscid damping \cite{BedrossianMasmoudi15}. For the Couette flow on a periodic channel, Lin and Zeng \cite{LinZeng2011} demonstrated a sharp regularity threshold $H^{3/2}$ that separates the existence of smooth, localized relative equilibria from rigidity. Since these rigid states do not decay in time, they provide the first counter-examples to inviscid damping in lower order Sobolev spaces. Our results extend this analysis \cite{LinZeng2011} to the infinite channel and to the full $L^p$-Sobolev class, identifying the threshold $W^{1+1/p,p}$ for the flexibility--rigidity dichotomy.

We emphasize that the flexibility/rigidity threshold depends not only on the underlying physical domain, but also on the interplay between the background steady state and the spatial geometry. Using   linearization and bifurcation methods around suitable shear profiles, it was shown in \cite{CZEW_23} that the Poiseuille flow is rigid in $H^{5+}(\T\times [-1,1])$, while the Kolmogorov flow on $\T^2$ is flexible even in the analytic class. Later, in \cite{DrivasNualart2024}, an exact threshold $C^2(\T\times [-1,1])$ was identified for the Poiseuille flow on the periodic channel, while for all integers $n\ge 2$, flexibility below $C^{n}$ was demonstrated for the power-law shear $y^n$. In \cite{DrivasNualart2024}, the velocity formulation is used together with the local stagnation structure, by inserting compactly supported radial vortices into the stagnation region.  These findings show that the flexibility/rigidity threshold is sensitive to both the geometry of the domain and the degeneracy structure of the underlying shear. In the time-dependent setting, related linear mechanisms also appear in the study of metastability and inviscid damping   \cite{LX_19}.

Furthermore, structural rigidity---whereby steady solutions are forced to inherit domain or background symmetries---has been actively classified in various geometries \cite{GSPSY_21,HamelNadir17,CDG_21}.

\subsubsection*{Variational methods for steady flows}
The mathematical construction of steady 2D Euler flows frequently employs variational principles, a tradition tracing back to Kelvin and Arnold \cite{Arnold1966}. The ``vorticity method'' of constructing steady flows  was rigorously developed by Benjamin \cite{Benjamin1976}, Turkington \cite{Turkington1983}, and Burton \cite{Burton1987, Burton1989}. See \cite{FraenkelBerger1974,Norbury1975,FriedmanTurkington1981} for parallel variational developments for vortex rings and multiple vortex configurations.

Recent developments have refined these variational methods for concentrated vortex structures, including Hill's spherical vortex~\cite{Choi24}, vortex dipoles~\cite{AbeChoi22,ChoiJeongSim25,AbeChoiJeong2025,AbeChoiJeongSimWoo2026}, and other vortex configurations~\cite{CWW_20,MR4295232,CWWZ_22,CW2024}.

While the aforementioned literature typically focuses on the global structure  or orbital stability of isolated vortex configurations, our work adapts the variational framework in local neighborhood of the Couette flow, similar to the earlier work  \cite{BE1999} of Burton and Emamizadeh. Unlike the rearrangement setup in \cite{BE1999}, the specific penalized energy functional we engineer allows us to achieve the anisotropic scaling necessary to construct $C^\infty$ smooth, compactly supported relative equilibria that are arbitrarily small in $W^{1+ 1/p,p }$.  The Gevrey smoothness of the vorticity function then naturally leads to the Gevrey smoothness of the final solution.

\subsubsection*{Classification of steady Euler flows}

Complementing our flexibility construction, rigidity for steady Euler flows has also been the subject of recent work. Various geometric and analytical conditions have been established that force solutions to inherit the symmetry of the domain or the background flow, effectively classifying global steady states \cite{GSPSY_21,HamelNadir17,HamelNadir19,HamelNadir23,GXX_24, Ruiz_23, FWZ_24, WZ_24}.

Our rigidity result operates locally, precisely identifying the functional thresholds ($W^{1+1/p+,p}$ and $C^{1+}$) where the nonlinear transport structure rules out small, localized steady perturbations near the critical layer and enforces triviality.

\subsection{Main ideas of the proof}
The proofs have two main components: a variational construction of relative equilibria in low regularity, and transport-driven elliptic estimates that prove triviality at higher regularities.

\subsubsection*{Flexibility via variational constructions}
To prove the existence of nearby steady solutions, we adopt a variational method that has drawn much attention recently~\cite{AbeChoi22,Choi24,ChoiJeongSim25,AbeChoiJeongSimWoo2026,MR4295232}.

The method is based on a maximization problem under suitable constraints. In our setting, the energy consists of three parts~\footnote{For ease of exposition, the signs here are different from the ones used in the construction.}:
\begin{equation}\label{eq:E_max_intro}
\begin{aligned}
    \mathcal{E}(\om) & =\frac{1}{2} \int \om \mathcal{G}\om + \frac{1}{2} \int y^2 \om  + \int F_\ep(\om)  \\
    &:= \underbrace{\mathcal{E}_1(\om)}_{\text{self energy}} +  \underbrace{\mathcal{E}_2(\om)}_{\text{interaction energy}} + \underbrace{\mathcal{E}_3(\om) .}_{\text{penalty energy}}
\end{aligned}
\end{equation}
Using a suitably chosen admissible class, the maximizer yields a steady solution to \eqref{eq:eu_eq couette}. Constructing a solution with smallness in the maximal possible regularity class ($W^{1+1/p -,p}$ and $C^{1-}$) imposes severe structural constraints:

\begin{itemize}
\item \textbf{Amplitude and Sign:} The perturbation must have small $L^\infty$ magnitude in order to remain small in higher Sobolev spaces. Furthermore, the vorticity must be strictly negative ($\omega \le 0$); otherwise, the linear interaction energy $\mathcal{E}_2$ would strictly dominate, driving the vorticity toward the boundaries $y=\pm 1$ and forcing it to vanish.

    \item \textbf{Anisotropy:} The vertical scaling of the solution must be much smaller than its horizontal scaling. Without this anisotropy, the interaction energy $\mathcal{E}_2$ (which scales linearly with amplitude) would overwhelm the self-energy $\mathcal{E}_1$ (which scales quadratically), resulting in a strictly negative total energy and precluding the existence of a localized maximizer.

    \item \textbf{Smoothness:} The penalty term $\mathcal{E}_3$ is designed to keep the solution in the desired small $L^\infty$ regime and to enforce a smooth, $C^\infty$ transition at the boundary of the vortex support.

\end{itemize}

By precisely tuning this anisotropic scaling, the resulting maximizer approaches the $W^{1+1/p,p}$ threshold. Compared with the construction in the periodic channel~\cite{LinZeng2011}, our solution exhibits a clear anisotropic pattern and is more explicit.

Traveling waves with speed $c\in(-1,1)$ are obtained by shifting the background shear potential, equivalently replacing the $y^2$ term in $\mathcal{E}_2$ by the corresponding shifted potential. The exclusion $c=\pm1$ is not merely technical: otherwise one would obtain steady solutions near $(y,0)$ on the channel $\R\times[0,1]$, contradicting the asymptotic convergence result in~\cite{GL_2026}.

\subsubsection*{Rigidity via critical layer analysis}

The rigidity argument in $W^{1+1/p+,p}$ relies on the singular behavior near the critical layer. For a traveling wave, the steady Euler equation dictates that $(y + c + u^x)\partial_x \omega + u^y \partial_y \omega = 0$. The ``critical layer'' occurs where the horizontal transport speed vanishes, $y + c + u^x = 0$. Our proof of non-existence of relative equilibria in $W^{1+1/p+,p}$   was inspired by the argument in~\cite{LinZeng2011}. However, the argument in~\cite{LinZeng2011} does not directly extend to the infinite channel and does not cover the full Sobolev or H\"older scales considered here.

We analyze the elliptic identity $\Delta u^y = \partial_x \omega$. Using the transport equation, the horizontal derivative of the vorticity can be rewritten as $\partial_x \omega = -u^y \partial_y \omega / (y + c + u^x)$. If the perturbation is sufficiently small in a high-regularity space, the critical layer forms a 1D curve. Smallness in $W^{1+1/p+,p}$ enforces a decay rate on the vorticity gradients that is ultimately incompatible with the width and non-integrability of the critical layer. Using Hardy--Littlewood maximal functions and fractional Sobolev embeddings, we show that the resulting energy bounds force the velocity field to vanish identically, establishing rigidity above the threshold.

 The one-dimensional nature of the critical layer is reflected both in the anisotropic scale of the flexibility construction and in the critical derivative count $1+\frac1p$. Since the $C^2$ threshold obtained for the Poiseuille flow in \cite{DrivasNualart2024} is insensitive to dimensionality, whether this anisotropic phenomenon is universal in the steady shear flow flexibility/rigidity problem remains unclear.

\subsection{Organization of the paper}

The paper is organized as follows.
\begin{itemize}
    \item In Section \ref{sec:prelim}, we introduce the functional setup, the Green function estimates, and the auxiliary Sobolev lemmas used throughout the paper.

    \item Section \ref{sec:outline} reduces the proof of the flexibility theorem to the main variational proposition, Proposition \ref{prop:maximizer}.
Section \ref{sec:flex} proves this variational proposition by constructing the maximizer and deriving its Euler--Lagrange equation and support bounds.  In Section \ref{sec:sketch traveling}, we explain the modifications needed to obtain traveling waves.

\item Finally, Section \ref{sec:rigid} proves the rigidity results, Theorems \ref{thm:rigid_intro_1} and \ref{thm:rigid_intro_2}.
\end{itemize}

\subsection*{Acknowledgment}
XL is supported by NSFC No. 12421001 and  No. 12288201. G. Qin was supported by National Key R\&D Program of China (Grant 2025YFA1018400) and NNSF of China (Grant 12471190).

\section{Preliminaries}\label{sec:prelim}

\subsection{Notations} Throughout this paper, we consider the 2D infinite channel $\Omega = \mathbb{R} \times [-1, 1]$. Points in the domain are typically denoted by $z = (x, y)$, where $x \in \mathbb{R}$ is the horizontal variable and $y \in [-1, 1]$ is the vertical variable.

We utilize standard notations for Lebesgue and Sobolev spaces. For $1 \le p \le \infty$, $L^p(\Omega)$ denotes the usual space of Lebesgue measurable functions with finite $p$-norm, denoted as $\|\cdot\|_{L^p}$.

For any measurable set $A \subset \R^n$, $|A|$ denotes its   Lebesgue measure, and $1_A$ denotes its characteristic function. The (essential) support of a measurable function $f$ is denoted by $\supp(f)$. Similarly, $\supp_y(\cdot)$ and $\supp_x(\cdot)$ are defined as the projection onto $x$- or $y$-axis for any two-variable functions.

Finally, $C$ will be used to denote a generic positive constant that may change from line to line. If a constant depends strictly on specific parameters (e.g., $\delta$ or $s$), we will indicate this dependence via subscripts, such as $C_\delta$ or $C_s$.

\subsection{Function spaces}

The space of infinitely differentiable functions with compact support in $\Omega$ is denoted by $C_c^\infty(\Omega)$.

For fractional Sobolev spaces $W^{k+s,p }(\Omega)$ with $1\le p \le \infty $, $k\in \N$, and $s \in (0,1)$, the norm is given by $\|f\|_{W^{k+s,p}(\Omega)} = \|f\|_{W^{k,p}(\Omega)} + [\nabla^k f]_{W^{s,p} (\Omega)} $, where the Gagliardo semi-norm is defined as
$$
[f]_{W^{s,p}(\Omega)} = \left( \int_\Omega \int_\Omega \frac{|f(x) - f(y)|^p}{|x - y|^{n+sp}} \, dx \, dy \right)^{1/p}.
$$
For any  $s \ge 0$ and $p=2$, $H^s(\Omega)$ denotes the   Sobolev space $W^{s,2}(\Omega)$.

We denote the space of H\"older continuous functions by $C^{k,\alpha}(\Omega)$ for $k \in \mathbb{N}$ and $\alpha \in (0,1)$, equipped with the standard norm $\|\cdot\|_{C^{k,\alpha}}$,
$$
\| f \|_{C^{k,\alpha}(\Omega)} = \| f \|_{C^{k }(\Omega)} + \sup_{z\ne z'} \frac{|\nabla^{k} f(z) -\nabla^{k} f(z') |}{|z-z'|^\alpha} .
$$

Note that   for any $k\in \N$, $ s\in(0,1)  $, the space \(W^{k+s,\infty}(\Omega) = C^{k,s}(\Omega)\).

We use the following definition of Gevrey spaces to measure the regularity of the solutions.
\begin{definition}[Gevrey spaces]
Let $s\ge 1$. We say that a function $f\in C^\infty(\Omega)$ belongs to the
Gevrey class $G^s(\Omega)$ if there exist constants $K_0,K_1>0$ such that
\[
\|\partial^\alpha f\|_{L^\infty(\Omega)}
\le K_0 K_1^{|\alpha|} (|\alpha|!)^s
\qquad \text{for all multi-indices } \alpha\in \mathbb N^2.
\]
Here $\partial^\alpha=\partial_x^{\alpha_1}\partial_y^{\alpha_2}$ and
$|\alpha|=\alpha_1+\alpha_2$.

More generally, for an interval $I\subset \mathbb R$, we say that $g\in C^\infty(I)$
belongs to $G^s(I)$ if there exist constants $K_0,K_1>0$ such that
\[
|g^{(m)}(t)| \le K_0 K_1^m (m!)^s
\qquad \text{for all } m\in \mathbb N,\ t\in I.
\]
When $s=1$, this is the real-analytic class.
\end{definition}

 For a proof of the following proposition, see for instance \cite{Friedman58,Tri_03}.

\begin{proposition}[Local Gevrey regularity without loss]\label{prop:semilinear-elliptic-gevrey}
Let $\Omega\subset\mathbb R^n$ be open, let $s>1$, and let $h\in G^s(\mathbb R)$.
Assume that $u\in C^\infty(\Omega)$ solves
\[
-\Delta u = h(u)\qquad\text{in }\Omega.
\]
Then
\[
u\in G^s_{\mathrm{loc}}(\Omega).
\]
\end{proposition}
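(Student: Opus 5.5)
The plan is to prove Proposition~\ref{prop:semilinear-elliptic-gevrey} by estimating all derivatives of $u$ on a nested family of balls, in the spirit of Friedman's method. The statement is local, so fix a ball $B_{R}(z_0)\Subset\Omega$. It suffices to show that there are constants $K_0,K_1$ with
\[
\|\partial^\alpha u\|_{L^\infty(B_{R/2})}\le K_0K_1^{|\alpha|}(|\alpha|!)^s
\]
for every multi-index $\alpha$. I will not estimate $L^\infty$ directly. Instead I will prove a weighted $L^2$ bound of Friedman type,
\[
\sup_{0<\rho<R}\rho^{m}\|\nabla^{m}u\|_{L^2(B_{R-\rho})}\le C_0 C_1^{m}(m!)^{s},
\]
and argue by induction on $m$. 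The weight $\rho^m$ (distance to the boundary to the power of the order) is what lets the Caccioppoli cutoff losses add up to $m!$ and not $(m!)^2$. The $L^\infty$ bound then follows from Sobolev embedding on slightly smaller balls, at the cost of changing the constants geometrically.

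For the inductive step, apply $\partial^\alpha$ with $|\alpha|=m-1$ to the equation to get $-\Delta\partial^\alpha u=\partial^\alpha(h(u))$. Use interior $H^2$ elliptic estimates (Caccioppoli on $B_{R-\rho}\subset B_{R-\rho/2}$). This gives $\rho^{m+1}\|\nabla^{m+1}u\|$ in terms of $\rho^{m+1}\|\partial^\alpha h(u)\|$, plus $\rho^m\|\nabla^m u\|$ and $\rho^{m-1}\|\nabla^{m-1}u\|$ on the larger ball. The lower-order terms are controlled by the induction hypothesis, with the ratio $(R-\rho)$ versus $(R-\rho/2)$ producing only factors like $2^m$, which are absorbed into $C_1^m$.

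The nonlinear term $\partial^\alpha h(u)$ is expanded by the multivariate Faà di Bruno formula:
\[
\partial^\alpha h(u)=\sum_{k\le m-1}h^{(k)}(u)\sum\prod_{j=1}^k\partial^{\beta_j}u .
\]
Here $|h^{(k)}|\le K_0K_1^k(k!)^s$ by the assumption $h\in G^s(\mathbb R)$, evaluated on the bounded range of $u$ on the ball. The products are estimated by putting the top-order factor in $L^2$ and the rest in $L^\infty$. Those $L^\infty$ factors are controlled through the inductive bounds at lower orders via Sobolev embedding, which costs two derivatives. To avoid that loss breaking the induction, I will run the induction simultaneously on $L^2$ bounds of $\nabla^m u$ and $L^\infty$ bounds of $\nabla^{m-2}u$. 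Alternatively, I can work in the Banach algebra $H^{k}$ with $k>n/2$, applied to $\nabla^{j}u$.

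The main obstacle is the combinatorial step: showing that the Faà di Bruno sum with Gevrey weights closes, i.e.
\[
\sum_{k}(k!)^s\sum_{\beta_1+\cdots+\beta_k=\alpha}\frac{\alpha!}{\prod\beta_j!}\prod_j(|\beta_j|!)^s\le C\,C_1^{m}(m!)^s
\]
with $C$ independent of $m$ after choosing $C_1$ large. I plan to handle it with the standard majorant-series trick. Compare with the composition of one-variable series $\sum_k c_kt^k$, with $c_k\sim K_1^k(k!)^{s-1}$, and $\sum_j a_jt^j$, with $a_j\sim C_1^j(j!)^{s-1}$. Then use the fact that $G^s$ is closed under composition for $s\ge1$ (the inequality $(k!)^{s-1}\prod(|\beta_j|!)^{s-1}\le(m!)^{s-1}$ when $\sum|\beta_j|=m$, $k\le m$). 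After that, only the analytic-type Faà di Bruno estimate remains, and it is classical.

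Finally, I will choose $C_1$ large relative to $K_1$, the elliptic constants and $\|u\|_{C^2(B_R)}$, so that the inductive inequality reproduces itself with the same constants; this yields $u\in G^s(B_{R/2})$.
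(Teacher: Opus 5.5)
The paper does not prove this proposition; it quotes it from Friedman \cite{Friedman58} (see also \cite{Tri_03}). Your outline tries to reconstruct Friedman's nested-domain induction, which is the right strategy. As written, however, the induction does not close, for two concrete reasons.

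The first is the cutoff. Set $N_m:=\sup_{0<\rho<R}\rho^m\|\nabla^m u\|_{L^2(B_{R-\rho})}$. With the Caccioppoli step from $B_{R-\rho}$ to $B_{R-\rho/2}$, the term $\rho^m\|\nabla^m u\|_{L^2(B_{R-\rho/2})}$ is only bounded by $2^m N_m$. Reproducing the hypothesis would need $C\,2^m C_1^m(m!)^s\le C_1^{m+1}((m+1)!)^s$, i.e.\ $C\,2^m\le C_1(m+1)^s$, which fails for large $m$. This is not a one-time geometric loss that $C_1^m$ can absorb: it recurs at every step, and the recursion $N_{m+1}\lesssim 2^mN_m+\cdots$ only gives bounds of size $2^{m^2/2}$. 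The device that makes the weight $\rho^m$ work is to shrink by $\rho/(m+1)$, passing from $B_{R-\rho}$ to $B_{R-\rho m/(m+1)}$. The cutoff then costs $(m+1)/\rho$ per derivative, while the change of weight costs only $(1+1/m)^m\le e$. The resulting factor $m+1$ per step is exactly what accumulates to $m!$, and it is absorbed because $s\ge1$.

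The second is closing the nonlinear term with the \emph{same} constants, which you rightly flag as the main obstacle but do not resolve.

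\begin{itemize}
\item Your displayed inequality omits the factor $1/k!$ of the Fa\`a di Bruno coefficients. For $\alpha=(m,0,\dots,0)$ the term $k=m$ alone equals $(m!)^{s+1}$.
\item Even corrected, it is a constant-free counting bound and misses the real issue. Inserting the hypothesis $C_0C_1^{\ell}(\ell!)^s$ into $h^{(k)}(u)\prod_{j=1}^k\partial^{\beta_j}u$ produces one factor $C_0$ per factor, plus $K_1^k$ from $h^{(k)}$.
\item After your reduction by $(k!)^{s-1}\prod_j(|\beta_j|!)^{s-1}\le(m!)^{s-1}$, the classical analytic composition estimate gives $h\circ u$ a strictly worse constant. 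With the majorants $\sum_kK_1^kw^k$ and $\sum_{\ell\ge1}C_0C_1^{\ell}t^{\ell}$, the radius drops from $1/C_1$ to $1/(C_1(1+K_1C_0))$. That is a loss $(1+K_1C_0)^m$, and still $(1+K_1C_0/C_1)^m$ if you offset the exponent by one.
\item The elliptic step gains only a fixed power of $C_1$ times roughly $m^{2s}$. So enlarging $C_1$ does not cure this, and ``$G^s$ is closed under composition'' is qualitative: it does not reproduce $C_1$.
\end{itemize}

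What is missing is a Friedman-type normalization. You need a hypothesis shifted in both the geometric and the factorial part, e.g.\ $C_0C_1^{\ell-2}((\ell-2)!)^s$ for $\ell\ge2$, with first-order factors bounded directly by $\|u\|_{C^1}$. You then need the combinatorial lemma that, for such weights, bounds the $k$-fold Fa\`a di Bruno sums by $\theta^k$ times the target, uniformly in $m$, with $\theta\to0$ as $C_1\to\infty$.

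A related gap: you bound the non-top factors in $L^\infty$ from your $\rho$-weighted $L^2$ quantities via Sobolev embedding. Per factor, this costs either a factor $\rho^{-n/2}$ or a ball-enlargement loss. The $\rho^{-n/2}$ arises because the two weightings are dimensionally different, so it is not uniform as $\rho\to0$; either loss accumulates over the $k$ factors. One common remedy is to run the whole induction with weighted H\"older norms and Schauder estimates. There each factor's supremum is controlled on the same ball, and products are estimated without loss.
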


\subsection{Stream-vorticity formulation and the Green function }
For the 2D Euler equations on the infinite channel $\Omega = \mathbb{R} \times [-1, 1]$, we  introduce a scalar stream function $\psi$, such that $u = \nabla^\perp \psi = (\partial_y \psi, -\partial_x \psi)$. The vorticity $\omega$ is related to the stream function through   
\begin{equation}
\begin{cases}
    -\Delta \psi = \omega \quad \text{in } \operatorname{int} \Omega  &\\
    \psi =0 \quad \text{on } \p \Omega &
\end{cases}
\end{equation} 
 with the usual Dirichlet boundary condition $\psi = 0$ on $\partial\Omega = \{ y = \pm 1 \}$.

The stream function can be recovered from the vorticity via the Green operator $\mathcal{G}$, defined as:$$\psi(z) = \mathcal{G}\omega(z) := \int_{\Omega} G(z, z') \omega(z') dz',$$
where $G(z, z')$ is the Green function for the negative Dirichlet Laplacian on the infinite channel $\Omega$. It is a standard fact that $G(z, z')$ is  positive for $z, z' \in \operatorname{int}(\Omega)$, and behaves like $-\frac{1}{2\pi} \log|z-z'|$ as $|z -z'| \to 0 $. Indeed,
  the Green function is given explicitly by
\begin{equation}
 G(x,y, x',y') =  \frac{1}{4\pi} \ln \left( \frac{\cosh\left[\frac{\pi}{2} (x-x')\right] - \cos\left[\frac{\pi}{2}(y+y')\right]}{\cosh\left[\frac{\pi}{2}(x-x')\right] - \cos\left[\frac{\pi}{2}(y-y')\right]} \right) .
\end{equation}
Though we will not use this precise formula in this paper.

Because $\Om$ is bounded in the $y$-direction and infinite in the $x$-direction, the kernel exhibits an isotropic singular behavior at short distances but decays exponentially at large horizontal distances. We record this fundamental property in the following lemma:
\begin{lemma}\label{lemma:kernel G}
    The kernel $G  (x,y,x',y') : \Omega  \times \Om  \to \R$ satisfies
    \begin{equation}\label{eq bound K}
        |G(z,z')| \lesssim 
    \begin{cases}
    1 - \log{|z-z'|} &\quad\text{if $|z-z'|\le 1 $  } \\
 e^{-\frac{\pi}{2}|x-x'|} &\quad\text{if $|z-z'| \ge 1 $  }. 
\end{cases}
    \end{equation}
\end{lemma}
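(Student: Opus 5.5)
We estimate the Green function through its eigenfunction expansion in the vertical variable, treating the near-diagonal and far-field regimes separately. The Dirichlet eigenfunctions of $-\partial_y^2$ on $[-1,1]$ are $\phi_n(y)=\sin\big(\tfrac{n\pi}{2}(y+1)\big)$, with eigenvalues $\lambda_n=(n\pi/2)^2$, $n\ge1$. They are orthonormal in $L^2([-1,1])$. Solving the resulting one-dimensional problems $-\partial_x^2+\lambda_n$ in $x$ gives
\[
G(x,y,x',y')=\sum_{n\ge1}\frac{1}{n\pi}\,e^{-\frac{n\pi}{2}|x-x'|}\,\phi_n(y)\phi_n(y').
\]
Alternatively, everything could be read off from the explicit logarithmic formula; we use it below as a cross-check.

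\textbf{Far field, $|z-z'|\ge1$.} First suppose $|x-x'|\ge \tfrac12$. Then the series converges absolutely, and we bound $|\phi_n|\le1$ termwise:
\[
|G|\le \sum_{n\ge1}\frac{1}{n\pi}e^{-\frac{n\pi}{2}|x-x'|}\le e^{-\frac{\pi}{2}|x-x'|}\sum_{n\ge1}\frac1{n\pi}e^{-\frac{(n-1)\pi}{4}}\lesssim e^{-\frac{\pi}{2}|x-x'|}.
\]
In the second inequality we used $|x-x'|\ge\tfrac12$ for the factors beyond the first. Now suppose $|x-x'|<\tfrac12$ but $|z-z'|\ge1$. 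Then $|y-y'|\ge \sqrt3/2$, so the two points are separated by a fixed positive distance. In this regime $G$ is bounded, and we check this on the explicit formula. The denominator there is $\cosh(\tfrac{\pi}{2}(x-x'))-\cos(\tfrac\pi2(y-y'))$. It is bounded below by a positive constant whenever $|z-z'|\ge \tfrac{\sqrt3}{2}$, because $\cos(\tfrac\pi2(y-y'))\le\cos(\tfrac{\sqrt3\pi}{4})<1$. The numerator is at most $\cosh(\pi/4)+1$. The logarithm is nonnegative, since $|y+y'|\le 2-|y-y'|$ makes the numerator at least the denominator. Hence $G=O(1)$, and since $e^{-\frac\pi2|x-x'|}\ge e^{-\pi/4}$ when $|x-x'|<\tfrac12$, this is consistent with the bound $e^{-\frac{\pi}{2}|x-x'|}$.

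\textbf{Near field, $|z-z'|\le1$.} We write $G(z,z')=-\frac1{2\pi}\log|z-z'|+H(z,z')$, where $H$ is the regular part. Using the explicit formula, the denominator satisfies $\cosh a-\cos b\ge c_0(a^2+b^2)$ for $|a|,|b|\le \pi$. Here $a=\tfrac{\pi}{2}(x-x')$ and $b=\tfrac\pi2(y-y')$. Therefore the contribution of the denominator is at most $\frac{1}{4\pi}\big(-\log(c_0|z-z'|^2)+C\big)=-\frac1{2\pi}\log|z-z'|+C$. The numerator is bounded above by $\cosh(\pi/2)+1$. Its smallness near the boundary only lowers $G$, and $G\ge0$, so $|G|=G$. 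Altogether $|G|\lesssim 1-\log|z-z'|$.

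\textbf{Main obstacle.} The only delicate point is uniformity near the boundary corners $y,y'\to\pm1$, where the image charge comes close. The positivity observation (numerator $\ge$ denominator) lets us discard it entirely, so that only upper bounds on the numerator are needed.
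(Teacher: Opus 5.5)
The paper gives no proof of this lemma. It is recorded as a standard fact right after the displayed closed-form kernel, which the authors say they will not use, so there is nothing to compare against and your proposal supplies the missing argument. The structure is sound:
\begin{itemize}
\item The mode expansion is correct: the $\phi_n$ are orthonormal, and the coefficient $\frac{1}{n\pi}=\frac{1}{2\sqrt{\lambda_n}}$ comes from the 1D resolvent.
\item The series gives the sharp rate $e^{-\frac{\pi}{2}|x-x'|}$ for $|x-x'|\ge\frac12$.
\item The remaining cases reduce to an upper bound on the numerator and the lower bound $\cosh a-\cos b\ge c_0(a^2+b^2)$ on the denominator.
\item You rightly avoid claiming that the regular part $H$ is bounded below. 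It is not, as $z,z'$ approach $y=\pm1$ together; you use only $G\ge 0$.
\end{itemize}

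The step that needs repair is the positivity claim ``numerator $\ge$ denominator''. For the kernel as printed in the paper, with numerator $\cosh[\frac{\pi}{2}(x-x')]-\cos[\frac{\pi}{2}(y+y')]$, this is false. There, numerator minus denominator equals $2\sin(\frac{\pi}{2}y)\sin(\frac{\pi}{2}y')$, which is negative when $yy'<0$.

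The printed expression is in fact the Green function of $\mathbb{R}\times(0,2)$, not of $\mathbb{R}\times(-1,1)$. It vanishes identically when $y'=0$, does not vanish at $y=\pm1$, and has a spurious logarithmic singularity at $z=(x',-y')$, so the lemma would fail for it.

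Sum your own series using the identity $\sum_{n\ge1}\frac1n e^{-na}\sin n\theta\sin n\theta'=\frac14\log\frac{\cosh a-\cos(\theta+\theta')}{\cosh a-\cos(\theta-\theta')}$ with $\theta=\frac{\pi}{2}(y+1)$ and $\theta'=\frac{\pi}{2}(y'+1)$. This gives the correct kernel
\[
G(z,z')=\frac{1}{4\pi}\log\frac{\cosh[\frac{\pi}{2}(x-x')]+\cos[\frac{\pi}{2}(y+y')]}{\cosh[\frac{\pi}{2}(x-x')]-\cos[\frac{\pi}{2}(y-y')]} .
\]
For this formula, your inequality $|y+y'|\le 2-|y-y'|$ yields exactly $\cos[\frac{\pi}{2}(y+y')]\ge-\cos[\frac{\pi}{2}(y-y')]$, i.e.\ numerator $\ge$ denominator. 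Your upper bound $\cosh a+1$ on the numerator is unchanged.

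So write this formula out explicitly and say where it comes from, rather than citing ``the explicit formula''. Either derive it from your series, or from the conformal map $z\mapsto e^{\frac{\pi}{2}(z+i)}$ onto the upper half-plane. Alternatively, obtain $G\ge0$ directly from the maximum principle. With that fix, your proof is complete.
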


\begin{lemma}\label{lemma:G bounds}
    Assume that $g:\Omega\to\mathbb{R}$ satisfies
    \[
    \|g\|_{L^1(\Omega)}\lesssim \ep^2,
    \qquad
    \|g\|_{L^\infty(\Omega)}\lesssim \ep^{1-q}
    \]
    for some $0<q \le \frac12$. Then there holds
    \begin{equation}\label{eq control psi}
        \int_{\Omega} |G(z,z')g(z')|\,dz' \lesssim |\log \ep| \ep^2.
    \end{equation}
    
\end{lemma}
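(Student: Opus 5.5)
We split the integral $\int_\Omega |G(z,z')g(z')|\,dz'$ according to the two regimes of Lemma~\ref{lemma:kernel G}, and split the near regime once more at the scale $\ep$. Since the bound must be uniform in $z\in\Omega$, all estimates are carried out for a fixed but arbitrary $z$.

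\emph{Far region} $|z-z'|\ge 1$. Here Lemma~\ref{lemma:kernel G} gives $|G(z,z')|\lesssim e^{-\frac{\pi}{2}|x-x'|}\le 1$. This contribution is therefore bounded by
\[
\|g\|_{L^1}\lesssim \ep^2 ,
\]
which is acceptable.

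\emph{Intermediate region} $r\le |z-z'|\le 1$, with $r=\ep$. Here $|G(z,z')|\lesssim 1+|\log r|\lesssim |\log\ep|$. The contribution is at most
\[
|\log \ep|\,\|g\|_{L^1}\lesssim |\log\ep|\,\ep^2 ,
\]
which is exactly the claimed size.

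\emph{Innermost disc} $|z-z'|\le r=\ep$. Here we use the $L^\infty$ bound on $g$ together with the integrability of the logarithm in two dimensions. In polar coordinates,
\[
\int_{|z-z'|\le r}(1-\log|z-z'|)\,dz'\lesssim \int_0^r (1-\log\rho)\,\rho\,d\rho\lesssim r^2(1+|\log r|).
\]
This part is thus bounded by
\[
\ep^{1-q}\cdot \ep^{2}|\log\ep| = \ep^{3-q}|\log\ep|\le \ep^2|\log\ep| ,
\]
because $q\le \tfrac12<1$. The choice $r=\ep$ is consistent: the disc integral is at most $\ep^{3-q}|\log \ep|$, which is at most $\ep^2|\log\ep|$ since $q\le 1$.

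Summing the three pieces gives \eqref{eq control psi}.

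The only point requiring care is the choice of cutoff radius. With $r=\ep$ the innermost-disc term is dominated by $\ep^2|\log\ep|$ precisely because $q<1$. The intermediate region then costs only $\log(1/\ep)$ times the $L^1$ mass. The hypothesis $q\le \tfrac12$ is more than needed here; presumably it is used elsewhere in the paper. No step is a genuine obstacle.
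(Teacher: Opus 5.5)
Your proof is correct and follows the same strategy as the paper: you bound the near field by $\|g\|_{L^\infty}$ times the integral of the logarithm over a small disc, and the rest by $\|g\|_{L^1}$ times the size of $|G|$ at the cutoff scale. The differences are cosmetic. The paper cuts at $R=\ep^{\frac{1+q}{2}}$ so that both pieces are exactly of size $\ep^2|\log\ep|$, while your cutoff $r=\ep$ makes the inner piece smaller, $O(\ep^{3-q}|\log\ep|)$; you also treat $|z-z'|\ge 1$ separately via the exponential decay in Lemma~\ref{lemma:kernel G}, a case the paper handles only implicitly.
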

\begin{proof} 

We use the following bound on the Green function
\begin{equation}\label{eq:aux lemma:G bounds 1}
|G(z,z')| \leq C  (1 + |\log|z-z'||) \quad \text{for all } z, z' \in \Omega .
\end{equation}
Let $B_R (z) $ be the ball of radius $R=\ep^{\frac{1+q}{2}}$ centered at $z\in \Om$. Then
\begin{equation}\label{eq:aux lemma:G bounds 2}
    \int_{\Omega} |G(z,z')g(z')|\,dz' = \int_{\Omega \cap B_R(z)} |G(z,z')g(z')|\,dz' + \int_{\Omega \setminus B_R(z)} |G(z,z')g(z')|\,dz'.
\end{equation}
In the ball $B_R(z)$, we bound $g$ by its $L^\infty$ norm; the integral can then be estimated directly:
\begin{align*}
  \int_{\Omega \cap B_R(z)} |G(z,z')g(z')|\,dz' 
  &\leq  C \|g\|_{L^\infty(\Omega)} \int_0^R (1 + |\log r| ) r \, dr\\
  & \lesssim R^2 \|g\|_{L^\infty(\Omega)} (|\log R| +1) \\
  & \lesssim \ep^{1+q}\ep^{1-q} |\log \ep|
  \lesssim \ep^{2} |\log \ep| .
\end{align*} 

Outside $B_R(z)$, we use \eqref{eq:aux lemma:G bounds 1} to bound $|G(z,z')|$ by $C(1+|\log R|)\lesssim |\log \ep|$, and hence
\begin{align*}
\int_{\Omega \setminus B_R(z)} |G(z,z')g(z')|\,dz'
&\leq  C|\log \ep| \int_{\Omega \setminus B_R(z)}| g(z')|\,dz' \\
&\leq C |\log \ep| \|g\|_{L^1(\Omega)}
\leq C \ep^{2} |\log \ep|.
\end{align*} 
Thus, we have established \eqref{eq control psi}.  

\end{proof}

\subsection{Technical lemmas}

Recall that for  measurable domains $X \subseteq \mathbb{R}^m$ and $Y \subseteq \mathbb{R}^n$,  the Bochner spaces   $L^p(X; W^{s,p}(Y))$ are defined by
$$
\|f\|_{L^p(X; W^{s,p}(Y))} = \left( \int_X \|f(x, \cdot)\|_{W^{s,p}(Y)}^p \, dx \right)^{1/p}.
$$

We will use the following slicing regularity of  Sobolev functions in the Slobodeckij/Gagliardo scale for $1\le p <\infty $.

\begin{lemma}\label{lemma:Sobolev_slicing}
Let $I$ be an interval and $\Omega = \mathbb{R} \times I$. Then for any $1 \le  p< \infty$ and $s \in [0, \infty)$, there holds
$$ \| f \|_{L^p_x(\mathbb{R}; W^{s,p}_y(I))}  + \| f \|_{L^p_y(I; W^{s,p}_x(\mathbb{R}))}  \lesssim \| f \|_{W^{s,p}(\Omega)}. 
$$
\end{lemma}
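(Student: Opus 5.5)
The plan is to reduce to one-dimensional slicing of the Gagliardo seminorm, treating the integer and fractional parts of $s$ separately. Since $\|\cdot\|_{W^{s,p}}$ is the sum of integer-order derivatives in $L^p$ and the seminorm of the top-order derivatives, it suffices to treat two cases. For $s=k\in\N$ the estimate is immediate from Fubini: for a.e. $x$, the slice $f(x,\cdot)$ has weak derivatives $\p_y^j f(x,\cdot)$ for $j\le k$, and $\int_\R \|\p_y^j f(x,\cdot)\|_{L^p(I)}^p\,dx=\|\p_y^j f\|_{L^p(\Om)}^p$. The same holds with the roles of $x$ and $y$ exchanged. (One can justify slicing of weak derivatives by density of smooth functions, since $\Om$ is a Lipschitz domain and restriction and approximation commute with Fubini.)

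For $s=k+\sigma$ with $\sigma\in(0,1)$, I apply the integer case to $g=\p_y^k f$ (respectively $\p_x^k f$). The remaining task is then to show
\[
\int_\R [g(x,\cdot)]_{W^{\sigma,p}(I)}^p\,dx \lesssim [g]_{W^{\sigma,p}(\Om)}^p+\|g\|_{L^p(\Om)}^p .
\]
This is the standard fact that the one-dimensional Gagliardo seminorm in a single direction is controlled by the full one, i.e. the equivalence of $W^{\sigma,p}$ on $\R^2$ with $L^p_x W^{\sigma,p}_y\cap L^p_yW^{\sigma,p}_x$.

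The key trick is to average over an intermediate point. For $y,y'\in I$ and $x\in\R$, put $h=|y-y'|$ and let $D=D(x,y,y')$ be a disc-like set of points $z''\in\Om$ of diameter $\sim h$ near $(x,y)$ and $(x,y')$. Because $I$ is an interval of positive length, one can take $|D|\gtrsim h^2$ whenever $h\le |I|$, choosing $D\subset\Om$ by moving inward. Then
\[
|g(x,y)-g(x,y')|^p\lesssim \frac{1}{|D|}\int_D \Big(|g(x,y)-g(z'')|^p+|g(z'')-g(x,y')|^p\Big)dz'' .
\]
Next I divide by $h^{1+\sigma p}$ and integrate in $x,y,y'$. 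For the first term, fix $z=(x,y)$ and $z''$ and integrate over the $y'$ allowed by $z''\in D$; these satisfy $h\gtrsim|z-z''|$. Using $|D|^{-1}\lesssim h^{-2}$, this gives the bound
\[
\int_{h\gtrsim|z-z''|}h^{-3-\sigma p}\,dh\sim |z-z''|^{-2-\sigma p},
\]
which is exactly the two-dimensional Gagliardo kernel with $n=2$. The second term is symmetric.

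Since $I$ is bounded, every $h\le|I|$, so no separate large-distance regime appears in the $y$-slicing. For the $x$-slicing, with $x,x'\in\R$, the pairs with $|x-x'|\ge 1$ (or $\ge |I|$) are handled trivially:
\[
\iint_{|x-x'|\ge1}\frac{|g(x,y)-g(x',y)|^p}{|x-x'|^{1+\sigma p}}\,dx\,dx'\lesssim \|g(\cdot,y)\|_{L^p}^p .
\]
For pairs with $|x-x'|<|I|$, the same averaging over a disc of radius $\sim|x-x'|$ inside $\Om$ applies.

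The main obstacle is the geometric bookkeeping near $\p\Om$, namely ensuring $|D|\gtrsim h^2$ and $D\subset\Om$ uniformly. I would handle it by choosing $D$ as the square of side $h/2$ placed on the side of the segment facing the interior of the strip. That side contains such a square whenever $h\le|I|$, since the two points lie on a vertical (or horizontal) segment of length $h\le |I|$.

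An alternative route for $p=2$ would be via the Fourier transform, but the averaging argument covers all $1\le p<\infty$ uniformly.
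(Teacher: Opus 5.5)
Your proof is correct, but it takes a different route from the paper for the fractional part.

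The two arguments agree on the integer case, which is Fubini. For fractional $s=m+\theta$, the paper argues abstractly. It notes that the identity is bounded $W^{m,p}(\Omega)\to X_m$ and $W^{m+1,p}(\Omega)\to X_{m+1}$, where $X_k=L^p_xW^{k,p}_y\cap L^p_yW^{k,p}_x$. It then applies real interpolation $(\cdot,\cdot)_{\theta,p}$ and identifies the interpolation spaces on the extension domains $\Omega$ and $I$ with the Slobodeckij spaces.

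You instead prove the one-directional seminorm bound by hand, after reducing to $g=\p_y^kf$ or $g=\p_x^kf$ (using $[\p_y^k f]_{W^{\sigma,p}}\le[\nabla^k f]_{W^{\sigma,p}}$). You insert an intermediate point $z''$, averaged over a square $D\subset\Omega$ of side $\sim h$ adjacent to the segment, and use $\int_{h\gtrsim|z-z''|}h^{-3-\sigma p}\,dh\sim|z-z''|^{-2-\sigma p}$ to recover the two-dimensional Gagliardo kernel. In the horizontal slicing, the averaging at large separations would only produce the kernel $|z-z''|^{-1-\sigma p}$. You correctly avoid this by absorbing the tail $|x-x'|\gtrsim 1$ into $\|g(\cdot,y)\|_{L^p}^p$; just fix that threshold at $\min\{1,|I|\}$ (or $|I|$) so the two regimes cover all pairs.

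What each approach buys:
\begin{itemize}
\item Yours is elementary and self-contained. It works directly with the seminorm the paper takes as its definition and gives explicit constants; its only cost is the boundary bookkeeping for $D$, which you handle correctly.
\item The paper's is shorter on the page but relies on heavier facts. These are the identification $(W^{m,p},W^{m+1,p})_{\theta,p}=W^{m+\theta,p}$ on Lipschitz domains and an interpolation identity for intersections of mixed-norm spaces. Of that identity, only the easier embedding into each factor is actually needed, via $(L^p(A_0),L^p(A_1))_{\theta,p}=L^p((A_0,A_1)_{\theta,p})$.
\end{itemize}
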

\begin{proof}
For $k\in \mathbb N$, the estimate is immediate from Fubini:
\[
\|f\|_{L_x^p(\mathbb R;W_y^{k,p}(I))}^p
=\sum_{j=0}^k \|\p_y^j f\|_{L^p(\Omega)}^p
\lesssim \|f\|_{W^{k,p}(\Omega)}^p,
\]
and similarly
\[
\|f\|_{L_y^p(I;W_x^{k,p}(\mathbb R))}^p
=\sum_{j=0}^k \|\p_x^j f\|_{L^p(\Omega)}^p
\lesssim \|f\|_{W^{k,p}(\Omega)}^p.
\]

Now let $s=m+\theta$ with $m\in \mathbb N_0$ and $0<\theta<1$, and set
\[
X_k:=L_x^p(\mathbb R;W_y^{k,p}(I))
\cap L_y^p(I;W_x^{k,p}(\mathbb R)),
\qquad
\|u\|_{X_k}
:=\|u\|_{L_x^pW_y^{k,p}}+\|u\|_{L_y^pW_x^{k,p}}.
\]
The identity map $T f:=f$ is bounded
\[
T:W^{m,p}(\Omega)\to X_m,
\qquad
T:W^{m+1,p}(\Omega)\to X_{m+1}.
\]
Hence, by real interpolation,
\[
T:(W^{m,p}(\Omega),W^{m+1,p}(\Omega))_{\theta,p}
\longrightarrow
(X_m,X_{m+1})_{\theta,p}.
\]
Since $\Omega=\mathbb R\times I$ and $I$ are Lipschitz extension domains, the
standard interpolation identities yield
\[
(W^{m,p}(\Omega),W^{m+1,p}(\Omega))_{\theta,p}=W^{s,p}(\Omega),
\]
and
\[
(X_m,X_{m+1})_{\theta,p}
=
L_x^p(\mathbb R;W_y^{s,p}(I))
\cap
L_y^p(I;W_x^{s,p}(\mathbb R)),
\]
with equivalent norms. Therefore
\[
\| f \|_{L_x^p(\mathbb R; W_y^{s,p}(I))}
+
\| f \|_{L_y^p(I; W_x^{s,p}(\mathbb R))}
\lesssim
\| f \|_{W^{s,p}(\Omega)}.
\qedhere
\]
\end{proof}

\section{Existence of steady solutions}\label{sec:outline}

In this section, we provide the proofs concerning the flexibility of the 2D Euler equations near the Couette flow on the infinite channel $\Omega = \mathbb{R} \times [-1, 1]$.

We address the flexibility aspect by demonstrating how the existence of smooth, compactly supported relative equilibria (Theorem \ref{thm:flex} and Corollary \ref{cor:flex}) can be reduced to a carefully designed variational maximization problem. The rigorous technical construction and the geometric properties of the maximizer for this variational problem are given in Section~\ref{sec:flex}.

To prove the existence of steady solutions near the Couette flow, we adopt a variational method that relies on a penalized energy maximization problem. The core idea is to balance the non-local self-energy against the interaction energy with the background shear and a specific penalty term to achieve smallness in the maximal possible regularity class.

\begin{definition}[\emph{Vorticity function}]
Fix $0<q \le \frac12$. For any $\ep>0$, we say that $f:\mathbb{R}\to\mathbb{R}$ is a vorticity function if $f$ is smooth, non-negative, and satisfies:
\begin{enumerate}[{\rm(H1)}]
    \item \label{H1}
    $f(t)=0$ for $t \le 0$ and $f$ is strictly increasing on $[0,+\infty)$.

    \item \label{H2}
    $f(t)=\dfrac{\ep^{1-q}}{|\log \ep|^2}\,t \qquad \text{for } t\ge 1.$

    \item \label{H3}
    $|f^{(n)}(t)|\le C_n |\log \ep|^{-2} \ep^{1-q}$. \quad   
\end{enumerate}
\end{definition}

Note that $f$ can be Gevrey-(1+$\delta$) for any $\delta>0$ (Gevrey-$1$ corresponding to analytic functions). That is, there exist constants $K_0, K_1$ depending on $\ep$ such that: $$|f^{(m)}(x)| \le K_0 K_1^m (m!)^{1+\delta}.   $$
We remark that such a function indeed exists. For any fixed $\delta>0$, one can choose a Gevrey-$(1+\delta)$ function $g:\R\to\R$ such that
\[
g(t)=0 \quad \text{for } t\le 0,
\qquad
g(t)=t \quad \text{for } t\ge 1,
\]
and
\[
g(t)>0 \quad \text{and} \quad g'(t)>0 \quad \text{for } t>0.
\]
Then, setting
\[
f(t)=\frac{\ep^{1-q}}{|\log \ep|^2}g(t),
\]
we obtain a function $f$ satisfying \ref{H1}--\ref{H3}.

We first isolate the variational input needed for the proof of Theorem \ref{thm:flex}. The detailed construction of the maximizer and the proof of its properties are deferred to Section \ref{sec:flex}.

\begin{proposition}\label{prop:maximizer} 
 Given $0<q\le \frac12$, there exists $\ep_0>0$ such that for any $0<\ep\leq \ep_0$ and any vorticity function $f:\R \to \R $ satisfying the assumptions \ref{H1}--\ref{H3} with $0<\ep\leq \ep_0$, there exists a  non-negative $\om_*  :\Om \to \R^+ $ satisfying all of the following.

\begin{itemize}
    \item 
      $\om_*$ is a solution to the steady equation:
    \begin{equation}\label{eq:prop:maximizer 1}
    \begin{cases}
      \om_* = f( \ep^{-2} \left(\psi_* - y^2/2 -\alpha ) \right)  &\\
      - \Delta \psi_* = \om_*&
    \end{cases}
    \quad \text{for all $z\in \Omega$}
    \end{equation}
    and the positive constant $\alpha>0$ satisfies $\alpha \gtrsim \ep^2 |\log \ep|$.

    \item $ 0\le \om_* \le  \ep^{1- q}   $ and $\|\om_*\|_{L^1} \le \ep^2$.

    \item $\supp (\om_*) \subset  \{(x,y)\in \Om \mid |x|\le C |\log \ep|^{-1}\}\cap  \{(x,y)\in \Om \mid  |y| \le C \ep |\log \ep|^\frac{1}{2} \}$.

\end{itemize}
    
\end{proposition}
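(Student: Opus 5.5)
We will obtain $\om_*$ as a maximizer of a penalized energy over a constrained admissible class. Let $F(s)=\int_0^s f^{-1}(\tau)\,d\tau$ for $s\ge0$. By \ref{H1}--\ref{H2}, $f^{-1}$ is well defined on $[0,\infty)$ and grows linearly with slope $\ep^{-(1-q)}|\log\ep|^2$ beyond $f(1)$. Set
\[
\mathcal E(\om)=\frac12\int_\Om \om\,\mathcal G\om-\frac12\int_\Om y^2\om-\ep^2\int_\Om F(\om),
\]
and maximize over
\[
\mathcal A=\Big\{\om\in L^\infty(\Om):\ 0\le\om\le\ep^{1-q},\ \int_\Om\om=\ep^2,\ \supp\om\subset\{|x|\le L\}\Big\},
\]
with $L$ a fixed large constant. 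The constraint $\supp\om\subset\{|x|\le L\}$ restores compactness, and we will later show it is inactive.

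\textbf{Step 1 (existence).} By Lemma \ref{lemma:G bounds}, $\mathcal E$ is bounded above on $\mathcal A$. Take a maximizing sequence and extract a weak-$*$ limit in $L^\infty$. On the bounded strip, $\mathcal G$ is compact on $L^2$, so the self-energy converges. The potential term is linear, and $-\int F$ is weakly upper semicontinuous because $F$ is convex. The mass constraint passes to the limit since the supports lie in a fixed compact set.

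\textbf{Step 2 (positive energy via a test function).} We need a lower bound $\sup_{\mathcal A}\mathcal E\gtrsim \ep^4|\log\ep|$. Take an anisotropic patch of height $\ep^{1-q}$ with $y$-width $h\sim\ep|\log\ep|^{1/2}$ and $x$-width $\ell=\ep^{1+q}/h$, so that the mass is $\ep^2$. Its self-energy is about $\ep^4|\log \ell|\sim\ep^4|\log\ep|$. Its potential cost is $\int y^2\om\sim\ep^2h^2\sim\ep^4|\log\ep|$, which is smaller than the self-energy provided the constant in $h$ is small. The penalty is at most $\ep^2\cdot\ep^2\cdot f^{-1}(\ep^{1-q})\cdot O(1)$, and we keep it lower order by choosing $h$ and the amplitude appropriately. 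This balance fixes the $|\log\ep|^{1/2}$ scaling of the $y$-support.

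\textbf{Step 3 (Euler--Lagrange equation).} Perturb the maximizer within $\mathcal A$ by mass-preserving variations. This yields a Lagrange multiplier $\alpha$ for the mass constraint and a multiplier for the upper bound. We obtain $\ep^2 f^{-1}(\om)=\psi-y^2/2-\alpha$ on $\{0<\om<\ep^{1-q}\}$, and $\psi-y^2/2\le\alpha$ off the support. Hence $\om=f(\ep^{-2}(\psi-y^2/2-\alpha))$, truncated at $\ep^{1-q}$.

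To remove the truncation, use $\|\psi\|_\infty\lesssim\ep^2|\log\ep|$ from Lemma \ref{lemma:G bounds}. Then the argument of $f$ is $O(|\log\ep|)$, so $f\le \ep^{1-q}|\log\ep|^{-2}\cdot O(|\log\ep|)<\ep^{1-q}$ and the upper constraint is inactive. To get $\alpha\gtrsim\ep^2|\log\ep|$, compare the energy identity with the lower bound from Step 2: $\int\om(\psi-y^2/2)\approx 2\mathcal E_{1}+\ldots$. Alternatively, argue that if $\alpha$ were small, the support would spread and the self-energy would drop below the lower bound.

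\textbf{Step 4 (support bounds).} On $\supp\om$ we have $y^2/2<\psi-\alpha\le\|\psi\|_\infty\lesssim\ep^2|\log\ep|$, which gives $|y|\lesssim\ep|\log\ep|^{1/2}$.

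The $x$-support bound $|\log\ep|^{-1}$ is the main obstacle. A point $z$ in the support needs $\psi(z)>\alpha\gtrsim\ep^2|\log\ep|$. The plan is a refined Green-function estimate: split the mass into the part within distance $r$ of $z$ and the part farther away. The far mass contributes at most $\ep^2(|\log r|+C)$. The near mass, in the thin strip of height $\ep|\log\ep|^{1/2}$, is at most $\ep^{1-q}\cdot r\cdot\ep|\log\ep|^{1/2}$ times a log factor. Combining this with $\alpha\ge c_0\ep^2|\log\ep|$ forces a substantial fraction of the mass to lie within $|\log\ep|^{-1}$-scale distance of $z$.

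Next, translate in $x$ so that the mass is centered. A concentration-compactness / Riesz-rearrangement argument in $x$ (Steiner symmetrization in $x$ preserves $\mathcal A$ and does not decrease $\mathcal E_1$) lets us assume $\om$ is even and decreasing in $|x|$. With this monotonicity, a point at $|x|\gg|\log\ep|^{-1}$ sees too little nearby mass for $\psi$ to exceed $\alpha$. I expect pinning down the constants in this step, so that the bound is $|\log\ep|^{-1}$ rather than some power of $|\log\ep|$, to be the delicate part. Once it holds, the constraint $|x|\le L$ is inactive and the equation holds on all of $\Om$.

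Finally, the $L^1$ bound holds by construction, and smoothness follows from bootstrapping $-\Delta\psi=f(\cdot)$ with $f$ smooth.
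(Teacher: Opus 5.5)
Your architecture matches the paper's: the penalized energy with $F=\int f^{-1}$, mass-preserving variations giving a multiplier $\alpha$, the energy identity for $\alpha$, the bound $y^2\le 2\psi\lesssim\ep^2|\log\ep|$ for the $y$-support, and Steiner symmetry for the $x$-support. Replacing the paper's strip constraint $|y|\le\ep^{1-q}$ by $|x|\le L$ is a harmless way to get compactness. However, Step 2 fails as written. A patch of amplitude $a$ and mass $\ep^2$ has penalty exactly $\ep^4F(a)/a$, independent of its shape, so adjusting $h$ cannot help. For $a=\ep^{1-q}$ you are in the quadratic regime of $F$, with $F(\ep^{1-q})\approx\frac12|\log\ep|^2\ep^{1-q}$. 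The penalty is then $\approx\frac12\ep^4|\log\ep|^2$, which beats the self-energy $\approx\frac{q}{4\pi}\ep^4|\log\ep|$; your own bound $\ep^4f^{-1}(\ep^{1-q})=\ep^4|\log\ep|^2$ already shows this. You need $a\lesssim\ep^{1-q}|\log\ep|^{-2}$, where $F(a)\lesssim a$ and the penalty is $O(\ep^4)$. The paper uses amplitude $\ep^{1-q}/(\pi|\log\ep|^2)$ on an ellipse with semi-axes $\ep^q|\log\ep|^2$ and $\ep$.

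Step 3 is circular as ordered. Removing the saturated set via ``the argument of $f$ is $O(|\log\ep|)$'' requires $\alpha\ge-C\ep^2|\log\ep|$. But your lower bound on $\alpha$ comes from the energy identity, which needs the equation with equality on the whole support. On $\{\om=\ep^{1-q}\}$ the KKT condition is only $\psi-y^2/2-\alpha\ge\ep^2F'(\ep^{1-q})$, which bounds $\alpha$ from above. You can break the circle with an a priori bound. Since $\int\om=\ep^2$, the set $\{|x|\le L,\ |y|\le\ep,\ \om<\ep^{1-q}|\log\ep|^{-2}\}$ has positive measure, and there $F'(\om)\le1$, so $\alpha\ge H\ge-\frac32\ep^2$. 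The paper gets the same effect from the mass-transfer comparison in Lemma~\ref{lemma:omega upperbound}.

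Even then, the identity reads $\alpha\ep^2=2\mathcal E(\om)+\frac12\int y^2\om-\ep^2\int(\om F'(\om)-2F(\om))$, so you must prove $\int(\om F'(\om)-2F(\om))\lesssim\ep^2$. This is exactly where the linear form of $f$ for $t\ge1$ enters: $sF'(s)-2F(s)\le s$ below the threshold $\ep^{1-q}|\log\ep|^{-2}$ and is $\lesssim\ep^{1-q}|\log\ep|^{-2}$ above it, while the set above it has measure $\le\ep^{1+q}|\log\ep|^2$. Your ``$\approx 2\mathcal E_1+\ldots$'' hides this term.

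Finally, the $x$-support step you leave open is not delicate. For Steiner-symmetric $\om$, $m(x)=\int\om(x,y)\,dy$ is non-increasing in $|x|$, so $m(x)\le\ep^2/|x|$. Splitting the $x'$-integral at $|x'|=|x|/2$ then gives $\psi(z)\lesssim\ep^2/|x|$ (Lemma~\ref{lemma:psi decay by steiner}). Combined with $\psi\ge\alpha\gtrsim\ep^2|\log\ep|$ on the support, this gives $|x|\lesssim|\log\ep|^{-1}$ immediately, with no concentration-compactness and no loss of log powers. The same bound gives $\psi-y^2/2<\alpha$ for $|x|\ge L$, which is what actually makes your $x$-constraint inactive.
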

Assuming Proposition \ref{prop:maximizer}, we now derive the steady flexibility result. The traveling-wave version is obtained by the same argument after shifting the background potential; this modification is described in Section \ref{sec:sketch traveling}.

\begin{proof}[Proof of Theorem \ref{thm:flex} assuming Proposition \ref{prop:maximizer}]

Given $\delta>0$ as in Theorem \ref{thm:flex}, we fix $q=\frac{\delta}{4}$ and assume without loss of generality that $\delta \le \frac{1}{100}$. Then we choose a vorticity function $f$ satisfying \ref{H1}--\ref{H3} which is also Gevrey-$(1+q)$.

For this $q>0$ and this $f$, we apply Proposition \ref{prop:maximizer} and obtain for all $0<\ep\le \ep_0$ sufficiently small, a non-negative profile $\om_*$.

Writing $\om:=-\om_*$, it follows from \eqref{eq:prop:maximizer 1} that $\om:\Om \to \R$ is non-positive and satisfies
\begin{equation}\label{eq:proof-thm-flex-EL}
\om  = -f\!\left(\ep^{-2}\bigl(-\psi -y^2/2-\alpha\bigr)\right),
\qquad
-\Delta \psi =\om .
\end{equation}
Note that $u(\om)= \nabla^\perp \psi $, a direct calculation gives
\[
y\p_x\om+u^x(\om)\p_x\om+u^y(\om)\p_y\om=0,
\]
namely $\om$ is a steady solution to \eqref{eq:eu_eq couette}.

Next, we verify the derivative estimates for $\om$. By Proposition \ref{prop:maximizer},
\[
 |\om|\le \ep^{1-q},
\qquad
\|\om\|_{L^1}\le \ep^2
\]
and (set $L:=|\log \ep|$ for simplicity)
\[
\supp (\om)
\subset
\Bigl\{|x|\le C L^{-1}\Bigr\}\cap
\Bigl\{|y|\le C\ep L^{1/2}\Bigr\}.
\]
Let us fix two compact sets $K_0$ and $K_1$ independent of $0<\ep \le \ep_0$,  such that
\[
\supp (\om) \subset K_0 \subset K_1 \subset \Omega,
\qquad
\dist(K_0,\partial K_1)>0 .
\]

\medskip
\noindent
\textbf{Step 1. First and second order derivatives}

For $z\in \supp(\om)$, differentiating \eqref{eq:proof-thm-flex-EL} gives
\begin{equation}\label{eq:proof-thm-flex-grad-omega}
|\nabla \om(z)|
\le
\ep^{-2}\bigl(|\nabla \psi(z)|+|y|\bigr)\,
f'\!\left(\ep^{-2}\bigl(-\psi-y^2/2-\alpha\bigr)\right).
\end{equation}
Using the standard velocity estimate
\begin{equation}\label{eq:proof-thm-flex-grad-psi}
\|\nabla \psi\|_{L^\infty}
\lesssim
\|\om\|_{L^1}^{1/2}\|\om\|_{L^\infty}^{1/2}
\lesssim
\ep^{\frac{3-q}{2}}
\ll \ep,
\end{equation}
together with $|y|\lesssim \ep L^{1/2}$ on $\supp (\om)$ and \ref{H3}, we obtain
\begin{equation}\label{eq:proof-thm-flex-grad-omega-rough}
\|\nabla \om\|_{L^\infty}
\lesssim
\ep^{-2}\cdot \ep L^{1/2}\cdot \bigl(\ep^{1-q}L^{-2}\bigr)
=
\ep^{-q}L^{-3/2}.
\end{equation}
Next, Lemma \ref{lemma:G bounds} gives
\begin{equation}\label{eq:proof-thm-flex-psi-linf}
\|\psi\|_{L^\infty(\Omega)}
\lesssim
\ep^2 L.
\end{equation}
Then by interpolation \footnote{One can estimate the $C^{0,\gamma}$ norm for any fixed Hölder exponent
$0<\gamma \le 1-q$.
Here we take $\gamma=q$ for notational simplicity.} between \eqref{eq:proof-thm-flex-grad-omega-rough} and $\|\om\|_{L^\infty}\le \ep^{1-q}$,
\[
\|\om\|_{C^{0,q}(K_1)}
\lesssim
\|\om\|_{L^\infty}^{1-q}\|\nabla \om\|_{L^\infty}^{q}+\|\om\|_{L^{\infty}}
\lesssim
\ep^{1-2q}L.
\]
Hence the interior Schauder estimate for $-\Delta \psi=\om$ yields
\begin{equation}\label{eq:proof-thm-flex-C2psi}
\|\psi\|_{C^{2,q}(K_0)}
\lesssim
\|\om\|_{C^{0,q}(K_1)}+\|\psi\|_{L^\infty(K_1)}
\lesssim
\ep^{1-2q}L.
\end{equation}

We can now estimate the second derivative of $\om$. Differentiating \eqref{eq:proof-thm-flex-EL} once more gives
\begin{align}
|\nabla^2 \om(z)|
&\lesssim
\ep^{-2}\bigl(|\nabla^2\psi(z)|+1\bigr)
f'\!\left(\ep^{-2}\bigl(-\psi-y^2/2-\alpha\bigr)\right)
\notag\\
&\qquad
+\ep^{-4}\bigl(|\nabla\psi(z)|+|y|\bigr)^2
f''\!\left(\ep^{-2}\bigl(-\psi-y^2/2-\alpha\bigr)\right).
\label{eq:proof-thm-flex-hessian-omega}
\end{align}
Using \eqref{eq:proof-thm-flex-grad-psi}, \eqref{eq:proof-thm-flex-C2psi}, the support bound on $y$, and \ref{H3}, we get
\begin{equation}\label{eq:proof-thm-flex-hessian-omega-rough}
\|\nabla^2 \om\|_{L^\infty}
\lesssim
\ep^{-1-q}L^{-1}.
\end{equation}

\medskip
\noindent
\textbf{Step 2. Higher order derivatives}

We now prove the high-order bounds. The key claim is the following.

\medskip
\noindent\textbf{Claim.}
For every integer $k\ge 0$, there exists $C_k \ge 1$   depending only on $k$, $q$, and $\ep_0$, such that
\begin{equation}\label{eq:proof-thm-flex-claim-log}
\|\nabla^k \om\|_{L^\infty}
\le
C_k\, \ep^{1-q-k} L^{C_k}
\qquad
\text{for all }0<\ep\le \ep_0.
\end{equation}

\medskip
\noindent\emph{Proof of the claim.}
We argue by induction on $k$. Throughout the proof of the claim, $C_k \ge  1$   denotes positive constants depending on the fixed parameters $q,k$ and may change from line to line.

For $k=0$, \eqref{eq:proof-thm-flex-claim-log} is exactly the bound
$
\|\om\|_{L^\infty}\le \ep^{1-q}.
$
For $k=1$, it follows from \eqref{eq:proof-thm-flex-grad-omega-rough}.
For $k=2$, it follows from \eqref{eq:proof-thm-flex-hessian-omega-rough}.
Thus the claim holds for $k=0,1,2$.

Assume now that \eqref{eq:proof-thm-flex-claim-log} holds for all integers $0\le j\le k$, where $k\ge 2$. We prove it for $k+1$.

First, for every $0\le j\le k-1$, the standard interpolation inequality gives
\[
[\nabla^j \om]_{C^q(K_1)}
\lesssim
\|\nabla^j \om\|_{L^\infty}^{1-q}
\|\nabla^{j+1}\om\|_{L^\infty}^{q}.
\]
Using the induction hypothesis, we obtain
\begin{equation}\label{eq:proof-thm-flex-Cjg-omega}
\|\om\|_{C^{j,q}(K_1)}
\le
C_{k}\,\ep^{1-2q-j}L^{ C_k}
\qquad
(0\le j\le k-1).
\end{equation}

Applying the interior Schauder estimate to $-\Delta\psi=\om$, together with \eqref{eq:proof-thm-flex-psi-linf}, we infer that for every $0\le j\le k-1$,
\begin{equation}\label{eq:proof-thm-flex-Cjg-psi}
\|\psi\|_{C^{j+2,q}(K_0)}
\le
C_{k}
\Bigl(
\|\om\|_{C^{j,q}(K_1)}
+\|\psi\|_{L^\infty(\Omega)}
\Bigr)
\le
C_k\,\ep^{1-2q-j}L^{ C_k}.
\end{equation}

Set $\varphi:=\ep^{-2}\bigl(-\psi-y^2/2-\alpha\bigr)$
so that $\om=-f(\varphi)$. We now convert these estimates into bounds for derivatives of $\varphi$.
From \eqref{eq:proof-thm-flex-grad-psi} and the support bound on $y$,
\begin{equation}\label{eq:proof-thm-flex-U1}
\|\nabla \varphi\|_{L^\infty(\supp(\om))}
\lesssim
\ep^{-2}\bigl(\|\nabla\psi\|_{L^\infty}+\|y\|_{L^\infty(\supp(\om))}\bigr)
\lesssim
\ep^{-1}L^{1/2}.
\end{equation}
From \eqref{eq:proof-thm-flex-C2psi},
\begin{equation}\label{eq:proof-thm-flex-U2}
\|\nabla^2 \varphi\|_{L^\infty(\supp(\om))}
\lesssim
\ep^{-2}\bigl(\|\nabla^2\psi\|_{L^\infty(K_0)}+1\bigr)
\lesssim
\ep^{-2}.
\end{equation}
Finally, for every integer $3\le \ell\le k+1$, taking $j=\ell-2$ in \eqref{eq:proof-thm-flex-Cjg-psi} gives
\begin{equation}\label{eq:proof-thm-flex-Ul}
\|\nabla^\ell \varphi\|_{L^\infty(\supp(\om))}
\lesssim
\ep^{-2}\|\nabla^\ell \psi\|_{L^\infty(K_0)}
\le
A_{k}\,\ep^{1-2q-\ell}L^{ C_k}.
\end{equation}

Since $\om=-f(\varphi)$, Faà di Bruno's formula implies that each component of $\nabla^{k+1}\om$ is a finite sum of terms of the form
\begin{equation}\label{eq:proof-thm-flex-fdb-term}
f^{(m)}(\varphi)\prod_{r=1}^{m}\nabla^{\lambda_r}\varphi,
\qquad
m\ge 1,\quad
\lambda_r\ge 1,\quad
\lambda_1+\cdots+\lambda_m=k+1.
\end{equation}
Let $a$ be the number of indices $r$ with $|\lambda_r|=1$ and $b$ be the number of indices with $|\lambda_r|=2$. Then
\[
a+2b+\sum_{|\lambda_r|\ge 3}|\lambda_r|=k+1.
\]
Using \ref{H3}, \eqref{eq:proof-thm-flex-U1}, \eqref{eq:proof-thm-flex-U2}, and \eqref{eq:proof-thm-flex-Ul}, the term \eqref{eq:proof-thm-flex-fdb-term} is bounded by
\begin{align*}
&\lesssim
\bigl(\ep^{1-q}L^{-2}\bigr)
\cdot
\bigl(\ep^{-1}L^{1/2}\bigr)^a
\cdot
\bigl(\ep^{-2}\bigr)^b
\cdot
\prod_{|\lambda_r|\ge 3}
\bigl(\ep^{1-2q-|\lambda_r|}L^{C_k}\bigr)
\\
&\lesssim
\ep^{\,1-q-a-2b+\sum_{|\lambda_r|\ge 3}(1-2q-|\lambda_r|)}
L^{C_k}
\\
&\le
C_k\ep^{\,1-q-(k+1)+c(1-2q)}
L^{C_k},
\end{align*}
where $c$ is the number of indices with $|\lambda_r|\ge 3$.
Since $q \le \frac12$, we have $1-2q \ge0$, which yields
\[
1-q-(k+1)+c(1-2q)\ge 1-q-(k+1).
\]
Hence every term in \eqref{eq:proof-thm-flex-fdb-term} is bounded by
\[
C_k\,\ep^{1-q-(k+1)}L^{C_k}.
\]
Summing over all such terms, for some large constant $C_{k+1}\ge  1$ independent of $0<\ep \le \ep_0$, we obtain
\[
\|\nabla^{k+1}\om\|_{L^\infty}
\le
C_{k+1}\,\ep^{1-q-(k+1)}L^{C_{k+1}}.
\]
This closes the induction and proves the claim.
\hfill$\square$

\medskip

Since $q>0$ is fixed and $0<\ep\le \ep_0$, for each $k$ there exists a constant $C_{k,q,\ep_0}>0$ such that we can absorb the logarithmic factors:
\[
L^{C_k}\le \frac{C_{k,q,\ep_0}}{C_k}\ep^{-q}.
\]
Therefore the claim implies
\begin{equation}\label{eq:proof-thm-flex-final-Linf}
\|\nabla^k \om\|_{L^\infty}
\le
C_{k,q,\ep_0}\,\ep^{1-2q-k}
\qquad
\text{for every integer }k\ge 0.
\end{equation}

\medskip
\noindent
\textbf{Step 3. Sobolev and Gevrey regularity}

We next estimate the Sobolev norms. By the support bound from Proposition \ref{prop:maximizer},
\[
|\supp (\om)|
\lesssim
\ep,
\]
hence for every integer $k\ge 0$ and every $1\le p\le \infty$,
\begin{align}
\|\om\|_{W^{k,p}}
&\lesssim
|\supp (\om)|^{1/p}\sum_{j=0}^k \|\nabla^j\om\|_{L^\infty}
\notag\\
&\lesssim_{k,p,q,\ep_0}
\ep^{1/p}\,\ep^{1-2q-k}
=
\ep^{1-k+\frac1p-2q}.
\label{eq:proof-thm-flex-Wkp}
\end{align}
By interpolation, the same estimate holds for non-integer $s\ge 0$:
\begin{equation}\label{eq:proof-thm-flex-Wsp}
\|\om\|_{W^{s,p}}
\lesssim_{s,p,q,\ep_0}
\ep^{1-s+\frac1p-2q}.
\end{equation}
In particular, for
\[
s=1+\frac1p-\delta,
\]
we obtain
\[
\|\om\|_{W^{s,p}}
\lesssim
\ep^{\delta-2q}
=
\ep^{\delta/2}
\ll 1,
\]
since $q=\delta/4$.

\medskip

Finally, we prove the Gevrey regularity of $\om$.
Recall that  
\[
 \varphi =\ep^{-2}\bigl(-\psi-y^2/2-\alpha\bigr) , 
\]
and by \eqref{eq:proof-thm-flex-EL}  for some   Gevrey-$(1+q)$ function $h$ on $\R$ we have
\[
-\Delta \varphi=h(\varphi) \qquad \text{in $\Omega$}.
\]
By the classical local Gevrey regularity theorem for nonlinear elliptic equations
due to Friedman
\cite{Friedman58,Tri_03}, it follows that $\varphi \in G^{1+q}_{\mathrm{loc}}(\Omega)$. In particular, since $q=\delta/4<\delta$, $\varphi$ is also locally Gevrey-$(1+\delta)$. Therefore,
\[
\ep^2\Delta \varphi+1=\om \in G^{1+q}(\Omega)
\]
since $\om$ has compact support.  

\end{proof}

\section{Variational Construction of the Smooth Steady State}\label{sec:flex}

This section is devoted to the rigorous construction of the smooth steady state required to prove Proposition \ref{prop:maximizer}. We achieve this by maximizing a penalized energy functional over a highly  anisotropic admissible class. Before entering the construction, we give a concise road-map of this section:
\begin{itemize}
    \item We first introduce the variational problem, formulating it as a maximization problem of a penalized energy \eqref{eq:def_penalized energy} in a well-chosen admissible class  \eqref{eq:def_admissible_class};

    \item Then we show the existence (but not uniqueness) of a maximizer $\om_*$ by compactness arguments.  

    \item Next, we show any choice of a maximizer $\om_*$ leads to the desired Euler--Lagrange equation that gives rise to the solution claimed in Proposition \ref{prop:maximizer};

    \item Lastly, we derive useful properties and estimates for the maximizer $\om_*$, thereby proving Proposition \ref{prop:maximizer}.

\end{itemize}

\subsection{The penalized energy and the admissible class}\label{subsec:def_F and A}

Let $f$ be a smooth, non-negative function defined on $\mathbb{R}$ as in  Proposition \ref{prop:maximizer}. To define the penalty term in the variational functional, let $F'$ be the inverse function of $f$ on $[0,+\infty)$ and define
\[
F(x):=\int_0^x F'(s)\,ds.
\]

We seek a non-negative profile $\omega \ge 0$ on the domain $\Omega = \mathbb{R} \times [-1, 1]$ so  that the actual Euler solution will be $-\om$ (note the difference compared to \eqref{eq:E_max_intro}).

The total penalized energy functional is defined as the sum of the non-local self-energy, the interaction energy with the background shear, and the penalty energy:
\begin{equation}\label{eq:def_penalized energy}
\begin{split}
    \mathcal E(\omega)=&  \frac{1}{2} \int_{\Omega }  \mathcal G\omega(z)  \omega(z )\, dz 
     - \frac12 \int_{\Omega } y^2 \omega(z) dz- \ep^{2}\int_{\Omega } F(\omega(z)) dz \\ 
    :=&\mathcal E_1+\mathcal E_2+\mathcal E_3  ,
\end{split}\end{equation}
where $\mathcal G \om(z):= \int_{\Omega } G(z,z') \om(z') \, dz' $.

To capture the anisotropic scaling required to stay below the $W^{1+1/p,p}$ rigidity threshold, we maximize this functional over a specific  admissible class. Fix  $q>0$  small. For any $\ep>0$ we define
\begin{equation}\label{eq:def_admissible_class}
\mathcal A:=\left\{  0\le \omega \le  \ep^{1-q}, \; \int_{\Om}  \omega(z)  dz \leq \ep^{2}, \; \supp(\om ) \subset \{|y| \le \ep^{1-q} \}\right\}. 
\end{equation}

The maximization problem can be formulated as follows:
\begin{problem}\label{problem:maximization_steady}
For all sufficiently small $\ep$, find  $\om_* \in \mathcal{A}     $ such that $\om_* \not \equiv 0$ and $ \mathcal{E}(\om_*) = \sup_{ \om \in \mathcal{A} }  \mathcal{E}(\om )$.
\end{problem}

\begin{remark}
We explain some heuristics behind the design of the penalized energy $\mathcal{E}$ and the admissible class $\mathcal{A}$:
\begin{itemize}
    \item     The specific penalty term $\int F(\om)$ is designed to force the maximizer to have a small $L^\infty$ norm, otherwise, the self-energy tends to concentrate the solution, leading to large Sobolev norms. 
    
    \item In the small $L^\infty$ norm regime, we can eventually show that the $L^\infty$ constraint in $\mathcal{A}$ is never reached, thereby forcing the maximizer to have a smooth profile (in contrast to a patch profile).

 \item  Because of the small $L^\infty$ norm, we can also prove the $y$-support constraint is vacuous, and thus the maximizer satisfies the global Euler--Lagrange equation on $\Omega$.

\end{itemize}
\end{remark}
\subsection{Existence of the maximizer}

To prove that the supremum of $\mathcal{E}$ is attained in $\mathcal{A}$, we first ensure that the maximum energy is strictly bounded away from zero. 
 
The following lemma summarizes some basic properties of $F$.
\begin{lemma}\label{lemma:construction of F}
The function $F$ defined above satisfies the following properties:
\begin{enumerate}
    \item \label{item:F1 bijection}  $F': [0,+\infty) \to [0,+\infty)$ is  bijective with $F''(s) >   0$.
    \item \label{item:F2 sF'-2F}  $sF'(s)-2F(s) \lesssim \frac{\ep^{1-q}}{|\log\ep|^2}$.
    \item \label{item:F3  Fle s} $F(s)\lesssim s$ for $0 \le s \le \frac{\ep^{1-q}}{|\log \ep|^2}$.
    \item \label{item:F4  F' ge log}    $F'(s) = |\log \ep|^2\ep^{q-1}s$ when $s \ge  \frac{ \ep^{1-q}}{  |\log \ep|^2}$.
\end{enumerate}  
All implicit constants above are independent of $\ep$.
\end{lemma}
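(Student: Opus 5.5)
The plan is to read off every property directly from the structure of $f$ given in \ref{H1}--\ref{H3}, using that $F'=f^{-1}$ on $[0,\infty)$. Set $\beta:=\ep^{1-q}|\log\ep|^{-2}$, so that $f(t)=\beta t$ for $t\ge1$ and $|f^{(n)}|\le C_n\beta$.

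\emph{Item \eqref{item:F1 bijection}.} By \ref{H1}, $f(0)=0$ and $f$ is strictly increasing on $[0,\infty)$. By \ref{H2}, $f(t)\to\infty$. Hence $f:[0,\infty)\to[0,\infty)$ is a bijection, and so is its inverse $F'$. For $F''>0$, we use $F''(s)=1/f'(F'(s))$. The point to check is that $f'>0$ on $(0,\infty)$; this is part of the choice $f=\beta g$ with $g'>0$ on $t>0$, and is what we assume. At $s=0$, $f'(0)$ may vanish, since $f$ is smooth and $f\equiv0$ for $t\le0$, so $F''(0)=+\infty$. I will read ``$F''>0$'' on $(0,\infty)$, with $F'$ strictly increasing; this is the only delicate point, and it is just a matter of interpretation.

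\emph{Item \eqref{item:F4  F' ge log}.} For $s\ge\beta=f(1)$, monotonicity gives $F'(s)\ge1$. On that range $f(t)=\beta t$, so $F'(s)=s/\beta=|\log\ep|^2\ep^{q-1}s$.

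\emph{Item \eqref{item:F3  Fle s}.} For $0\le s\le\beta$ we have $F'(s)\le F'(\beta)=1$. Hence $F(s)=\int_0^sF'\le s$.

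\emph{Item \eqref{item:F2 sF'-2F}.} Let $H(s):=sF'(s)-2F(s)$, and split into the two ranges of $s$.

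For $s\le\beta$, we have $0\le F'(s)\le1$ and $F\ge0$, so $H(s)\le s\le\beta$.

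For $s\ge\beta$, write
\[F(s)=F(\beta)+\int_\beta^s\frac{\sigma}{\beta}\,d\sigma=F(\beta)+\frac{s^2-\beta^2}{2\beta},\]
using item \eqref{item:F4  F' ge log}. Then
\[H(s)=\frac{s^2}{\beta}-2F(\beta)-\frac{s^2-\beta^2}{\beta}=\beta-2F(\beta)\le\beta.\]

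In both cases $H\lesssim\beta$, with constant $1$, independent of $\ep$.

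No step is a real obstacle. The only care needed is at $s=0$ in item \eqref{item:F1 bijection}, where $f'(0)=0$ forces $F''$ to blow up; there I interpret the claim as strict convexity, with $F''>0$ on $(0,\infty)$.
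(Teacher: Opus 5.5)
Your proposal is correct and follows the paper's proof essentially step for step. It uses $F''=1/f'(F')$ for (1), the bound $F'\le F'(\beta)=1$ on $[0,\beta]$ for (2) and (3), and the explicit formula $F(s)=F(\beta)+\frac{s^2-\beta^2}{2\beta}$ on $[\beta,\infty)$ for (2) and (4), exactly as the paper does. Your exact identity $sF'(s)-2F(s)=\beta-2F(\beta)\le\beta$ for $s\ge\beta$ is in fact cleaner than the paper's closing step, which bounds $F(s)$ from above, the wrong direction for an upper bound on $sF'-2F$. Your remark that $F''(0)=+\infty$, because $f'(0)=0$, is also a fair reading of item (1).
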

\begin{proof}
    The definition of $F'$ implies $F''(s)=\frac{1}{f'(F'(s))}> 0$. Next we prove $(2)$.

    When $0 \le s \le \frac{ \ep^{1-q}}{|\log \ep|^2}$, we have
    $$
    0 \le F'(s) \le F'\left(\frac{ \ep^{1-q}}{|\log \ep|^2}\right) = 1
    $$
    since $f(1) =\frac{ \ep^{1-q}}{|\log \ep|^2}$ and $F'$ is the inverse function of $f$. Thus, when $s \le \frac{ \ep^{1-q}}{|\log\ep|^2}$, we have
    $$
   sF'(s)- 2F(s) \le sF'(s) \lesssim s \lesssim \frac{\ep^{1-q}}{|\log\ep|^2}.
    $$

    Now we consider the regime $\frac{  \ep^{1-q}}{|\log \ep|^2} \le s $. On one hand, we have by  definition 
    $$
    sF'(s)=|\log \ep|^2\ep^{q-1}s^2 \qquad \text{when }\frac{  \ep^{1-q}}{|\log \ep|^2} \le s .
    $$
    On the other hand, we can compute and bound $F(s)$ explicitly in this regime:
    \begin{align*}
      F(s)&=F\left(\frac{ \ep^{1-q}}{|\log\ep|^2}\right)+\int_{\frac{ \ep^{1-q}}{|\log\ep|^2}}^s  |\log \ep|^2\ep^{q-1}x \,dx \\
      &=F\left(\frac{ \ep^{1-q}}{|\log\ep|^2}\right)+\frac{|\log\ep|^2\ep^{q-1}s^2}{2} -\frac{\ep^{1-q}}{2|\log \ep|^2} \\
      &\lesssim \frac{|\log\ep|^2\ep^{q-1}s^2}{2}+\frac{\ep^{1-q}}{|\log \ep|^2},
    \end{align*}
    where we have used the fact $F(s) \le sF'(s) \lesssim \frac{\ep^{1-q}}{|\log \ep|^2}$ when $s=\frac{ \ep^{1-q}}{|\log \ep|^2}$ since $F'\left(\frac{ \ep^{1-q}}{|\log \ep|^2}\right) = 1$. Gathering the estimate above, we see that $sF'(s)-2F(s) \lesssim \frac{\ep^{1-q}}{|\log \ep|^2}$.

For $0 \le s \le \frac{\ep^{1-q}}{|\log \ep|^2}$, we have 
$$F(s) \le sF'(s) \le sF'\left(\frac{ \ep^{1-q}}{|\log \ep|^2}\right) =  s,$$ 
which proves $(3)$.
    Finally, $(4)$ follows directly from definition of $F$.  
\end{proof}

\begin{lemma}\label{lemma:positive energy}
Given $\frac12 \ge q>0$, then for any $f$ satisfying \ref{H1}--\ref{H3}, there exists $\ep_0>0$ such that for all $0<\ep\le \ep_0$,
\[
0<\sup_{\omega\in\mathcal A}\mathcal E(\omega)<+\infty.
\]
In particular, any maximizer, once its existence is established, is nontrivial.
\end{lemma}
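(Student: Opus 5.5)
Finiteness is immediate from the preceding lemmas, so the work lies in the lower bound, which I will get from an explicit anisotropic test function.

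\textbf{Upper bound.} For $\om\in\mathcal A$ we have $\om\ge0$ and $F\ge0$ (since $F'\ge0$ on $[0,\infty)$), so $\mathcal E_2,\mathcal E_3\le 0$. Hence $\mathcal E(\om)\le \mathcal E_1(\om)=\frac12\int\om\,\mathcal G\om$. Lemma \ref{lemma:G bounds} applies with $g=\om$, because $\|\om\|_{L^1}\le\ep^2$ and $\|\om\|_{L^\infty}\le\ep^{1-q}$. It gives $\|\mathcal G\om\|_{L^\infty}\lesssim\ep^2|\log\ep|$, and therefore $\mathcal E_1\lesssim \ep^4|\log\ep|$ uniformly on $\mathcal A$.

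\textbf{Lower bound.} I take a constant-amplitude anisotropic patch
\[
\om_0=a\,1_{R},\qquad R=[-\ell,\ell]\times[-h,h],\qquad a=\frac{\ep^{1-q}}{|\log\ep|^2},
\]
with $4a\ell h=\ep^2$ so that the mass is exactly $\ep^2$. The parameters will be tuned at the end; a natural choice is $\ell\sim|\log\ep|^{-1}$ and $h=\ep^2/(4a\ell)\sim\ep^{1+q}|\log\ep|^3$. This $h$ is well below $\ep^{1-q}$, so $\om_0\in\mathcal A$.

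The three terms are then estimated separately.
\begin{itemize}
\item Penalty: by Lemma \ref{lemma:construction of F}(\ref{item:F3  Fle s}), $F(a)\lesssim a$, so $|\mathcal E_3|\lesssim\ep^2\cdot\ep^2=\ep^4$.
\item Interaction: $|\mathcal E_2|\le \frac12 h^2\ep^2\sim\ep^{4+2q}|\log\ep|^6\ll\ep^4$.
\item Self-energy: I need a lower bound of the form $\mathcal E_1\gtrsim\ep^4\log(1/\ell)$.
\end{itemize}

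The self-energy estimate is the main step. Near the diagonal, for $|z-z'|\le 2\ell\ll1$ and $z,z'$ near $y=0$, I use $G(z,z')\ge \frac1{2\pi}\log\frac1{|z-z'|}-C$. This follows from the logarithmic singularity and the fact that the regular part of $G$ is bounded near the centreline; I would justify it from the explicit formula for $G$. All points of $R$ satisfy $|z-z'|\le 2\sqrt2\,\ell$, since $h\ll\ell$. Using positivity of $G$,
\[
\int\!\!\int G\,\om_0\om_0\ \ge\ \ep^4\Big(\tfrac1{2\pi}\log\tfrac{1}{3\ell}-C\Big)\ \gtrsim\ \ep^4\log\log\tfrac1\ep .
\]
This beats the $O(\ep^4)$ penalty once $\ep$ is small. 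To make the margin more comfortable I could instead take $\ell$ a small power of $\ep$, but I keep $\ell\sim|\log\ep|^{-1}$ to match the support scale claimed in Proposition \ref{prop:maximizer}.

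The delicate point is therefore only the constant bookkeeping: the self-energy must dominate the penalty, and it does so by an unbounded logarithmic factor. The anisotropy is what keeps $\mathcal E_2$ negligible, since it forces $h\ll\ep$.

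Combining the three terms gives $\mathcal E(\om_0)>0$ for $\ep\le\ep_0$, and hence $\sup_{\mathcal A}\mathcal E>0$. Since $\mathcal E(0)=0$, any maximizer has positive energy and is therefore nontrivial.
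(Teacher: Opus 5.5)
Your proof is correct and is essentially the paper's argument. Both use a constant-amplitude anisotropic patch of mass $\ep^2$ (the paper takes an ellipse with semi-axes $\ep^q|\log\ep|^2$ and $\ep$), show $\mathcal E_2,\mathcal E_3=O(\ep^4)$, let the self-energy win by a logarithmic factor, and get the upper bound from Lemma \ref{lemma:G bounds}.

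One caution about your choice $\ell\sim|\log\ep|^{-1}$: it only gives $\sup\mathcal E\gtrsim\ep^4\log\log(1/\ep)$. Taking $\ell$ a small power of $\ep$, as the paper does, gives $\sup\mathcal E\gtrsim\ep^4|\log\ep|$. That stronger bound is what is used later to get $\alpha\gtrsim\ep^2|\log\ep|$, and from there the support bound $|x|\lesssim|\log\ep|^{-1}$. So ``matching'' the maximizer's support in the test function is the wrong reason for your choice, since that support bound is itself derived from this energy lower bound.
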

\begin{proof}
The upper bound follows directly from \eqref{eq control psi}. Then it remains to find one element $\om $ in $\mathcal{A}$ that gives a positive penalized energy $\mathcal{E} >0$.

By testing the functional with a specific trial function of the form  
$$
\om (z)= \frac{\ep^{1-q}}{\pi |\log \ep|^2} \mathbf{1}_{B_\ep}(z),
$$
where the support $B_\ep$ is a ``thin'' ellipse defined by 
$$
B_\ep = \left\{ (x,y) \in \Omega : \left(\frac{x}{\ep^q|\log \ep|^2}\right)^2 + \left(\frac{y}{\ep}\right)^2 \le 1 \right\}.
$$

We compute the total mass of this trial function. The area of the ellipse $B_\ep$ with semi-axes $a = \ep^q|\log \ep|^2$ and $b = \ep$ is $|B_\ep| = \pi a b = \pi \ep^{1+q} |\log \ep|^2$. Thus, the total mass is:
$$
 \int_{\Omega} \om(z) \, dz = \frac{\ep^{1-q}}{|\log \ep|^2} |B_\ep| =  \ep^2.
$$
So the trial function   belongs to the admissible class $\mathcal{A}$.

We now evaluate the total energy $\mathcal{E}(\om) = \mathcal{E}_1(\om) + \mathcal{E}_2(\om) + \mathcal{E}_3(\om)$ term by term.

\vspace{0.5em}
\noindent
\textbf{Step 1: The interaction energy $\mathcal{E}_2$.}

Using absolute bounds on the $y$-scale, we obtain:
$$
\big| \mathcal{E}_2(\om) \big| \leq   \frac{\ep^2}{2} \int_{\Omega}  \om(z) \, dz  \leq   \ep^4.
$$

\vspace{0.5em}
\noindent
\textbf{Step 2: The penalty energy $\mathcal{E}_3$.}

On the support $B_\ep$, the amplitude is strictly $\om(z) = \frac{\ep^{1-q}}{\pi|\log \ep|^2}$. It follows from Lemma \ref{lemma:construction of F} that $F(\om(z)) \lesssim \om(z)$, which implies
$$
\big| \mathcal{E}_3(\om) \big| =  \ep^2 \int_{\Omega} |F(\om(z)) |\, dz \le  \ep^2 \int_{\Omega} \om(z) \, dz \lesssim  \ep^4.
$$

\vspace{0.5em}
\noindent
\textbf{Step 3: The self-energy $\mathcal{E}_1$.}

The Green function $G(z,z') $ has the local asymptotic expansion 
$$
G(z,z') = -\frac{1}{2\pi}\log|z-z'| + H(z,z'),
$$ where the regular part $H(z,z')$ is uniformly bounded on  $z, z' \in \Om \times \Om$.

The maximum distance between any two points $z, z' \in B_\ep$ is bounded by the major axis of the ellipse. Since $0<q <1 $, for  all sufficiently small $\ep>0$, we have $b = \ep \ll a = \ep^q|\log \ep|^2$, and thus, for all $z, z' \in B_\ep$:
$$
|z - z'| \leq 2 \ep^q|\log \ep|^2 \quad \text{for all $z,z' \in \supp(\om) \times \supp(\om)$}.
$$
Again, for all $\ep$ sufficiently small, this implies $-\frac{1}{2\pi}\log|z-z'| + H(z,z') \ge \frac{q}{4\pi}|\log \ep|$.
Therefore, the self-energy is bounded below by:
\begin{align*}
\mathcal{E}_1(\om)  & = \frac{1}{2} \iint_{B_\ep \times B_\ep} G(z,z') \om(z) \om(z') \, dz dz' \\
& \ge \frac{1}{2} \left( \frac{q}{4\pi}|\log \ep| \right) \left( \int_{B_\ep} \om(z) \, dz \right)^2 \\
& \ge \frac{q}{8\pi}|\log \ep|  \ep^4.
\end{align*}

\vspace{0.5em}
\noindent
\textbf{Step 4: Conclusion.}

Summing the three components $\mathcal{E}_i$, the total penalized energy satisfies:
$$
\mathcal{E}(\om) \ge \frac{q}{8\pi} |\log \ep| \ep^4 - C \ep^4 .   
$$
Because $q > 0$ is fixed, the logarithmic term $|\log \ep|$ diverges to $+\infty$ as $\ep \to 0^+$. Thus, we have
$$
\mathcal{E}(\om) \gtrsim |\log \ep|\ep^{4}-\ep^{4}-\ep^4 \gtrsim |\log \ep|\ep^{4} > 0 \qquad \text{for all sufficiently small $\ep$}.
$$
This guarantees that the supremum of the energy in the admissible class $\mathcal{A}$ is strictly positive, hence the class of maximizers is non-empty.
\end{proof}

\subsection{Steiner Symmetrization}
In   variational constructions, Steiner symmetrization is essential to ensure the compactness of maximizing sequences. 

\begin{definition}
We say a non-negative measurable function $\omega$ defined on $\Omega$ has Steiner symmetry if $\omega(x,y)=\omega(-x,y)$ for almost every $(x,y)\in\Omega$,
and for almost every $y$, the function $x\mapsto \omega(x,y)$ is non-increasing for $x\ge 0$.
\end{definition}

The operation of Steiner symmetrization involves replacing the function $x \mapsto \omega(x, y)$ on each horizontal slice with its symmetric-decreasing rearrangement. We formally record its properties in the following proposition.

\begin{proposition}[Steiner symmetrization]\label{Prop Steiner}  
For any non-negative $\omega $ satisfying $\omega\in L^{1}\cap L^{\infty}(\Omega)$ and $y^2\omega\in L^{1}(\Omega)$, there exists a function $\omega_{s} $ with Steiner symmetry such that
\begin{align*}
&||\omega_{s}||_{L^q}=||\omega||_{L^q},\quad 1\leq q\leq \infty,    \\
&||y^2\omega_{s}||_{L^1}=||y^2\omega||_{L^1},   \ \\
&\mathcal E(\omega_{s})\geq \mathcal E(\omega).  
\end{align*}
\end{proposition}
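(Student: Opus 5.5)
We define $\omega_s$ slice by slice: for a.e. $y\in[-1,1]$, let $\omega_s(\cdot,y)$ be the symmetric decreasing rearrangement (in $x$ on $\R$) of the nonnegative function $x\mapsto\omega(x,y)$. Since $\omega\in L^1\cap L^\infty$, for a.e. $y$ each slice lies in $L^1\cap L^\infty(\R)$, so the rearrangement is well defined. Measurability of $\omega_s$ on $\Omega$ follows from the layer-cake representation: $\{\omega_s>t\}$ is the set of $(x,y)$ with $|x|<\tfrac12|\{x':\omega(x',y)>t\}|$, and the map $y\mapsto|\{\omega(\cdot,y)>t\}|$ is measurable by Fubini. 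By construction $\omega_s$ has Steiner symmetry.

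The equimeasurability claims are then slice-wise statements integrated in $y$. For every $t>0$ and a.e. $y$, the level sets satisfy $|\{\omega_s(\cdot,y)>t\}|=|\{\omega(\cdot,y)>t\}|$. Integrating in $y$ gives $|\{\omega_s>t\}|=|\{\omega>t\}|$, which yields equality of all $L^q$ norms, including $q=\infty$. The weight $y^2$ depends only on $y$, so the slice identity $\int\omega_s(x,y)\,dx=\int\omega(x,y)\,dx$ multiplied by $y^2$ and integrated gives $\|y^2\omega_s\|_{L^1}=\|y^2\omega\|_{L^1}$. The same reasoning applied to the slice identity $\int F(\omega_s(x,y))\,dx=\int F(\omega(x,y))\,dx$ (valid because $F\ge 0$, $F(0)=0$, and $F$ is increasing, by Lemma \ref{lemma:construction of F}) shows $\mathcal E_2(\omega_s)=\mathcal E_2(\omega)$ and $\mathcal E_3(\omega_s)=\mathcal E_3(\omega)$. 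The same equalities also preserve the support constraint in $\mathcal A$, although that is not part of the statement.

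It remains to prove $\mathcal E_1(\omega_s)\ge\mathcal E_1(\omega)$, and this is the main step. Write
\[
\mathcal E_1(\omega)=\frac12\int_{-1}^1\!\int_{-1}^1\!\iint_{\R^2} G(x,y,x',y')\,\omega(x,y)\,\omega(x',y')\,dx\,dx'\,dy\,dy'.
\]
For fixed $y,y'$, the Green function is a function of $x-x'$ only, say $K_{y,y'}(x-x')$, by translation invariance of the channel. From the explicit formula,
\[
K_{y,y'}(\xi)=\frac1{4\pi}\log\frac{\cosh(\pi\xi/2)-\cos(\pi(y+y')/2)}{\cosh(\pi\xi/2)-\cos(\pi(y-y')/2)}.
\]
We will show that $K_{y,y'}$ is even, nonnegative, and non-increasing in $|\xi|$. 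Setting $c=\cosh(\pi\xi/2)\ge1$, $a=\cos(\pi(y+y')/2)$, $b=\cos(\pi(y-y')/2)$, we have $b\ge a$ since $|y-y'|\le|y+y'|$ fails in general. Instead we use $b-a=2\cos(\pi y/2)\cos(\pi y'/2)\ge0$ for $y,y'\in[-1,1]$. The quantity $(c-a)/(c-b)=1+(b-a)/(c-b)$ is then $\ge1$ and decreasing in $c$, hence decreasing in $|\xi|$. So $K_{y,y'}$ is a symmetric decreasing function of $\xi$; when $y=y'$ it has an integrable log singularity at $0$.

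The one-dimensional Riesz rearrangement inequality then gives, for a.e. $(y,y')$,
\[
\iint K_{y,y'}(x-x')\,\omega(x,y)\,\omega(x',y')\,dx\,dx'\le\iint K_{y,y'}(x-x')\,\omega_s(x,y)\,\omega_s(x',y')\,dx\,dx'.
\]
To handle the possibly unbounded kernel, we apply the inequality to the truncations $\min(K_{y,y'},N)$, which are still symmetric decreasing, and pass to the limit by monotone convergence; finiteness is guaranteed by Lemma \ref{lemma:G bounds}-type bounds since $\omega\in L^1\cap L^\infty$. Integrating over $(y,y')$ yields $\mathcal E_1(\omega_s)\ge\mathcal E_1(\omega)$, and summing the three pieces gives $\mathcal E(\omega_s)\ge\mathcal E(\omega)$.

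The only delicate point is verifying the monotonicity of the kernel, which the explicit formula settles. An alternative route, if one preferred to avoid the formula, is the method of images: represent $G$ as an alternating sum of $\R^2$ log kernels, though that sum is less transparently monotone.
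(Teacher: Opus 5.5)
Your route is the paper's: rearrange each horizontal slice symmetrically in $x$, and use equimeasurability on each slice to get the $L^q$ norms, $\mathcal E_2$ and $\mathcal E_3$. For $\mathcal E_1$, the paper only cites Abe--Choi. That inequality rests on the Riesz rearrangement argument you give, which on the channel needs exactly the kernel property you isolate: $K_{y,y'}(\xi)$ is nonnegative, even, and non-increasing in $|\xi|$. So your proposal is a self-contained version of the paper's proof. The one step you call delicate, however, is wrong as written and needs correcting.

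\textbf{The identity is false.} With $a=\cos(\frac{\pi}{2}(y+y'))$ and $b=\cos(\frac{\pi}{2}(y-y'))$ one has $b-a=2\sin(\frac{\pi}{2}y)\sin(\frac{\pi}{2}y')$, not $2\cos(\frac{\pi}{2}y)\cos(\frac{\pi}{2}y')$. This changes sign when $yy'<0$. For such slices the kernel you displayed is negative and increasing in $|\xi|$, so Riesz's inequality would go the wrong way.

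\textbf{The cause is the formula.} The formula you copied from the paper is not the Dirichlet Green function of $\R\times[-1,1]$. It vanishes on $y=0$ and $y=2$, so it is the Green function of $\R\times(0,2)$.

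\textbf{The fix.} Map the channel to the upper half-plane by $w=e^{\frac{\pi}{2}(z+i)}$. This gives
\[
K_{y,y'}(\xi)=\frac1{4\pi}\log\frac{\cosh(\frac{\pi}{2}\xi)+\cos(\frac{\pi}{2}(y+y'))}{\cosh(\frac{\pi}{2}\xi)-\cos(\frac{\pi}{2}(y-y'))}
=\frac1{4\pi}\log\Bigl(1+\frac{2\cos(\frac{\pi}{2}y)\cos(\frac{\pi}{2}y')}{\cosh(\frac{\pi}{2}\xi)-\cos(\frac{\pi}{2}(y-y'))}\Bigr).
\]
Now the numerator minus the denominator is indeed $2\cos(\frac{\pi}{2}y)\cos(\frac{\pi}{2}y')\ge 0$ for $y,y'\in[-1,1]$. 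Hence $K_{y,y'}\ge 0$, it is even, non-increasing in $|\xi|$, and tends to $0$ at infinity.

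With this corrected formula the rest of your argument goes through: Riesz on each pair of slices, then Tonelli in $(y,y')$. The truncation $\min(K_{y,y'},N)$ is harmless but unnecessary, since Riesz's inequality holds for nonnegative measurable functions with possibly infinite integrals.
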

\begin{proof}
First, rearrangements preserve the Lebesgue norms. The symmetrization occurring only in the  $x$-variable also implies the preservation of the $y$-weighted integral. Thus we have $\mathcal{E}_2(\omega_{s}) = \mathcal{E}_2(\omega) $   and $\mathcal{E}_3(\omega_{s}) = \mathcal{E}_3(\omega)$.

 For a proof of $ \mathcal{E}_1(\omega_{s})   \ge  \mathcal{E}_1(\omega)$,  we refer to \cite[Proposition 3.1]{AbeChoi22}.
\end{proof}

We are in a position to construct a maximizer. Let $\{\omega_n\} \subset \mathcal{A}$ be a maximizing sequence for $\mathcal{E}$. Since the sequence $\{\omega_n\}$ is uniformly bounded in $L^2(\Omega)$, we can extract a subsequence, still denoted by $\{\omega_n\}$, and a limit function $\om_* \in \mathcal{A}$ such that $\omega_n \rightharpoonup \om_*$ weakly in $L^2(\Omega)$ as $n \to \infty$.

Note that Lemma \ref{lemma:positive energy} gives $|\mathcal{E}(\om)| \lesssim_{\ep} 1$ for any $\om \in \mathcal A$. To  conclude that $\om_*$ is a maximizer, we must establish that $\lim_{n \to \infty} \mathcal{E}(\omega_n) \le \mathcal{E}(\om_*)$.

\begin{lemma}
Let $\{\omega_n\}$ be a sequence in $\mathcal A$ such that
$
\omega_n\rightharpoonup \omega$ in $ L^{2}(\Omega)
$ as $\ n\to\infty$.
Assume that each $\omega_n$ is Steiner symmetric. Then, 
\begin{equation}\label{eq convergence of E}
\lim_{n\to \infty} \mathcal E(\om_n)\le \mathcal E(\om).
\end{equation}
\end{lemma}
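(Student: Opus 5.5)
We treat the three parts of $\mathcal E=\mathcal E_1+\mathcal E_2+\mathcal E_3$ separately.

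\textbf{The interaction energy.} Since every $\om_n\in\mathcal A$ is supported in $\{|y|\le \ep^{1-q}\}$, we have $\mathcal E_2(\om_n)=-\frac12\int y^2\om_n$. Here $y^2$ is bounded. We first check that this term passes to the limit under weak convergence, and note that the weak $L^2$ limit $\om$ still lies in $\mathcal A$, because $\mathcal A$ is convex and closed. The difficulty is that $y^2\mathbf 1_{\Omega}\notin L^2(\Omega)$ on the infinite channel, so weak $L^2$ convergence alone does not give convergence of $\int y^2\om_n$. We therefore first prove tightness in $x$ using Steiner symmetry. For a.e.\ $y$ the slice $x\mapsto\om_n(x,y)$ is even and non-increasing in $|x|$, so $|x|\,\om_n(x,y)\le \int_{\R}\om_n(x',y)\,dx'$. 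Hence $\om_n(x,y)\le m_n(y)/|x|$, where $m_n(y)=\int\om_n(\cdot,y)\,dx$ and $\int m_n\le \ep^2$.

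\textbf{Consequences of tightness.} From $\om_n\le m_n(y)/|x|$ and $\om_n\le \ep^{1-q}$ we obtain
\[
\int_{|x|>R}\om_n^2 \le \ep^{1-q}\int\!\!\int_{|x|>R}\min\bigl(\ep^{1-q},m_n(y)/|x|\bigr)\,dx\,dy .
\]
This bound alone does not give uniform decay, because $L^1$ mass could leak to infinity. What we need is smallness of the tails of the quadratic and linear functionals, not of the mass. The linear term $\mathcal E_2$ is harmless: mass escaping to infinity only makes $-\frac12\int y^2\om$ less negative, which is the wrong direction. We handle it by lower semicontinuity instead. Writing $\int y^2\om_n$ as a limit of integrals against the $L^2$ test functions $y^2\mathbf 1_{|x|<R}$ gives $\int y^2\om\le\liminf\int y^2\om_n$, by Fatou-type weak lower semicontinuity for nonnegative functions. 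Therefore $\limsup\mathcal E_2(\om_n)\le\mathcal E_2(\om)$.

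\textbf{The penalty energy.} The function $F$ is convex by Lemma \ref{lemma:construction of F}, so $\om\mapsto\int F(\om)$ is weakly lower semicontinuous on nonnegative functions. Since $\mathcal E_3=-\ep^2\int F(\om)$, we get $\limsup\mathcal E_3(\om_n)\le\mathcal E_3(\om)$.

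\textbf{The self-energy (main step).} It remains to show $\mathcal E_1(\om_n)\to\mathcal E_1(\om)$, or at least $\limsup\mathcal E_1(\om_n)\le \mathcal E_1(\om)$. We split $G=G\mathbf 1_{|x-x'|\le R}+G\mathbf 1_{|x-x'|>R}$. By Lemma \ref{lemma:kernel G}, the far part is bounded by $e^{-\pi R/2}$, so its contribution is at most $e^{-\pi R/2}\|\om_n\|_{L^1}^2\le e^{-\pi R/2}\ep^4$, uniformly in $n$.

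For the near part we localize further with $|x|\le M$. When both points satisfy $|x|,|x'|\ge M$, Steiner monotonicity gives $\om_n\le \ep^2/(M\cdot\ldots)$ in an averaged sense. We then apply Lemma \ref{lemma:G bounds}-type estimates to the tail $g=\om_n\mathbf 1_{|x|>M}$, using $\|g\|_{L^\infty}\lesssim\min(\ep^{1-q},\sup_y m_n(y)/M)$. This is the delicate point, since $m_n(y)$ is only integrable and not bounded. We would instead bound $\int\!\!\int_{|x-x'|\le R}|G|\,g(z)\,\om_n(z')$ using the $L^2$ norm. On a strip of width $R$, $\|\mathcal G_{\rm loc}\|_{L^2\to L^2}\lesssim R$, which gives $\lesssim R\,\|g\|_{L^2}\|\om_n\|_{L^2}$. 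The bound $\|g\|_{L^2}^2\le\ep^{1-q}\int g$ reduces the matter to showing that the mass outside $|x|\le M$ which interacts locally is small.

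If this mass does not tend to zero, we compare energies. Removing the far mass and rescaling decreases nothing, since $\mathcal E_2,\mathcal E_3$ are linear or sublinear per unit of mass. Alternatively, we accept the possibility of loss and note that for the self-energy on the compact region $|x|,|x'|\le M+R$ the kernel $G$ lies in $L^2_{\rm loc}$. The operator is then Hilbert--Schmidt, hence compact, and $\mathcal E_1$ restricted there converges under weak convergence. The remaining cross terms are controlled by the tail estimates above. Letting $M\to\infty$ and then $R\to\infty$ yields $\limsup\mathcal E_1(\om_n)\le\mathcal E_1(\om)$.

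I expect the main obstacle to be the uniform control of the locally interacting tail mass. The fallback is the Hilbert--Schmidt compactness on bounded regions, combined with the pointwise Steiner bound $\om_n(x,y)\le m_n(y)/|x|$ integrated against the logarithmic kernel.
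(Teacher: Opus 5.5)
\textbf{Verdict: there is a genuine gap in the self-energy step.} Your treatment of $\mathcal E_2$ and $\mathcal E_3$ is fine and matches the paper: weak lower semicontinuity, by testing against $y^2\mathbf 1_{|x|<R}$ and by convexity of $F$. The self-energy is the only nontrivial part, and the paper imports it from Abe--Choi.

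\textbf{Why your route fails.} You reduce the tail contribution to smallness, uniformly in $n$, of $\|\om_n\mathbf 1_{|x|>M}\|_{L^2}$, i.e.\ of the mass of $\om_n$ in $\{|x|>M\}$. That smallness is false for Steiner symmetric elements of $\mathcal A$. Take $\om_n=\ep^{1-q}\mathbf 1_{\{|x|<L_n,\,|y|<\delta_n\}}$ with $4L_n\delta_n\ep^{1-q}=\ep^2$ and $L_n\to\infty$. Then $\om_n\rightharpoonup 0$, yet for every fixed $M$ one has $\int_{|x|>M}\om_n\to\ep^2$ and $\|\om_n\mathbf 1_{|x|>M}\|_{L^2}^2\to\ep^{3-q}$.

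The energy-comparison alternative is not available either. The lemma concerns an arbitrary Steiner symmetric sequence in $\mathcal A$, not a maximizing one, so ``removing far mass'' proves nothing here. In the Hilbert--Schmidt fallback, the compact-region part is fine. However, the cross terms are said to be ``controlled by the tail estimates above'', and those estimates were never established. As written, the main step is not proved.

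\textbf{The missing idea.} Apply Steiner symmetry to the \emph{potential}, not to the density or the tail mass. Two facts are needed:
\begin{itemize}
\item the Steiner bound $\om_n(x',y')\le m_n(y')/(2|x'|)$;
\item the kernel bound $\sup_{z,y'}\int_{\R}|G(z,(x',y'))|\,dx'\lesssim 1$, which follows from Lemma \ref{lemma:kernel G}.
\end{itemize}
Splitting into $|x'|\le |x|/2$ and $|x'|\ge |x|/2$ then gives $0\le \mathcal G\om_n(z)\lesssim \|\om_n\|_{L^1}/|x|\le \ep^2/|x|$ uniformly in $n$. This is exactly Lemma \ref{lemma:psi decay by steiner}.

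\textbf{Completing the argument.}
\begin{itemize}
\item Tail: $\int_{|x|>M}\om_n\,\mathcal G\om_n\lesssim \ep^4/M$, regardless of how much mass sits in $\{|x|>M\}$.
\item Bulk: $\mathcal G:L^2\to H^1_0$ is bounded on the channel by the Poincar\'e inequality in $y$. Rellich on $\{|x|\le M\}$ then gives $\mathcal G\om_n\to\mathcal G\om$ strongly in $L^2(\{|x|\le M\})$. Hence $\int_{|x|\le M}\om_n\,\mathcal G\om_n\to\int_{|x|\le M}\om\,\mathcal G\om$, as a strong--weak pairing.
\item Limit: $\int_{|x|>M}\om\,\mathcal G\om\to 0$, since $\om\in L^1$ and $\mathcal G\om\in L^\infty$ by Lemma \ref{lemma:G bounds}. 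Letting $M\to\infty$ yields $\mathcal E_1(\om_n)\to\mathcal E_1(\om)$.
\end{itemize}
Your closing remark about ``the pointwise Steiner bound integrated against the logarithmic kernel'' points at exactly this estimate. It has to replace the tail-mass reduction, not merely serve as a fallback to it.
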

\begin{proof}
Following the argument in \cite[Lemma $3.5$]{AbeChoi22}, we obtain
    \begin{equation}\label{eq convergence of E12}
\lim_{n\to \infty }\mathcal{E}_1[\omega_{n}]= \mathcal{E}_1[\omega] .
\end{equation}
Since $F$ is convex, the functional $\mathcal{E}_3 = \int F(\om)$ is lower semi-continuous. Thus, there hold
$$
 -\mathcal{E}_2[\omega ]=\frac12\int y^2|\om(z)|\,dz \le \liminf_{n \to \infty}\frac12\int y^2|\om_n(z)|\,dz
 $$
and
 $$-\mathcal{E}_3[\omega ]= \ep^2\int F(\om(z))\,dz \le \liminf_{n \to \infty}\ep^2\int F(\om_n(z))\,dz.
$$
Therefore,
$$
\mathcal E_2(\om)+\mathcal E_3(\om) \ge \limsup \mathcal E_2(\om_n)+\mathcal E_3(\om_n),
$$
which completes the proof.
\end{proof}

\mdfsetup{skipabove=2pt,skipbelow=2pt}
\begin{mdframed}[linewidth=1pt,frametitle={The maximizer need not be unique:},nobreak=true]
 
From now on,   we fix one maximizer $\om_*  \in \mathcal A$ and prove Proposition \ref{prop:maximizer} for this $ \om_*$.

\end{mdframed}

\subsection{Amplitude upper bound}

Before deriving the Euler--Lagrange equation, we must ensure that the maximizer does not saturate the upper amplitude constraint of the admissible class $\mathcal{A}$. We prove this via a direct energy comparison, exploiting the rapid growth of the penalty function $F$.

Next, we show that $\om_*$ does not touch the boundary of the constraint $ \om_*(z)\le    \ep^{1-q}$. This is crucial for deriving the desired form of the Euler--Lagrange equation later.
\begin{lemma}\label{lemma:omega upperbound}
There exists $\ep_0>0$ such that for all $0< \ep \leq \ep_0$,   the set $\{z \in \Om \mid \om_*(z)\ge    \ep^{1-q} \}$ has  zero Lebesgue measure.
\end{lemma}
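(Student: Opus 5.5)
We argue by energy comparison. Suppose the set $S:=\{\om_*\ge \ep^{1-q}\}$ has positive measure; then on $S$ we have $\om_*=\ep^{1-q}$. We construct a competitor by truncating at a lower level and redistributing the removed mass. The guiding fact is Lemma \ref{lemma:construction of F}(4): above the threshold $\ep^{1-q}|\log\ep|^{-2}$ the penalty is quadratic, $F'(s)=|\log\ep|^2\ep^{q-1}s$. At the level $s=\ep^{1-q}$ this gives $F'(s)=|\log\ep|^2$, so the marginal penalty $\ep^2F'(\om_*)\approx \ep^2|\log\ep|^2$. The marginal gain from self-energy is only $\mathcal G\om_*\lesssim \ep^2|\log\ep|$ by Lemma \ref{lemma:G bounds}, and the gain from $\mathcal E_2$ is nonpositive. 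Lowering the amplitude on $S$ should therefore strictly increase $\mathcal E$, by a factor of $|\log\ep|$.

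Concretely, fix a small $\eta>0$ and set $\om_\eta:=\om_*-\eta\mathbf 1_{S'}$, where $S'=\{\om_*>\ep^{1-q}/2\}\supset S$. Since $\om_\eta\ge 0$, $\int\om_\eta\le\ep^2$, and the support is unchanged, we have $\om_\eta\in\mathcal A$; no mass needs to be added back, because the constraint is an inequality. Expand $\mathcal E(\om_\eta)-\mathcal E(\om_*)$ to first order in $\eta$. The self-energy contributes
\[
-\eta\int_{S'}\mathcal G\om_*+O(\eta^2),
\]
which is bounded below by $-C\eta|S'|\ep^2|\log\ep|$ (using the pointwise bound on $\mathcal G\om_*$ from Lemma \ref{lemma:G bounds}, since $\om_*\in\mathcal A$ satisfies its hypotheses) plus a quadratic term $\tfrac{\eta^2}{2}\int\mathbf 1_{S'}\mathcal G\mathbf 1_{S'}\ge 0$. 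The interaction term changes by $+\tfrac{\eta}{2}\int_{S'}y^2\ge0$. The penalty term changes by
\[
\ep^2\int_{S'}\big(F(\om_*)-F(\om_*-\eta)\big)\ge \ep^2\eta\int_{S'}F'(\om_*-\eta),
\]
using convexity from Lemma \ref{lemma:construction of F}(1). For $\eta$ small, $\om_*-\eta\ge\ep^{1-q}/4\gg\ep^{1-q}|\log\ep|^{-2}$ on $S'$, so by (4) we get $F'(\om_*-\eta)\ge|\log\ep|^2/4$. Hence
\[
\mathcal E(\om_\eta)-\mathcal E(\om_*)\ge \eta|S'|\ep^2\big(\tfrac14|\log\ep|^2-C|\log\ep|\big)>0
\]
for $\ep\le\ep_0$, contradicting maximality. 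Note that $|S'|\ge|S|>0$.

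The main obstacle is making the pointwise bound on $\mathcal G\om_*$ uniform with an explicit constant. This requires checking that Lemma \ref{lemma:G bounds} applies to $\om_*$ itself, which holds since $\|\om_*\|_{L^1}\le\ep^2$ and $\|\om_*\|_{L^\infty}\le\ep^{1-q}$. One must then confirm that its logarithmic factor is strictly weaker than the $|\log\ep|^2$ coming from the penalty. A simpler alternative avoids $S'$: take the perturbation $\om_*-\eta\mathbf 1_S$ and use only that $\om_*=\ep^{1-q}$ on $S$. The estimate is the same, and the only requirement is that $\eta$ be small relative to $\ep^{1-q}$.
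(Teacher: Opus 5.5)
Your proof is correct, but it takes a genuinely different route from the paper.

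The paper uses a mass-preserving swap $\om_*-\eta\mathbf 1_A+\eta\mathbf 1_B$. Here $B\subset\R\times[-\ep^2,\ep^2]$ is a set of equal measure on which $\om_*\le\ep$. The paper must then pay for the mass added on $B$: the interaction loss $-\eta|A|\ep^4$, the penalty cost $-\eta|A|\ep^2$ (using $F'(2\ep)\le 1$), and the cross terms in $\Delta_1$. All of these are beaten by the gain $\tfrac12\eta|A|\ep^2|\log\ep|^2$ on $A$.

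You instead simply truncate, using that the mass constraint in $\mathcal A$ is the inequality $\int\om\le\ep^2$, so $\om_*-\eta\mathbf 1_{S'}$ stays admissible. Then the interaction change $\tfrac{\eta}{2}\int_{S'}y^2$ and the quadratic self-energy term $\tfrac{\eta^2}{2}\int\mathbf 1_{S'}\mathcal G\mathbf 1_{S'}$ both have favorable signs. The only competing term is the linear one, $-\eta\int_{S'}\mathcal G\om_*\ge -C\eta|S'|\ep^2|\log\ep|$, so your bookkeeping is strictly shorter.

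Your argument also yields more. You only use $|S'|>0$, so it shows directly that $\{\om_*>\ep^{1-q}/2\}$ is null. Lowering the threshold, the same computation gives $\om_*\lesssim\ep^{1-q}|\log\ep|^{-1}$ a.e., consistent with what the Euler--Lagrange relation $\ep^2F'(\om_*)\le\psi_*\lesssim\ep^2|\log\ep|$ later implies.

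What the paper's swap buys is robustness. It would still work if the admissible class fixed the mass, $\int\om=\ep^2$, whereas pure truncation would leave that class.
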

\begin{proof}
Suppose, for contradiction, that there exists a set   $A \subset \R  \times [-\ep^{1-q}, \ep^{1-q}  ] $ of positive measure such that $\om_* |_{A} \ge \ep^{1-q}$.

Due to the constraint $\int \om_*(z)dz \le \ep^{2}$ in the class $ \mathcal{A}$ and the infinite length of the channel, there also exists a set $ B \subset \R  \times [-\ep^{2}, \ep^{2}  ]$ such that $ \om_* |_{B} \le \ep$.

Fix a small parameter $0 <\eta \le \ep $ such that 
\begin{equation}\label{eq: aux lemma:omega upperbound 1}
F'( \ep^{1-q} -\eta) \ge \frac{1}{2} |\log\ep|^2
\end{equation}
This is possible thanks to Lemma \ref{lemma:construction of F}. Using the chosen sets $A,B$ and this parameter $\eta>0$, we define
    $$
    \om(z)=\om_*(z)-\eta\mathbf{1}_{A}+\eta\mathbf{1}_{B},
    $$
By shrinking the set  $A$ or $B$ if necessary, we can assume  $0<|A| =|B| \le \ep $, and hence $\om \in \mathcal{A}$ by design. 

We can now compute the difference of penalized energy between the maximizer $ \om_*$ and the perturbed $\om$:
\begin{align*}
        \mathcal{E}(\om)-\mathcal{E}(\om_*)=& \Delta_1 + \Delta_2 + \Delta_3
     \end{align*} 
     where $\Delta_1=\mathcal{E}_1(\om)-\mathcal{E}_1(\om_*)$ includes all the bilinear terms:
     \begin{equation}
      \begin{aligned}
          \Delta_1 &   =  -  \eta\int \mathcal{G} \om_*(z) \mathbf{1}_{A}  \,dz + \eta\int \mathcal{G}  \om_*(z) \mathbf{1}_{B} \,dz \\ 
        &\quad - \eta   \int \mathcal{G}\eta \mathbf{1}_{B} (z)\mathbf{1}_{A}(z )\,dz \\
        &\quad + \frac{\eta }{2}  \int \mathcal{G}\eta\mathbf{1}_{B}(z)\mathbf{1}_{B}(z)\,dz+ \frac{\eta }{2} \int \mathcal{G}\eta \mathbf{1}_{A}(z)\mathbf{1}_{A}(z)\,dz,
      \end{aligned}
  \end{equation}      
   $\Delta_2=\mathcal{E}_2(\om)-\mathcal{E}_2(\om_*)$ consists of the linear changes:
  \begin{equation}
      \begin{aligned}
         \Delta_2 =  \eta\int y^2\mathbf{1}_{A}(z)\,dz -\eta\int y^2 \mathbf{1}_{B}(z)\,dz ,
      \end{aligned}
  \end{equation}   
  and $\Delta_3=\mathcal{E}_3(\om)-\mathcal{E}_3(\om_*)$ is the change of penalty energy:
     \begin{equation}
      \begin{aligned}
  \Delta_3 = \ep^2\int F(\om_*)\,dz -\ep^2\int F(\om)\,dz.
      \end{aligned}
  \end{equation}

The choice $|A|,|B|\le \ep$ and $\eta \le \ep$ implies that $ \eta 1_{B}, \eta 1_A$ are all in the admissible class $\mathcal{A}$ and hence Lemma \ref{lemma:G bounds} gives 
\begin{align}
   | \Delta_1 | & \lesssim \eta \ep^2 |\log \ep| \Big(  \int 1_A + \int 1_B \Big)  \nonumber  \\
   & \lesssim \eta \ep^2 |\log \ep| |A| . \label{eq: aux lemma:omega upperbound 2}
\end{align}

For the linear change, we estimate its lower bound by dropping the positive term, 
 \begin{equation}\label{eq: aux lemma:omega upperbound 3}
        \Delta_2  \geq       - \eta\int y^2 \mathbf{1}_{B}(z)\,dz = -\eta |B| \ep^{4} = -\eta |A| \ep^{4} . 
 \end{equation}

Finally, for the penalty energy,
\begin{align*}
 \Delta_3=  \ep^2 \int_{\Om} F(\om_*)\,dz -\ep^2 \int_{\Om} F(\om)\,dz
 =& \ep^2 \int_{B} F(\om_*)dz-\ep^2\int_{B} F(\om_*+ \eta )dz\\
 &+\ep^2\int_{A} F(\om_*)dz-\ep^2\int_{A} F(\om_*-\eta)dz.
    \end{align*}
We first consider the change on the set $B$. Recall that  $\om_* |_{A} \ge \ep^{1-q}$ and $ \om_* |_{B} \le \ep$. From this, the convexity of $F$ implies that
\begin{equation}\label{eq: aux lemma:omega upperbound 4a}
\ep^2 \int_{B} F(\om_*)dz-\ep^2\int_{B} F(\om_*+ \eta )dz \ge   - \ep^2  \eta \int_{B} F'(2 \ep)dz .
\end{equation}
By taking $\ep_0>0$ sufficiently small, $2\ep \leq   \frac{\ep^{1-q}}{|\log \ep|^2}$, then the monotonicity of $F'$ together with \eqref{item:F4  F' ge log}  of Lemma \ref{lemma:construction of F} yield 
$$
\int_{B} F'(2 \ep)dz \le \int_{B} F'(\frac{\ep^{1-q}}{|\log \ep|^2})dz  = |B|  =|A| ,
$$ 
and hence by \eqref{eq: aux lemma:omega upperbound 4a} we get
\begin{equation}\label{eq: aux lemma:omega upperbound 4aa}
 \ep^2 \int_{B} F(\om_*)dz-\ep^2\int_{B}F(\om_*+ \eta )dz     \ge  - \eta |A|  \ep^{2 }  .
\end{equation}

Finally, we show the gain term of removing mass from the set $A$ outweighs all the other losses. Indeed, the same convexity argument using \eqref{eq: aux lemma:omega upperbound 1} implies
\begin{equation}\label{eq: aux lemma:omega upperbound 4b}
\begin{aligned}
\ep^2 \int_{A} F(\om_*)dz-\ep^2\int_{A} F(\om_* - \eta )dz & \ge     \ep^2 \eta \int_{A} F'(\om_* - \eta )dz \\
&\ge  \frac{\ep^2 \eta}{2} \int_{A}   |\log \ep|^{ 2} dz \\
& \ge \frac{\eta   |A|  \ep^2}{2}  |\log \ep|^{ 2}.
\end{aligned}
\end{equation}

Gathering the estimates \eqref{eq: aux lemma:omega upperbound 2}, \eqref{eq: aux lemma:omega upperbound 3}, \eqref{eq: aux lemma:omega upperbound 4aa} and \eqref{eq: aux lemma:omega upperbound 4b}, we see that the total change of the penalized energy is positive:
\begin{align*}
      \mathcal{E}(\om)-\mathcal{E}(\om_*) & =  \Delta_1 + \Delta_2 + \Delta_3 \\
      & \geq   \eta  |A|  \ep^{2} |\log \ep|^{ 2}-  C \eta |A|  \ep^2 |\log \ep|  - C  \eta |A| \ep^{2}  - C \eta|A|  \ep^4  .
\end{align*}

By taking $\ep_0>0$   sufficiently small,  for all $0<\ep \le \ep_0$, the positive first term dominates all other negative terms above, and we have
 \begin{align*}
      \mathcal{E}(\om)-\mathcal{E}(\om_*)  
        \geq  \frac{1}{2} \eta  |A|  \ep^{2} |\log \ep|^{ 2} 
\end{align*}
which is a contradiction to the maximality of $\mathcal{E}(\om_*)$.

\end{proof}

\subsection{The Euler--Lagrange equation}

With the upper bound of the maximizer $\om_* \in \mathcal{A}$ established, we can compute the first variation of the penalized energy functional that leads to the Euler--Lagrange equation.

We define the effective potential associated with the maximizer $\om_*$ as:
\begin{equation}\label{eq:def_H}
H(z) := \psi_*(z) - \frac{1}{2}y^2 - \ep^2 F'(\om_*(z))
\end{equation}
where $\psi_* = \mathcal{G}\om_*$ is the stream function.

\begin{proposition}\label{prop:Lagrange multiplier}
Let $\om_* \in \mathcal{A}$ be a maximizer of the energy $\mathcal{E}$. There exists a constant $\alpha \in \mathbb{R}$ (the Lagrange multiplier) such that  the effective potential $H$   given by \eqref{eq:def_H} satisfies
\begin{itemize}
\item for a.e.  $z \in \supp(\om_*)$, $H(z) = \alpha$.
    \item for a.e. $z \notin \supp (\om_*)$, $H(z) \le \alpha$. 
\end{itemize}

Moreover, for all sufficiently small $\ep>0$, the multiplier $\alpha \in \R^+$ is positive and satisfies the lower bound
\begin{equation}
    \alpha \gtrsim \ep^2 |\log \ep|.
\end{equation}
\end{proposition}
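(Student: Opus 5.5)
We derive the Euler--Lagrange relation by admissible mass-preserving variations, in the spirit of the bathtub principle, and then bound $\alpha$ from below by testing the relation against $\om_*$ and using the energy lower bound of Lemma \ref{lemma:positive energy}.

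\textbf{Step 1 (variations).} For $g\in L^\infty$ with compact support, the first variation of $\mathcal E$ at $\om_*$ in the direction $g$ is
\[
\frac{d}{dt}\mathcal E(\om_*+tg)\Big|_{t=0}=\int H\,g\,dz .
\]
This follows from the symmetry of $G$ and from the differentiability of $F$ on the relevant range. Since $F$ is $C^1$ and Lemma \ref{lemma:omega upperbound} gives $\om_*<\ep^{1-q}$ a.e., the differentiation is legitimate.

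\textbf{Step 2 (the two-point comparison).} Fix $\delta>0$ and let $S_\delta=\{\om_*\ge\delta\}$. Take $A\subset S_\delta$ and $B$ any set inside the strip $\{|y|\le\ep^{1-q}\}$, with $|A|=|B|$. The perturbation $\om_*-t\mathbf 1_A+t\mathbf 1_B$ stays in $\mathcal A$ for small $t>0$. Indeed, it is nonnegative because $t<\delta$, and it is below $\ep^{1-q}$ after shrinking $B$ to a set where $\om_*\le\ep^{1-q}-\delta'$, which has full measure by Lemma \ref{lemma:omega upperbound}. Its mass is unchanged and its support stays in the strip. Maximality then gives $\int_B H\le \int_A H$. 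Letting $\delta\to0$ and using Lebesgue points, we get $\operatorname{ess\,sup}_{\text{strip}}H\le \operatorname{ess\,inf}_{\supp\om_*}H$.

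\textbf{Step 3 (equality on the support).} Conversely, on $\supp\om_*$ we can also add mass on $A\subset S_\delta$ and remove it from $A'\subset S_\delta$. This shows $H$ is a.e. constant on $\supp\om_*$; call this constant $\alpha$.

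\textbf{Step 4 (the mass constraint and the whole channel).} We also need the mass constraint to be active, or else we would have to show $\alpha\ge0$ separately. If $\int\om_*<\ep^2$, the one-sided variation $\om_*+t\mathbf 1_B$ with $B$ far away in $x$ gives $H\le 0$ there. Since $\psi_*\to0$ as $|x|\to\infty$ (Lemma \ref{lemma:kernel G}), we have $H\approx -y^2/2$ there, so this gives nothing. However, the scaling $t\om_*$ shows $\frac{d}{dt}\mathcal E(t\om_*)|_{t=1}\ge0$ if the constraint is not saturated. We handle this in Step 5 together with the bound on $\alpha$. Outside the strip $\{|y|\le\ep^{1-q}\}$ we cannot vary. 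There we argue directly: $H\le \|\psi_*\|_\infty-\frac12\ep^{2-2q}\lesssim \ep^2|\log\ep|-\ep^{2-2q}<0\le\alpha$ by Lemma \ref{lemma:G bounds}, once $\alpha>0$ is known.

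\textbf{Step 5 (lower bound on $\alpha$; the main obstacle).} Multiply $H=\alpha$ by $\om_*$ and integrate:
\[
\alpha\int\om_*=\int\psi_*\om_*-\tfrac12\int y^2\om_*-\ep^2\int F'(\om_*)\om_* .
\]
This equals
\[
2\mathcal E(\om_*)+\tfrac12\int y^2\om_*-\ep^2\int\big(\om_*F'(\om_*)-2F(\om_*)\big).
\]
By Lemma \ref{lemma:positive energy}, $\mathcal E(\om_*)\gtrsim\ep^4|\log\ep|$. The middle term is nonnegative. By Lemma \ref{lemma:construction of F}(2), integrated only over the support, the last term is
\[
\lesssim \ep^2\cdot\frac{\ep^{1-q}}{|\log\ep|^2}\cdot|\supp\om_*| .
\]
The difficulty is controlling $|\supp\om_*|$, since (2) is a pointwise bound and is not weighted by $\om_*$. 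To handle it, I would refine Lemma \ref{lemma:construction of F}(2) to $sF'(s)-2F(s)\lesssim s$, which holds on $s\le \ep^{1-q}/|\log\ep|^2$ because $F'\le1$ there, and is $\le \ep^{1-q}|\log\ep|^{-2}\lesssim s$ above. This makes the last term $\lesssim\ep^2\int\om_*\le\ep^4$. Hence $\alpha\int\om_*\gtrsim\ep^4|\log\ep|$. Since $\int\om_*\le\ep^2$, this gives $\alpha\gtrsim\ep^2|\log\ep|>0$, and in particular $\alpha>0$, which closes Step 4.
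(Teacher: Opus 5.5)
Your proposal is correct and follows essentially the paper's argument. Mass-transfer variations inside the strip give $H\equiv\alpha$ on $\supp\om_*$ and $H\le\alpha$ on the rest of the strip, the region $|y|>\ep^{1-q}$ is handled by $H\le C\ep^2|\log\ep|-\tfrac12\ep^{2-2q}<0$, and testing the Euler--Lagrange relation against $\om_*$ together with $\mathcal E(\om_*)\gtrsim\ep^4|\log\ep|$ yields $\alpha\gtrsim\ep^2|\log\ep|$; the differences are only cosmetic, namely your pointwise bound $sF'(s)-2F(s)\le s$ in place of the paper's split at $s=\ep^{1-q}/|\log\ep|^2$ with Chebyshev on the superlevel set (the two are equivalent), and your Step 4 discussion of whether the mass constraint is active, which is unnecessary since only $\alpha>0$ from Step 5 is used.
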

\begin{proof}

\hfill

\noindent
\textbf{Step 1: Constancy of $H$:}

First of all, the admissible class $\mathcal{A}$ is convex: if $f,g \in \mathcal{A}$, so is $ tf + (1-t) g$ for any $ 0\le t\le 1$.

Since  $\om_*$ is a maximizer in $\mathcal{A}$, we must have $ \mathcal{E}(\om_*+ t(g - \om_*) ) -  \mathcal{E}(\om_* ) \le 0$ for any $ 0\le t \le 1$ and any $g\in \mathcal{A}$.

By taking $t\to 0+$, we have
\begin{equation}\label{eq:global_var}
\int_{\Omega } H(z) (g(z) - \om_*(z)) \, dz \le 0 \quad \text{for all } g \in \mathcal{A}.
\end{equation}

If $H(z)$ is not a constant on the full $\supp (\om_*)$, a suitable choice of $g$ then implies a contradiction to \eqref{eq:global_var}. Indeed, there exist  two sets $A,B  \subset  \supp (\om_*) $  of positive measure on which $H(z)$ takes different values, say $ H|_{B} > H |_{A}$.

Since $\ep^{1-q}>\om_*> 0$ $a.e.$ on $\supp (\om_*)$ by Lemma \ref{lemma:omega upperbound}, after
possibly replacing \(A\) and \(B\) by subsets of equal positive measure, we may
choose \(\eta>0\) sufficiently small such that $\om_* \ge \eta$ in $A$ and $\om_* \le \ep^{1-q}-\eta$ in $B$, which gives
\[
g:=\omega_\ast-\eta 1_A+\eta 1_B\in \mathcal A .
\]
Then we have a  contradiction: 
\begin{align*}
\int_{\Omega} H(z) (g(z) - \om_*(z)) \, dz &= \int_{\Omega} H(z) (-\eta \mathbf{1}_{A} + \eta \mathbf{1}_{B}) \, dz \\
& = \eta \int_{B} H(z) \, dz - \eta \int_{A} H(z) \, dz > 0.
\end{align*} 
 
Let $ S_\ep =\{ |y| \le \ep^{1-q}\}$ be the strip from the class $\mathcal{A}$. Assuming there exists a set $A \subset S_{\ep}$ outside $\supp (\om_*)$ such that $H(z) >\alpha$ for $z\in A$, then arguing as above using \eqref{eq:global_var},  we get 
\begin{equation}\label{eq H(z) in Sep}
    H(z) \le \alpha \quad \text{a.e. on $S_\ep \setminus \supp (\om_*)$}.
\end{equation}
Note that $|y| \ge \ep^{1-q}$ on $\Omega \setminus S_{\ep}$, it follows from Lemma \ref{lemma:G bounds} that $H(z)\leq C \ep^2 |\log \ep| - \ep^{2-2q}<0$ when $z \in \Om\setminus S_{\ep}$.  Therefore, together with \eqref{eq H(z) in Sep}, we obtain
\begin{equation}\label{eq H(z) in Om}
    H(z) \le \max\{\alpha\,,0\} \quad \text{a.e. on $\Om \setminus \supp (\om_*)$}.
\end{equation}

\vspace{0.5em}
\noindent
\textbf{Step 2: Estimate of $\alpha$:}

Recall the energy functional in terms of $\om_*$,
    $$
    \mathcal E(\om_*) = \frac{1}{2} \int \psi_* \om_* - \frac{1}{2} \int y^2 \om_* - \ep^{2} \int F(\om_*),
    $$
and the Euler--Lagrange equation on the support of $\om_*$, \begin{equation}\label{eq:EL_corrected}
        \psi_* - y^2/2 - \alpha  = \ep^{2} F'(\om_*).
    \end{equation}

    To bound $\alpha$, multiplying \eqref{eq:EL_corrected} by $\frac{1}{2}\om_*$ and integrating over the domain $\Omega$:
    $$
    \frac{1}{2} \int \psi_* \om_* - \frac{1}{4} \int y^2 \om_* - \frac{1}{2} \alpha \int \om_* = \frac{1}{2} \ep^{2} \int \om_* F'(\om_*).
    $$
    Notice that we can reconstruct $E(\om_*)$ on the left-hand side:
    $$
    E(\om_*) + \frac{1}{4} \int y^2 \om_* + \ep^{2} \int F(\om_*) - \frac{1}{2} \alpha \int \om_* = \frac{1}{2} \ep^{2} \int \om_* F'(\om_*).
    $$
    Since $\int y^2 \om_* \ge 0$, we obtain the inequality
    \begin{equation}\label{eq: aux prop EL multipler 1}
       E(\om_*) - \frac{1}{2} \alpha \int \om_* \le \frac{1}{2} \ep^{2} \int_{\Om} \left( \om_* F'(\om_*) - 2 F(\om_*)\right).
    \end{equation}
    We first prove the proposition assuming the following.
    
    \noindent
    \textbf{Claim:} for any $\ep>0$, there holds the estimate
    \begin{equation}\label{eq claim wF'(w)-2F(w)}
     \int_{\Om} \om_* F'(\om_*) - 2 F(\om_*)  \lesssim \ep^2.
    \end{equation}

    Using the \textbf{Claim} in \eqref{eq: aux prop EL multipler 1}, we then  have
    $$
    \alpha \int \om_* \gtrsim E(\om_*)-C\ep^{4}>0.
    $$ 
    Since the maximal energy $E(\om_*) \gtrsim |\log \ep| \ep^{4}$, solving for $\alpha$ gives:
    \begin{equation}\label{eq:alpha_global}
        \alpha \gtrsim   \ep^2|\log \ep|,
    \end{equation}
which, together with \eqref{eq H(z) in Om} yields
\begin{equation}
    H(x) \le \alpha \quad \text{a.e. on $\Om \setminus \supp (\om_*)$}.
\end{equation}

    Now it remains to prove the claim \eqref{eq claim wF'(w)-2F(w)}. We decompose the integral into
    \begin{align*}
        \int_{\Om} \om_* F'(\om_*) - 2 F(\om_*)&=\int_{\om_* \le \frac{\ep^{1-q}}{|\log \ep|^2}}   +\int_{\om_*\ge \frac{\ep^{1-q}}{|\log \ep|^2}}  .
    \end{align*}
    For the first part, using   Lemma \ref{lemma:construction of F}, we get
    $$
    \int_{\om_* \le \frac{\ep^{1-q}}{|\log \ep|^2}} \om_* F'(\om_*) - 2 F(\om_*) \lesssim \int_{\om_* \le \frac{\ep^{1-q}}{|\log \ep|^2}} \om_* F'(\om_*)\lesssim \int_{\om_* \le \frac{\ep^{1-q}}{|\log \ep|^2}} \om_* \lesssim \ep^2 .
    $$
    Meanwhile, for the second part, since $\int \om_*\le\ep^2$, we obtain $$
    \left| \supp\left(  \textbf{1}_{\om_* \ge \frac{\ep^{1-q}}{|\log \ep|^2}}\right) \right| \lesssim \ep^{1+q}|\log \ep|^2,
    $$
    which, together with \eqref{item:F2 sF'-2F} of Lemma \ref{lemma:construction of F} yields
    $$\int_{ \om_* \ge \frac{\ep^{1-q}}{|\log \ep|^2}} \om_* F'(\om_*) - 2 F(\om_*) \lesssim \ep^2.$$
    
    This completes the proof of the claim.

\end{proof}

\subsection{Bounds on the Support}
To finalize the proof of Proposition \ref{prop:maximizer}, we must establish the  geometric properties of the maximizer,   bounding its anisotropic support.

First, we need an auxiliary lemma that shows the horizontal decay of the stream function.
\begin{lemma}\label{lemma:psi decay by steiner}
    Assuming $\rho \in L^1(\Om)$ has Steiner symmetry, then there holds 
    $$
    |\mathcal{G} \rho(z)| \lesssim \frac{\|\rho\|_{L^{1}}}{|x|}.
    $$
\end{lemma}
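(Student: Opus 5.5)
The plan is to combine the Steiner monotonicity with the kernel bound from Lemma \ref{lemma:kernel G}, and split the source $\rho$ into the part near $x$ and the part far from $x$. Since $G$ and the conclusion are invariant under $x\mapsto -x$ (by the Steiner symmetry of $\rho$), we may assume $x>0$; we may also assume $|x|\ge 2$, say, is not needed, but for $|x|\lesssim 1$ we need a separate treatment. We may further take $\rho\ge 0$, since a Steiner-symmetric function in the paper's sense is non-negative. Write $z=(x,y)$, $z'=(x',y')$, and decompose
\[
\mathcal G\rho(z)=\int_{|x'-x|\le x/2}G(z,z')\rho(z')\,dz'+\int_{|x'-x|> x/2}G(z,z')\rho(z')\,dz' =: I_1+I_2 .
\]

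\textbf{Far part.} On the region $|x'-x|>x/2$, Lemma \ref{lemma:kernel G} gives $|G(z,z')|\lesssim e^{-\frac{\pi}{4}x}$ when $x/2\ge 1$. Hence
\[
|I_2|\lesssim e^{-\pi x/4}\|\rho\|_{L^1}\lesssim \frac{\|\rho\|_{L^1}}{x}.
\]
When $x\le 2$, the kernel is not bounded, so we only use the pointwise kernel bound on the set where $|z-z'|$ is not small; that portion is absorbed into the near-part estimate below.

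\textbf{Near part (main obstacle).} For $|x'-x|\le x/2$ we have $x'\ge x/2$, so the monotonicity of $\rho$ in $|x'|$ gives, for a.e.\ $y'$,
\[
\rho(x',y')\le \frac{1}{x/2}\int_0^{x/2}\rho(s,y')\,ds\le \frac{1}{x}\int_{\R}\rho(s,y')\,ds =: \frac{m(y')}{x},
\]
where $\int_{-1}^1 m(y')\,dy'=\|\rho\|_{L^1}$. Therefore
\[
|I_1|\le \frac1x\int_{-1}^1 m(y')\left(\int_{\R}|G(x,y,x',y')|\,dx'\right)dy'.
\]
By Lemma \ref{lemma:kernel G}, the inner integral is bounded uniformly in $(x,y,y')$: the logarithmic singularity contributes $\int_{-1}^{1}(1+|\log|x-x'||)\,dx'\lesssim 1$, using $|\log|z-z'||\le|\log|x-x'||$ for $|z-z'|\le1$, and the exponential tail is integrable. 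Hence $|I_1|\lesssim \|\rho\|_{L^1}/x$.

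This uniform one-dimensional integrability of $G$ in $x'$, with the $y'$ variable frozen, is the key point: it lets the slice mass $m(y')$ pair only with a bounded quantity. The same argument also covers $x\le 2$ without any splitting. Indeed, apply the slice bound on $\{|x'|\ge x/2\}$. On the remaining region $\{|x'|<x/2\}$ we have $|x-x'|\ge x/2$, so there $|G|\lesssim 1+|\log (x/2)|\lesssim 1/x$ for $x\le 2$. Summing the two contributions yields the lemma.
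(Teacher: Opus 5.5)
Your argument is correct and is essentially the paper's proof. The paper splits into $\{|x'|\le |x|/2\}$, where $|x-x'|\ge |x|/2$ gives $|G|\lesssim 1/|x|$, and $\{|x'|\ge |x|/2\}$, where Steiner monotonicity combined with the uniform $x'$-integrability of $G$ at frozen $y'$ gives the bound. That is exactly what you do for $x\le 2$, and also for $x\ge 2$ with the exponential tail replacing the $1/|x|$ kernel bound.

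The only difference is cosmetic: you bound $\rho(x',y')$ by its horizontal line average $\frac1x\int_{\R}\rho(s,y')\,ds$, whereas the paper uses $\rho(x',y')\le\rho(x/2,y')$ together with the decay $m(x)\le M/|x|$ of the vertical slice mass $m(x)=\int_{-1}^1\rho(x,y)\,dy$. One small slip: the logarithmic integral should be taken over $\{|x-x'|\le 1\}$ rather than $[-1,1]$.
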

\begin{proof}
    Set $M=\|\rho\|_{L^1}$ and 
    $$
    m(x)=\int_{-1}^1 |\rho(x,y)|\,dy.
    $$ 
    The Steiner symmetry of $\rho$ then implies that $m(x)$ is non-increasing when $x \ge 0$. It follows that for any $x\ge 0$,
    \begin{equation}
        \frac{xm(x)}{2}\le \int_{x/2}^x m(s)\,ds \le\int_0^{\infty} m(s)\,ds =\frac{M}{2}
    \end{equation}
    and hence  
    \begin{equation}\label{eq:aux lemma:psi decay by steiner 1}
    m(x)=m(|x|) \le \frac{M}{|x|} \quad \text{for any $x\in \R$}.
   \end{equation}
   Now for the integral $\mathcal{G} \rho$, we first bound the integrand by its absolute value
    $$|\mathcal{G} \rho (z)| \lesssim \iint |G(z,z')| |\rho(x',y')|\,dx'\,dy',$$
    and then decompose the integral domain into the region $|x'| \le |x|/2$ and $|x'| \ge |x|/2$.

    \noindent
    \textbf{Case I: $|x'| \le |x|/2$}. In this case, we have $|z-z'|\ge |x-x'| \ge  \frac{|x|}{2}$. Then Lemma \ref{lemma:kernel G} gives
    $$
    |G(z,z')| \leq  \frac{1}{|x-x'|} \lesssim \frac{1}{|x|},
    $$ 
    which implies
    $$
    \iint_{|x'| \le |x|/2} |G(z,z')| |\rho(x',y')| \,dx'\,dy' \lesssim \frac{ \|\rho\|_{L^1}}{|x|} \lesssim \frac{M}{|x|}.
    $$

    \noindent
    \textbf{Case II: $|x'| \ge |x|/2$}. In this case, \eqref{eq:aux lemma:psi decay by steiner 1} yields
    \begin{equation*}
        \begin{aligned}
            \iint_{|x'| \ge |x|/2} |G(z,z')| |\rho(x',y') |\,dx'\,dy' &\lesssim \iint_{|x'| \ge |x|/2} |G(z,z')| |\rho(\frac{x}{2},y') | \,dx'\,dy' \\
            & \lesssim \int_{-1}^1 |\rho(\frac{x}{2},y')|\,dy' \\
            &=m(\frac{x}{2}) \lesssim \frac{M}{|x|},
        \end{aligned}
    \end{equation*}
    which completes the proof.
\end{proof}

Applying Proposition \ref{Prop Steiner}, we may assume that our maximizer $\om_*$ is Steiner symmetric. This allows us to use the Euler--Lagrange equation and the above lemma to estimate the size of $\supp (\om_{*})$.

\begin{lemma}\label{lemma:xy_support} 
    There exists $\ep_0 >0$ such that for any $0<\ep \le \ep_0$, the maximizer $\om_*$ is supported in the region $|x| \lesssim  {|\log \ep|}^{-1} $ and $|y| \lesssim \ep|\log \ep|^\frac{1}{2} $
\end{lemma}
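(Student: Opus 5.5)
The plan is to read off both support bounds from the Euler--Lagrange relation of Proposition \ref{prop:Lagrange multiplier}. On $\supp(\om_*)$ we have $H=\alpha$ a.e., so, since $F'\ge 0$,
\[
\psi_*(z)-\tfrac12 y^2-\alpha=\ep^2F'(\om_*(z))\ge 0 \qquad\text{a.e. on }\supp(\om_*).
\]
Hence every point of the support satisfies $\psi_*(z)\ge \alpha+\tfrac12 y^2$. Combined with the lower bound $\alpha\gtrsim \ep^2|\log\ep|$, this says $\psi_*$ must be at least of order $\ep^2|\log \ep|$ at any support point. So it suffices to show that $\psi_*$ is too small whenever $|y|$ or $|x|$ is large.

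\textbf{Vertical bound.} I will use Lemma \ref{lemma:G bounds}, which applies because $\om_*\in\mathcal A$ (so $\|\om_*\|_{L^1}\le\ep^2$ and $\|\om_*\|_{L^\infty}\le\ep^{1-q}$). It gives $\|\psi_*\|_{L^\infty}\le C_1\ep^2|\log\ep|$. Then on the support
\[
\tfrac12 y^2\le \psi_*-\alpha\le C_1\ep^2|\log\ep|,
\]
where I simply discard $\alpha>0$. This gives $|y|\lesssim \ep|\log\ep|^{1/2}$, which is exactly the claimed vertical scale.

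\textbf{Horizontal bound.} By Proposition \ref{Prop Steiner} we may take $\om_*$ Steiner symmetric, so Lemma \ref{lemma:psi decay by steiner} gives
\[
|\psi_*(z)|\le C_2\,\frac{\ep^2}{|x|}.
\]
At a support point we also have $\psi_*\ge\alpha\ge c_0\ep^2|\log\ep|$. Comparing the two yields $|x|\le (C_2/c_0)|\log\ep|^{-1}$.

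\textbf{Main obstacle.} The delicate point is that the Euler--Lagrange identity holds only a.e. on the support, while the claim concerns the essential support. Since both inequalities are pointwise a.e. statements, the set of support points violating them has measure zero, and this is enough for the essential support. A second subtlety is that the constant in Lemma \ref{lemma:psi decay by steiner} must be uniform in $\ep$. For $|x|\ge 1$ the exponential decay in Lemma \ref{lemma:kernel G} makes this clear. For $|x|\le1$ the lemma's proof uses $|G|\lesssim 1/|x-x'|$ where $|z-z'|$ can be small; I would recheck this using $|\log r|\lesssim r^{-1}$, which keeps $|G(z,z')|\lesssim 1/|x-x'|$ uniformly.

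The remaining worry is the constant in $\alpha\gtrsim\ep^2|\log\ep|$. That constant comes from $E(\om_*)\gtrsim q|\log\ep|\ep^4$ together with $\int\om_*\le\ep^2$, so it is $\ep$-independent, with the lower bound on $\alpha$ of size $\sim q\,\ep^2|\log\ep|$. The horizontal support bound therefore depends on $q$, which is acceptable since $q$ is fixed.
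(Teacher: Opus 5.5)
Your proposal is correct and follows essentially the same route as the paper: both bounds come from $\psi_* \ge \alpha + \frac12 y^2$ on $\supp(\om_*)$. The vertical scale follows from $\|\psi_*\|_{L^\infty}\lesssim \ep^2|\log\ep|$ (Lemma \ref{lemma:G bounds}), and the horizontal scale from comparing the Steiner decay $\psi_*\lesssim \ep^2/|x|$ (Lemma \ref{lemma:psi decay by steiner}) with $\alpha\gtrsim\ep^2|\log\ep|$, exactly as in the paper; your extra checks (a.e.\ versus essential support, and the uniform bound $|G(z,z')|\lesssim |x-x'|^{-1}$ when $|x'|\le |x|/2$) are sound and just make explicit what the paper leaves implicit.
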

\begin{proof}
 
 \hfill

 \noindent
 \textbf{Part 1: the $x$-support.} Recall the Euler--Lagrange equation on the support of $\om_*$:
 \begin{equation}\label{eq:aux lemma:xy_support 0}
        \psi_* - y^2/2 - \alpha  = \ep^{2} F'(\om_*).
    \end{equation}
    Since  both $F'(\om_*)$ and $y^2/2$  are non-negative  on $ \supp(\om_*)$,  for any point $z=(x,y) \in \supp(\om_*)$, we have 
  \begin{equation}\label{eq:aux lemma:xy_support 1}
    \psi_*(x,y) \ge y^2/2 + \alpha \ge \alpha  \gtrsim |\log \ep| \ep^{2},  
  \end{equation}
 where we have also used the lower bound on $\alpha$ from   Proposition \ref{prop:Lagrange multiplier}.

    Finally, by the Steiner symmetry  Lemma \ref{lemma:psi decay by steiner}, on $ \supp(\om_*)$
    $$
    \psi_*(x,y) \lesssim \frac{\| \om_* \|_{L^1}}{|x|} \le \frac{\ep^{2}}{|x|}.
    $$
    Combining this with our lower bound \eqref{eq:aux lemma:xy_support 1} yields:
    $$
    |\log \ep| \ep^{2} \lesssim \frac{\ep^{2}}{|x|} \implies |x| \lesssim \frac{1}{|\log \ep|}.
    $$
    \noindent
 \textbf{Part 2: the $y$-support}

Using the lower bound on the multiplier $\alpha$ from Proposition  \ref{prop:Lagrange multiplier} again, we have on $\supp (\om_*)$
\begin{equation}
\psi_*(x,y) - y^2/2 - \ep^2 F'(\om_*)   \ge C |\log \ep| \ep^{2} .
\end{equation}
Since $F'\ge 0$, this means that
\begin{equation}
 y^2  \leq 2\psi_*(x,y) \quad \text{for all $z =(x,y) \in \supp (\om_*)$}.
\end{equation}
Thus by Lemma \ref{lemma:G bounds} again, $|y| \le C |\log \ep|^\frac{1}{2} \ep $ for any $ z=(x,y) \in \supp (\om_*)$.

\end{proof}

\subsection{Conclusion of the proof}

We now verify that all the claims in Proposition \ref{prop:maximizer} have been established. It remains to show the semilinear equation \eqref{eq:prop:maximizer 1} on the full channel $\Omega$.

By Proposition \ref{prop:Lagrange multiplier}, on the support of $\om_*$ we have
\begin{equation}
    H(z) = \psi_* - \frac{y^2}{2} -\ep^2 F'(\om_*(z)) = \alpha.
\end{equation}
So  the equation \eqref{eq:prop:maximizer 1} is satisfied on $\supp(\om_*)$. 

It remains to show \eqref{eq:prop:maximizer 1} on $\Om \setminus \supp (\om_*)$. Thanks to Proposition \ref{prop:Lagrange multiplier} again, on $\Om \setminus \supp (\om_*)$, there holds $H(z) \le \alpha$, i.e.   $\psi_* - \frac{y^2}{2} -\alpha \le 0 $. Since the vorticity function $f(t ) = 0$ for $t \le 0$, we have  $ \om_* = f( \ep^{-2}  (\psi_* - y^2/2 -\alpha ) )$ as desired.

\section{Traveling waves: modification of the variational construction}\label{sec:sketch traveling}

We now explain how the variational construction from Section \ref{sec:flex} is modified to produce the traveling waves in Corollary \ref{cor:flex}.

\subsection{The modified setup for traveling waves}\label{subsec:traveling def_F and A}

The only change is that the vorticity is centered near the critical layer \(y=c\) rather than near \(y=0\). Accordingly, we keep the same class of vorticity functions as in Section \ref{sec:flex}:

\begin{enumerate}
     \item $f(0)=0$ and $f$ is strictly increasing on $[0,+\infty)$. 
     \item $f(t) =  \frac{\ep^{1-q} }{|\log \ep|^2} t $ when $  t \ge 1 $.  
     \item $|f^{(n)}(t)|  \le C_n |\log \ep|^{-2}\ep^{1-q}$.
 \end{enumerate}
and the same penalty function:
$$
F(t) := \int_0^t F'( \tau ) \,d\tau ,
$$ 
where   $F'$ is also the inverse function of $f$ on $[0, +\infty)$. 

As in the steady case, we maximize over non-negative profiles. The interaction with the Couette flow is now measured relative to the level \(y=c\), so the penalized energy and admissible class become 
\begin{equation}\begin{split}
    \mathcal E(\omega)=&  \frac12\int_{\Omega }  \mathcal G\omega(z)  \omega(z )\, dz 
     - \frac12 \int_{\Omega } (y-c)^2 \omega(z) dz- \ep^{2}\int_{\Omega } F(\omega(z)) dz  .
\end{split}
\end{equation}
Fix two parameters: a  small exponent $0<q\le \frac12$ and a traveling speed $c\in(-1,1)$. For any $\ep>0$, define the admissible class
\begin{equation}
\mathcal A:=\left\{  0\le \omega \le  \ep^{1-q}, \; \int_{\Om}  \omega(z)  dz \leq \ep^{2}, \; \supp(\om) \subset \{|y-c| \le \ep^{1-q} \}\right\}. 
\end{equation}

With these shifted quantities, the traveling-wave variational problem takes the same form as before:
 \begin{problem}\label{problem:maximization_travel}
Given $\frac12 \ge q>0$ and $c\in(-1,1)$, for all sufficiently small $\ep>0$, find $\om_* \in \mathcal{A}     $ such that $\om_* \not \equiv 0$ and $ \mathcal{E}(\om_*) = \sup_{ \om \in \mathcal{A} }  \mathcal{E}(\om )$.
\end{problem}

\begin{proposition}\label{prop:maximizer_with_c} 
 Given any $f$ satisfying the assumptions and any exponent $\frac12 \ge q>0$, there exists $\ep_0>0$ such that for any $0<\ep\leq \ep_0$ there exists a  non-negative $\om_*  :\Om \to \R^+ $ satisfying all of the following.

\begin{itemize}
    \item 
     $\om_*$ is a solution to the steady equation:
    \begin{equation}
    \begin{cases}
     \om_* = f( \ep^{-2} \left(\psi_* - (y-c)^2/2 -\alpha ) \right), &\\
       -\Delta \psi_* = \om_* &
    \end{cases}
    \end{equation}
    and the positive constant $\alpha$ satisfies $\alpha \gtrsim \ep^2 |\log \ep|$.

    \item $ 0\le \om_* \le  \ep^{1- q}   $ and $\|\om_*\|_{L^1} \le \ep^2$.

    \item $\supp (\om_*) \subset  \{ |x|\le C |\log \ep|^{-1}\}\cap  \{ |y-c| \le C \ep |\log \ep|^\frac{1}{2}\}$.

\end{itemize}
    
\end{proposition}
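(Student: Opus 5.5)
The plan is to rerun the proof of Proposition \ref{prop:maximizer} from Section \ref{sec:flex} verbatim, with $y$ replaced by $y-c$ in the interaction energy and in the strip constraint. The only genuinely new ingredient is control of the distance from the strip $S_\ep^c:=\{|y-c|\le \ep^{1-q}\}$ to the walls $y=\pm1$. Since $c\in(-1,1)$ is fixed, we have $\dist(S_\ep^c,\partial\Om)\ge d_c:=\frac12(1-|c|)>0$ once $\ep_0$ is small depending on $c$. All constants below are therefore allowed to depend on $c$.

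\textbf{Existence and positivity.} First, Lemma \ref{lemma:G bounds} only uses $\|\om\|_{L^1},\|\om\|_{L^\infty}$, so $\sup_{\mathcal A}\mathcal E<\infty$ exactly as before. For positivity we test with the thin ellipse centered at $(0,c)$: the indicator of $B_\ep+(0,c)$ with the same amplitude. Steps 1 and 2 of Lemma \ref{lemma:positive energy} are unchanged, since $|y-c|\le\ep$ on the support. Step 3 needs $G(z,z')\ge -\frac1{2\pi}\log|z-z'|-C_c$ on the support. Here the regular part of the Green function is bounded below uniformly for points at distance $\ge d_c$ from $\partial\Om$, which follows from the explicit formula or from the maximum principle against the Green function of a disk of radius $d_c$. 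Hence $\sup_{\mathcal A}\mathcal E\gtrsim \ep^4|\log\ep|$.

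\textbf{Compactness and the Euler--Lagrange equation.} Steiner symmetrization acts only in $x$, so it preserves $\int(y-c)^2\om$ and the constraint set, and Proposition \ref{Prop Steiner} and the weak-limit lemma carry over. The amplitude bound, Lemma \ref{lemma:omega upperbound}, goes through with $B\subset\R\times[c-\ep^2,c+\ep^2]$, so that $\Delta_2\ge-\eta|A|\ep^4$ still holds. The first-variation argument of Proposition \ref{prop:Lagrange multiplier} then gives $H=\psi_*-(y-c)^2/2-\ep^2F'(\om_*)=\alpha$ on the support and $H\le\alpha$ on $S_\ep^c\setminus\supp\om_*$. Off the strip, $(y-c)^2\ge\ep^{2-2q}\gg\ep^2|\log\ep|\gtrsim\psi_*$, so $H<0$ there. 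The identity relating $\mathcal E(\om_*)$, $\alpha$ and $\int(\om_*F'-2F)$ is unchanged, because $\int(y-c)^2\om_*\ge0$. It yields $\alpha\gtrsim\ep^2|\log\ep|>0$, and hence $H\le\alpha$ on all of $\Om\setminus\supp\om_*$.

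\textbf{Support bounds and conclusion.} On the support, $\psi_*\ge\alpha\gtrsim\ep^2|\log\ep|$. Lemma \ref{lemma:psi decay by steiner} holds for any Steiner symmetric $\rho$ regardless of its vertical location, so $|x|\lesssim|\log\ep|^{-1}$. Likewise $(y-c)^2\le2\psi_*\lesssim\ep^2|\log\ep|$, which gives $|y-c|\lesssim\ep|\log\ep|^{1/2}$. Since $f(t)=0$ for $t\le0$, the relation $\om_*=f(\ep^{-2}(\psi_*-(y-c)^2/2-\alpha))$ holds on all of $\Om$.

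The only step I expect to need care is the uniform-in-$\ep$ lower bound on the regular part of $G$ near $y=c$ in the positivity lemma. This is also exactly where $|c|<1$ enters: as $c\to\pm1$ the constant $C_c$ degenerates, so $\ep_0$ must depend on $c$.
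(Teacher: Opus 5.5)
Your proposal is correct and follows essentially the same route as the paper: the paper also reruns Section~\ref{sec:flex} with $y$ replaced by $y-c$, using the shifted test ellipse, Steiner symmetrization in $x$ only, and the same Euler--Lagrange and multiplier arguments. Your additional observation goes beyond the paper's sketch: the lower bound on the regular part of $G$ in the positivity step needs the strip $\{|y-c|\le \ep^{1-q}\}$ to stay at distance $\sim 1-|c|$ from the walls, which you obtain by comparison with a disk Green function, and hence $\ep_0$ depends on $c$. The paper leaves this implicit, and it is consistent with the dependence $\ep_0=\ep_0(\delta,c)$ stated in Corollary~\ref{cor:flex}.
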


\subsection{Sketch of the proof}
We indicate why no new ingredient is needed. Each step of the proof in Section \ref{sec:flex} is stable under the shift from \(y\) to \(y-c\):
\begin{enumerate}

    \item The test configuration in Lemma \ref{lemma:positive energy} used to make the energy positive is simply shifted at height $y=c$, so the lower bound on the supremum is identical.

    \item Steiner symmetrization still acts only in the $x$-variable, hence it is unaffected by the shift in $y$.

    \item The compactness argument for maximizing sequences, the derivation of the Euler--Lagrange equation, and the lower bound for the multiplier $\alpha$ are the same as in the steady case.

\end{enumerate}

Proposition \ref{prop:maximizer_with_c} therefore yields Corollary \ref{cor:flex} by the same regularity and smallness estimates used in the proof of Theorem \ref{thm:flex}.

\section{Rigidity: non-existence}\label{sec:rigid}
We now turn to the rigidity side of the dichotomy. We prove that sufficiently small traveling-wave perturbations of the Couette flow must vanish once their vorticity lies above the critical threshold, namely in \(W^{s,p}\) with \(s>1+\frac1p\), or in \(C^{1+}\).

The key point is the singular behavior near the critical layer, where the horizontal transport speed \(y+c+u^x\) may vanish. Rewriting the transport equation gives
\[
\p_x\omega = - \frac{u^y \p_y\omega}{y+c+u^x}.
\]

\subsection{Non-existence in the Sobolev scale}
We begin with the Sobolev rigidity theorem. The proof relies on three elementary estimates near the critical layer. These estimates allow us to control the singular factor above in an elliptic energy estimate for \(u^y\), which then forces \(u^y\), and hence the vorticity, to vanish.

The first lemma concerns the smoothness of  1D  functions  using Hardy-type estimates. 
\begin{lemma}\label{lem:1d-zero-maximal}
Let $I=[-1,1]$ and let $h\in H^1(I)$. Assume that $h(y_0)=0$ for some $y_0\in I$. Then
\[
|h(y)| \lesssim |y-y_0|\, \mathcal M_y(\p_y h)(y)
\qquad \text{for a.e. }y\in I,
\]
where $\mathcal M_y$ denotes the Hardy--Littlewood maximal operator in the $y$-variable.
Consequently, for every $1<p\le \infty$,  
\[
\bigl\||y-y_0|^{-1} h\bigr\|_{L^p_y(I)}
\lesssim \|\p_y h\|_{L^p_y(I)}.
\]
\end{lemma}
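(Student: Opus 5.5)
Since $h\in H^1(I)$ is absolutely continuous, the fundamental theorem of calculus gives, for every $y\in I$,
\[
h(y)=h(y)-h(y_0)=\int_{y_0}^{y}\p_y h(t)\,dt .
\]
Fix $y\neq y_0$ and let $J$ be the closed interval with endpoints $y_0$ and $y$; then $J\subset I$ and $|J|=|y-y_0|$. Consequently
\[
|h(y)|\le \int_J|\p_y h(t)|\,dt = |y-y_0|\,\frac{1}{|J|}\int_J |\p_y h(t)|\,dt .
\]
The interval $J$ contains $y$ (as an endpoint). I will use the uncentered Hardy--Littlewood maximal operator on $I$, that is, the supremum of averages of $|\p_y h|$ over intervals in $I$ containing $y$. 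With this convention the average over $J$ is at most $\mathcal M_y(\p_y h)(y)$, which proves the pointwise bound. If the centered operator is preferred, I instead extend $\p_y h$ by zero outside $I$ and note that $J\subset[y-|y-y_0|,\,y+|y-y_0|]$. The average over $J$ is then at most twice the centered average over that symmetric interval. The two operators are pointwise comparable up to a factor of $2$, so either convention gives the bound with an absolute constant. The point $y=y_0$ is a single point and is harmless, so the pointwise estimate holds a.e.

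For the weighted bound, I divide the pointwise estimate by $|y-y_0|$. This gives $|y-y_0|^{-1}|h(y)|\lesssim \mathcal M_y(\p_y h)(y)$ for a.e. $y\in I$. Taking $L^p_y(I)$ norms and applying the Hardy--Littlewood maximal theorem on $\R$ to the zero extension of $\p_y h$ then yields
\[
\bigl\||y-y_0|^{-1}h\bigr\|_{L^p_y(I)}\lesssim \|\mathcal M_y(\p_y h)\|_{L^p(\R)}\lesssim_p \|\p_y h\|_{L^p_y(I)}, \qquad 1<p\le\infty .
\]
For $p=\infty$ this is trivial, since $\mathcal M_y$ is bounded by the $L^\infty$ norm. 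For $1<p<\infty$ the right-hand side may be infinite when $\p_y h\notin L^p$, in which case there is nothing to prove. The implicit constant depends only on $p$, and it blows up as $p\to1$, which is why $p=1$ is excluded.

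The only step needing any care is the choice of maximal operator, which is settled by comparability of the centered and uncentered versions. Everything else is the fundamental theorem of calculus combined with the maximal theorem. Equivalently, the $L^p$ bound is the classical one-dimensional Hardy inequality applied on each side of $y_0$.
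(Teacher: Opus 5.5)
Your proof is correct and follows the paper's argument: the fundamental theorem of calculus from $y_0$ to $y$ gives the pointwise maximal-function bound, and the $L^p$ estimate then follows from the Hardy--Littlewood maximal theorem for $1<p\le\infty$. Your remarks on centered versus uncentered maximal operators and on extending $\p_y h$ by zero outside $I$ spell out details the paper leaves implicit.
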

\begin{proof}
For $y\in I$, by the fundamental theorem of calculus,
\[
h(y)=\int_{y_0}^y \p_y h(\tau)\,d\tau.
\]
Hence
\[
|h(y)| \lesssim |y-y_0|\, \mathcal M_y(\p_y h)(y).
\]
The $L^p$ estimate follows from the $L^p$ boundedness of $\mathcal M_y$ for $1<p\le \infty$.
\end{proof}

Our analysis near the singular layer also relies on the following mixed-index embedding result.
\begin{lemma}\label{lem:mixed-norm-H1}
Let $\Omega=\R\times[-1,1]$. For any $p>2$ and $q<\infty$, there exists a constant $C=C(p,q)>0$ such that
\[
\|v\|_{L_x^{p}L_y^{q}}\le C \|v\|_{H^1(\Omega)}
\]
for all $v\in H^1(\Omega)$.
\end{lemma}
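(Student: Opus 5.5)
The plan is to reduce the estimate to one-dimensional Sobolev embeddings, applied separately in $y$ and in $x$, and then glue the two with Minkowski's integral inequality.

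\textbf{Step 1 (embedding in $y$).} On the bounded interval $I=[-1,1]$ the one-dimensional embedding $H^1(I)\hookrightarrow L^\infty(I)\subset L^q(I)$ holds for every $q\le\infty$. Since $I$ is bounded, $\|g\|_{L^q_y}\lesssim\|g\|_{L^\infty_y}$. So it suffices to prove
\[
\|v\|_{L^p_xL^\infty_y}\lesssim\|v\|_{H^1(\Omega)},\qquad 2<p<\infty,
\]
and the case $p=\infty$ is handled the same way below. This stronger estimate loses nothing compared with the stated one.

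\textbf{Step 2 (reduction to a one-dimensional $x$-statement).} For fixed $x$, the fundamental theorem of calculus together with averaging over $y$ gives the pointwise slice bound
\[
\sup_y|v(x,y)|\le \tfrac12\int_I|v(x,y')|\,dy'+\int_I|\p_yv(x,y')|\,dy'.
\]
This reduces matters to bounding, in $L^p_x$, the functions $x\mapsto\int_I|v(x,y')|\,dy'$ and $x\mapsto\int_I|\p_yv(x,y')|\,dy'$.

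\textbf{Step 3 (the main obstacle).} The second function $\int_I|\p_y v(x,y')|\,dy'$ is the delicate one: $\p_yv$ is only in $L^2(\Omega)$, so this function lies only in $L^2_x$, not in $L^p_x$ for $p>2$. This is where I expect the real difficulty. To get around it, I will not use the $L^\infty_y$ bound directly. Instead I will use the fractional one-dimensional embedding $H^{s}(I)\hookrightarrow L^\infty(I)$ for $s>\frac12$, combined with interpolation.

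By Lemma \ref{lemma:Sobolev_slicing}, $v\in L^2_y(I;H^1_x(\mathbb R))\cap L^2_x(\mathbb R;H^1_y(I))$. Take Fourier transform in $x$ and expand in an orthonormal basis in $y$, using an extension of $v$ to $\mathbb R^2$ with comparable $H^1$ norm. This reduces the claim to the anisotropic embedding on $\mathbb R^2$:
\[
H^1(\mathbb R^2)\hookrightarrow L^p_xL^q_y,\qquad 2\le q<\infty,\ \tfrac1p+\tfrac1q\ge 0 .
\]
I will prove this by applying Minkowski's inequality (valid since $p,q\ge2$) twice:
\[
\|v\|_{L^p_xL^q_y}\le\bigl\|\|v\|_{L^p_x}\bigr\|_{L^q_y}\lesssim\bigl\|\|v\|_{H^{a}_x}\bigr\|_{L^q_y}\le\bigl\|\|v\|_{L^q_y}\bigr\|_{H^a_x}\lesssim\|v\|_{H^a_xH^b_y},
\]
with $a=\frac12-\frac1p$ and $b=\frac12-\frac1q$. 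The last steps use the one-dimensional embeddings $H^a(\mathbb R)\hookrightarrow L^p$ and $H^b(\mathbb R)\hookrightarrow L^q$, together with Minkowski once more for the $H^a_x$ norm.

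Finally, the Fourier weight satisfies $(1+|\xi|)^{2a}(1+|\eta|)^{2b}\lesssim 1+|\xi|^2+|\eta|^2$ because $a+b=1-\frac1p-\frac1q\le1$. This gives the bound by $\|v\|_{H^1}$. If $q=\infty$ were needed, $b>\frac12$ would force $a<\frac12$; that is fine for $p<\infty$, but since $q<\infty$ is assumed we never need it.
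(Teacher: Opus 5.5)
\textbf{The gap.} The step that fails is the first exchange of norms in your Step 3:
\[
\|v\|_{L^p_xL^q_y}\le\bigl\|\|v\|_{L^p_x}\bigr\|_{L^q_y}.
\]
Minkowski's integral inequality gives $\bigl\|\|f\|_{L^q_y}\bigr\|_{L^p_x}\le\bigl\|\|f\|_{L^p_x}\bigr\|_{L^q_y}$ only when $p\ge q$: the outer exponent on the left must dominate the inner one. The condition $p,q\ge 2$ that you cite is not the relevant one.

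For $q>p$ the inequality is false. Take disjoint intervals $J_1,\dots,J_N\subset[0,1]$ of length $1/N$ and set $f=\sum_{i=1}^N\mathbf{1}_{[i,i+1)}(x)\mathbf{1}_{J_i}(y)$; smooth versions behave the same way. Then the left side is $N^{\frac1p-\frac1q}$ while the right side is $1$.

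Since $[-1,1]$ is bounded, the estimate for $q\le p$ does not imply the one for $q>p$. Yet $q>p$ is exactly the regime that matters: the paper reduces to $q>\max\{p,2\}$, and later applies the lemma in $L^{2p'}_xL^{2q}_y$ with $q$ large. Your second exchange, $\bigl\|\|v\|_{H^a_x}\bigr\|_{L^q_y}\le\bigl\|\|v\|_{L^q_y}\bigr\|_{H^a_x}$, is fine because it only needs $q\ge 2$. So as written, your argument proves the lemma only for $q\le p$.

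\textbf{How to repair it.} Avoid exchanging the order altogether.
\begin{enumerate}
\item Apply $H^b(I)\hookrightarrow L^q(I)$ for each fixed $x$, which gives $\|v\|_{L^p_xL^q_y}\lesssim\|v\|_{L^p(\mathbb R;H^b_y)}$.
\item Use the Hilbert-valued embedding $H^a(\mathbb R;H^b_y)\hookrightarrow L^p(\mathbb R;H^b_y)$ with $a=\frac12-\frac1p$. Write $v=(1-\p_x^2)^{-a/2}g$. Positivity of the Bessel kernel $G_a$ gives $\|v(x,\cdot)\|_{H^b_y}\le (G_a*\|g\|_{H^b_y})(x)$, so the scalar embedding applies.
\item Finish with your Fourier-weight comparison, which is correct since $a+b\le 1$.
\end{enumerate}

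\textbf{How the paper avoids the issue.} It embeds in $y$ within the $L^p$-based scale, $W^{\sigma,p}_y\hookrightarrow L^q_y$ with $\sigma=\frac1p-\frac1q>0$, so the outer exponent stays $p$ throughout. The slicing lemma (Lemma \ref{lemma:Sobolev_slicing}) then bounds $L^p_xW^{\sigma,p}_y$ by $W^{\sigma,p}(\Omega)$. The isotropic embedding $H^{1-\frac1p-\frac1q}(\Omega)\hookrightarrow W^{\sigma,p}(\Omega)$ concludes.

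Your Steps 1--2 are an abandoned detour and can be dropped.
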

\begin{proof} 
Since $y\in [-1,1]$ is bounded, by H\"older's inequality in $y$, it suffices to consider $q > \max\{p,2\}$ such that $\sigma:= \frac{1}{p} - \frac{1}{q}>0 $. By 1D Sobolev embedding
\begin{equation}
    \|v\|_{L_x^pL_y^q(\Omega)} \lesssim \|v\|_{L_x^p W_y^{\sigma,p}(\Omega)}.
\end{equation}
Then by Lemma \ref{lemma:Sobolev_slicing}, the right-hand side is controlled by $ \|v\|_{ W^{\sigma,p}(\Omega)} $. Since $p>2 $, the result  follows immediately by the 2D Sobolev embedding (omitting $\Omega$ for brevity)
\begin{equation}
\|v\|_{ W^{\sigma,p} }\lesssim \|v\|_{ H^{\sigma+2(\frac{1}{2} -\frac{1}{p})} } = \|v\|_{ H^{1-\frac{1}{p} -\frac{1}{q}} } \le \|v\|_{ H^{1}}
\end{equation}
thanks to $ \sigma= \frac{1}{p} - \frac{1}{q}>0$.

\end{proof}

The previous lemma supplies the mixed \(L_x^pL_y^q\) control needed for the energy method.   The next lemma handles the singular weight \(|y-y_*(x)|^{-1}\) appearing near the critical layer, using the  fractional Sobolev regularity of $\om$.

\begin{lemma}\label{lem:weighted-critical-layer-sobolev}
Let  $s>1+\frac1p$. There exists $ C_{s,p} >0$ such that the following holds.
Let $\phi\in W^{s,p}(\Omega)$ and let $y_*=y_*(x)\in [-1,1]$ be any measurable function of $x$.
Assume that for a.e. $x$ in a measurable set $B\subset \R$ \footnote{This is well-defined. Indeed Lemma \ref{lemma:Sobolev_slicing} implies that $\partial_y\phi(x,\cdot)$ is a continuous function for a.e. $x \in R$.},
\[
\p_y \phi(x,y_*(x))=0.
\]
Then for every $v\in H^1(\Omega)$,
\[
\int_B \int_{-1}^1
|v(x,y)|^2 \frac{|\p_y \phi(x,y)|}{|y-y_*(x)|}\,dy\,dx
\le C_{s,p} \|\phi\|_{W^{s,p}(\Omega)} \|v\|_{H^1(\Omega)}^2.
\]
\end{lemma}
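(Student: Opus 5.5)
Fix $x\in B$ and write $g(y):=\p_y\phi(x,y)$. The plan is to treat the weight $|g(y)|/|y-y_*(x)|$ one slice at a time with Lemma \ref{lem:1d-zero-maximal}, and then recombine the slices with H\"older's inequality in $y$ and then in $x$. Since $g(y_*(x))=0$, Lemma \ref{lem:1d-zero-maximal} with $y_0=y_*(x)$ gives
\[
\frac{|g(y)|}{|y-y_*(x)|}\lesssim \mathcal M_y(\p_y g)(y).
\]
This requires more than $H^1$ of $g$. Since $s>1+\frac1p$, we have $\p_y^2\phi(x,\cdot)\in W^{s-2,p}_y$, and $s-2$ may be negative, so it is safer to use a fractional Hardy inequality. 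I would write $s=1+\frac1p+\sigma$ with $\sigma>0$ small; by lowering $s$, we may assume $\sigma<1-\frac1p$. By Lemma \ref{lemma:Sobolev_slicing}, $g\in W^{\frac1p+\sigma,p}_y(I)$ for a.e. $x$. The one-dimensional Morrey embedding then gives
\[
|g(y)|=|g(y)-g(y_*)|\lesssim |y-y_*|^{\sigma'}\,\|g\|_{W^{\frac1p+\sigma,p}_y},
\]
with $\sigma'=\sigma$ (when $\frac1p+\sigma<1$). Hence
\[
\frac{|g(y)|}{|y-y_*|}\lesssim |y-y_*|^{\sigma-1}\,\|\p_y\phi(x,\cdot)\|_{W^{s-1,p}_y}=:|y-y_*|^{\sigma-1}A(x).
\]

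Next I would integrate against $|v(x,y)|^2$ in $y$. The function $|y-y_*|^{\sigma-1}$ lies in $L^{r}_y$ for every $r<1/(1-\sigma)$, uniformly in $y_*\in[-1,1]$. H\"older's inequality in $y$ then gives
\[
\int_{-1}^1 |v|^2\frac{|g|}{|y-y_*|}\,dy\lesssim A(x)\,\|v(x,\cdot)\|_{L^{2r'}_y}^2,
\]
where $r'<\infty$ is finite but large.

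The last step is H\"older's inequality in $x$. By Lemma \ref{lemma:Sobolev_slicing}, $A\in L^p_x$ with $\|A\|_{L^p_x}\lesssim\|\phi\|_{W^{s,p}}$. We then need $\|v\|_{L^{2p'}_xL^{2r'}_y}^2\lesssim\|v\|_{H^1}^2$, which is exactly Lemma \ref{lem:mixed-norm-H1} applied with exponents $2p'>2$ and $2r'<\infty$. This requires $p<\infty$, which holds in Theorem \ref{thm:rigid_intro_1}; the case $p=1$ gives $2p'=\infty$ and would need separate care, for instance via the embedding $W^{s,1}\subset W^{s-\eta,p}$ for some $p>1$. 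Combining the three steps gives the stated bound.

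The main obstacle is the pointwise slice estimate. It relies on the Morrey/H\"older embedding in one dimension at the sharp index, where $s-1>\frac1p$ is exactly what is needed for $\p_y\phi(x,\cdot)$ to be $C^{0,\sigma}$. It also relies on Lemma \ref{lemma:Sobolev_slicing} passing the fractional regularity of $\p_y\phi$ to a.e. slice with an $L^p_x$ bound. The latter requires applying the slicing lemma to $\p_y\phi\in W^{s-1,p}(\Omega)$, which is legitimate since $s-1\ge 0$.
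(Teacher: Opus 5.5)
Your proposal is correct and is essentially the paper's proof: you apply the 1D H\"older embedding slice by slice to $\partial_y\phi(x,\cdot)\in W^{s-1,p}_y$ using its vanishing at $y_*(x)$, use the integrability of $|y-y_*(x)|^{\sigma-1}$ to reduce to $\|v(x,\cdot)\|_{L^{2r'}_y}^2$, and close with H\"older in $x$ together with Lemma \ref{lemma:Sobolev_slicing} and Lemma \ref{lem:mixed-norm-H1}. The initial appeal to Lemma \ref{lem:1d-zero-maximal} is unnecessary, since you abandon it. Your reduction to $\frac1p+\sigma<1$, which makes the H\"older exponent genuinely $\sigma<1$, harmlessly tidies the paper's use of $C^{0,s-1-\frac1p}$.
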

\begin{proof}
For each fixed $x\in B$, define
\[
g_x(y):=\p_y\phi(x,y), \qquad y\in(-1,1).
\]
By assumption,
\[
g_x\bigl(y_*(x)\bigr)=0 \quad \text{for a.e. $x\in B$.}
\]

Since $\phi\in W^{s,p}(\Omega)$ with $s>1+\frac1p$, we have for a.e. $x\in B$ that
\[
g_x\in W^{s-1,p}(-1,1),
\qquad s-1>\frac1p.
\]
Hence, by the 1D Sobolev embedding
\[
W^{s-1,p}(-1,1)\hookrightarrow C^{0,s-1-\frac1p}([-1,1]),
\]
we obtain
\[
|g_x(y)-g_x(y_*(x))|
\lesssim
|y-y_*(x)|^{s-1-\frac1p}\,
\|g_x\|_{W^{s-1,p}(-1,1)}.
\]
Since $g_x(y_*(x))=0$, it follows that
\[
|g_x(y)|
\lesssim
|y-y_*(x)|^{s-1-\frac1p}\,
\|g_x\|_{W^{s-1,p}(-1,1)}.
\]
Therefore,
\[
\frac{|\p_y\phi(x,y)|}{|y-y_*(x)|}
=
\frac{|g_x(y)|}{|y-y_*(x)|}
\lesssim
|y-y_*(x)|^{s-2-\frac1p}\,
\|g_x\|_{W^{s-1,p}(-1,1)}.
\]
Substituting this into the left-hand side, we get
\[
\begin{aligned}
&\int_B\int_{-1}^1
|v(x,y)|^2\frac{|\p_y\phi(x,y)|}{|y-y_*(x)|}\,dy\,dx \\
&\qquad\lesssim
\int_B
\|g_x\|_{W^{s-1,p}(-1,1)}
\left(
\int_{-1}^1
|v(x,y)|^2 |y-y_*(x)|^{s-2-\frac1p}\,dy
\right)\,dx.
\end{aligned}
\]
Since $s>1+\frac1p$, the weight $|y-y_*(x)|^{s-2-\frac1p}$ is integrable on $(-1,1)$.
Hence, for each fixed $x\in B$, choosing $q>1$ large,
\[
\int_{-1}^1
|v(x,y)|^2 |y-y_*(x)|^{s-2-\frac1p}\,dy
\lesssim
\|v(x,\cdot)\|_{L^{2q}(-1,1)}^2.
\]
Therefore, it follows from Lemma \ref{lem:mixed-norm-H1} that
\begin{align*}
    \int_B\int_{-1}^1
|v(x,y)|^2\frac{|\p_y\phi(x,y)|}{|y-y_*(x)|}\,dy\,dx
&\lesssim
\int_B
\|v(x,\cdot)\|_{L^{2q}(-1,1)}^2
\|\p_y\phi(x,\cdot)\|_{W^{s-1,p}(-1,1)}
\,dx\\
&\lesssim \|\p_y\phi\|_{L^p_xW^{s-1,p}_y(\Om)} \|v\|_{L_x^{2p'}L_y^{2q}}^2 \\
&
 \lesssim \|\p_y \phi\|_{W^{s-1,p}(\Om)} \|v\|_{L_x^{2p'}L_y^{2q}}^2 \\
&\lesssim \|\phi\|_{W^{s,p}(\Om)} \|v\|_{L_x^{2p'}L_y^{2q}}^2\lesssim \|\phi\|_{W^{s,p}(\Om)} \|v\|_{H^1(\Om)}^2.
\end{align*}
\end{proof}

We now combine the preceding ingredients in the proof of Sobolev rigidity.
\begin{theorem}\label{thm:sobolev-rigidity}
For any $p>1$ and $s>1+\frac1p$, there exists $\ep_s>0$ such that if $\om \in W^{s,p}(\Omega)\cap L^2(\Om)$ satisfies, for some $c\in \R$,
\begin{equation}\label{eq:sob-rigid-steady}
(y+c+u^x)\p_x \om + u^y \p_y \om =0
\end{equation}
and
\begin{equation}\label{eq:sob-rigid-small}
\|\om\|_{W^{s,p}(\Omega)} \le \ep_s,
\end{equation}
then necessarily $\om\equiv 0$.
\end{theorem}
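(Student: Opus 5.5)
Since $\omega\in W^{s,p}$ with $s>1+\frac1p$ is small, the velocity $u=\nabla^\perp\psi$ satisfies $\|u\|_{L^\infty}+\|\nabla u\|_{L^\infty}\lesssim \ep_s$ by elliptic regularity and Sobolev embedding. I first dispose of speeds $c$ with $|c|$ large. If $|c|\ge 2$, say, then $|y+c+u^x|\ge \frac12$ on $\Omega$, so the equation can be solved for $\p_x\omega$ and no critical layer is present. For $|c|<2$, the transport speed $y+c+u^x$ is strictly increasing in $y$ for each $x$, because $\p_y(y+u^x)\ge 1-C\ep_s>0$. Hence for each $x$ there is at most one zero $y_*(x)\in[-1,1]$; when there is none, I take $y_*(x)$ to be the nearest endpoint. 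In either case
\[
|y+c+u^x(x,y)|\gtrsim |y-y_*(x)|.
\]
Let $B$ be the set of $x$ for which an interior zero exists. On the curve $y=y_*(x)$, $x\in B$, the equation reduces to $u^y\p_y\omega=0$.

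The energy identity comes from $\Delta u^y=-\p_x\omega$; the sign convention must be checked carefully here. I multiply this identity by $u^y$, which vanishes on $\p\Omega$, and integrate. Using the equation $\p_x\omega=-u^y\p_y\omega/(y+c+u^x)$, this gives
\[
\|\nabla u^y\|_{L^2}^2=\Bigl|\int \frac{(u^y)^2\,\p_y\omega}{y+c+u^x}\Bigr|\lesssim \int |u^y|^2\frac{|\p_y\omega|}{|y-y_*(x)|}.
\]
For $x\notin B$ the denominator is bounded below by a multiple of $|y-y_*|$ with $y_*$ an endpoint. For those slices I use Lemma \ref{lem:1d-zero-maximal}-type Hardy control, since $u^y$ vanishes at $y=\pm1$. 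Alternatively, I bound $|\p_y\omega|\lesssim\|\omega\|_{C^{1}}$ and apply Hardy's inequality to $u^y/|y\mp1|$.

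For $x\in B$ I apply Lemma \ref{lem:weighted-critical-layer-sobolev} with $\phi=\omega$ and $v=u^y$, which requires $\p_y\omega(x,y_*(x))=0$. If $u^y(x,y_*(x))\ne0$, this holds by the equation. On slices where $u^y(x,y_*(x))=0$ I cannot conclude that $\p_y\omega$ vanishes, so I split differently there. The modified plan uses Lemma \ref{lem:1d-zero-maximal} applied to $h=u^y(x,\cdot)$, which gives $|u^y|\lesssim|y-y_*|\mathcal M_y(\p_y u^y)$. The integrand is then bounded by
\[
|u^y|\,\mathcal M_y(\p_yu^y)\,|\p_y\omega|\le \|\p_y\omega\|_{L^\infty}\,|u^y|\,\mathcal M_y(\p_yu^y),
\]
whose integral is at most $\ep_s\|u^y\|_{L^2}\|\nabla u^y\|_{L^2}$. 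Combining both cases, with Poincaré in $y$ on the bounded interval, gives
\[
\|\nabla u^y\|_{L^2}^2\le C\ep_s\|u^y\|_{H^1}^2\lesssim C\ep_s\|\nabla u^y\|_{L^2}^2,
\]
and therefore $u^y\equiv0$ for $\ep_s$ small.

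Finally, $u^y=-\p_x\psi\equiv0$ together with $\psi\in H^1_0$ on the infinite channel forces $\psi$ to be independent of $x$ and square integrable in $x$, hence $\psi\equiv0$ and $\omega=0$. Here the hypothesis $\omega\in L^2$ is used to place $\psi$ in $H^1$. The main obstacle I expect is the case analysis on the critical curve: justifying the vanishing of $\p_y\omega$ there, or using the Hardy alternative on the remaining slices, and doing this measurably in $x$. The case $p\le 2$, where $\p_y\omega$ may fail to be bounded, needs the mixed-norm Lemma \ref{lem:mixed-norm-H1} in place of the $L^\infty$ bound.
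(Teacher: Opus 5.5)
Your architecture matches the paper's. You use the monotone speed $F_x(y)=y+c+u^x$, the reference point $y_*(x)$ with the endpoint convention, and, after your refinement, the same split. Slices with $u^y(x,y_*(x))=0$ are treated by Lemma \ref{lem:1d-zero-maximal}, and the remaining slices by Lemma \ref{lem:weighted-critical-layer-sobolev}. Your separate case $|c|\ge 2$ is unnecessary, since the endpoint convention already covers it. The correct sign is $\Delta u^y=+\p_x\om$, which is harmless because you take absolute values.

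\textbf{The gap.} It lies in the estimate on the slices where $u^y(x,y_*(x))=0$, and in your alternative Hardy bound on the endpoint slices. Both use $\|\p_y\om\|_{L^\infty}\lesssim\ep_s$ (resp.\ $\|\om\|_{C^1}\lesssim\ep_s$). In two dimensions, $\p_y\om\in W^{s-1,p}(\Omega)$ embeds into $L^\infty$ only when $(s-1)p>2$, i.e.\ $s>1+\frac2p$. The hypothesis $s>1+\frac1p$ gives only $sp>2$, which bounds $\nabla u$ but not $\nabla\om$. On a.e.\ slice $\p_y\om(x,\cdot)$ is continuous, but its supremum lies only in $L^p_x$, not $L^\infty_x$. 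So throughout the range $1+\frac1p<s\le 1+\frac2p$ your bound $\ep_s\|u^y\|_{L^2}\|\nabla u^y\|_{L^2}$ is unjustified. This range is nonempty for every finite $p$, not only for $p\le 2$ as you suggest.

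\textbf{The repair.} This is what the paper does: put $\p_y\om$ in some $L^r$ with $r>2$ rather than in $L^\infty$. Use
\[
\int_\Omega |u^y|\,\mathcal M_y(\p_y u^y)\,|\p_y\om|\le \|u^y\|_{L^{2q}}\,\|\mathcal M_y(\p_y u^y)\|_{L^2}\,\|\p_y\om\|_{L^{2q'}},
\]
and choose $2q'>2$ with $W^{s-1,p}(\Omega)\hookrightarrow L^{2q'}(\Omega)$. This is possible because $(s-1)p>1$ pushes the Sobolev exponent above $2p$. Then control $\|u^y\|_{L^{2q}}\lesssim\|\nabla u^y\|_{L^2}$ by Lemma \ref{lem:mixed-norm-H1} together with Poincar\'e in $y$. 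This treats all $p>1$ at once, including your endpoint slices.

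\textbf{The critical-curve obstacle.} The paper resolves the obstacle you flag through Lemma \ref{lemma:Sobolev_slicing}. For a.e.\ $x$, the slices $\p_x\om(x,\cdot)$ and $\p_y\om(x,\cdot)$ lie in $W^{s-1,p}_y\hookrightarrow C^0$. The equation holds for a.e.\ $y$ on a.e.\ slice by Fubini, so by continuity it holds at every $y$. In particular it holds at $y=y_*(x)$, where $F_x=0$, giving $u^y\,\p_y\om=0$ there.
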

\begin{proof}
Let $\psi$ be the stream function, so that
\begin{equation}\label{eq:stream-vort-rigidity}
u=\nabla^\perp\psi,\qquad -\Delta \psi=\om,\qquad \psi|_{\p\Omega}=0.
\end{equation}

Since $\om\in W^{s,p}(\Omega)$ with $s>1+\frac1p$, standard elliptic estimates and Sobolev embedding yield
\[
\|\nabla u\|_{L^\infty(\Omega)}
\lesssim \|\om\|_{W^{s,p}(\Omega)}
\le \ep_s.
\]
Therefore, after choosing $\ep_s>0$ sufficiently small in
\eqref{eq:sob-rigid-small},  we may assume
\begin{equation}\label{eq:ux-small}
\|\p_y u^x\|_{L^\infty(\Omega)} \le \frac12.
\end{equation}

For each fixed $x\in \R$, define the function 
\[
F_x(y):=y+c+u^x(x,y), \qquad y\in[-1,1].
\]
By \eqref{eq:ux-small}, $y\mapsto F_x(y)$ is Lipschitz continuous and
\[
\p_y F_x(y)=1+\p_y u^x(x,y)\ge \frac12,
\]
so $F_x$ is strictly increasing in $y$.

For each $x\in \R$, we define a reference point $y_*(x)\in[-1,1]$ as follows:
\begin{itemize}
    \item if $F_x$ has a zero in $[-1,1]$, then $y_*(x)$ is that unique zero;
    \item if $F_x>0$ on $[-1,1]$, set $y_*(x)=-1$;
    \item if $F_x<0$ on $[-1,1]$, set $y_*(x)=1$.
\end{itemize}
In all cases, the monotonicity of $F_x$ gives
\begin{equation}\label{eq:critical-layer-lower}
|F_x(y)| \ge \frac12 |y-y_*(x)|
\qquad \text{for all }(x,y)\in \Omega.
\end{equation}

Next, since $u^y=-\p_x\psi$, we see that $u^y$ is a weak solution to the equation
\[
\Delta u^y=\p_x \om.
\]
Also, $\om\in L^2(\Omega)$ implies $u^y\in H^1(\Omega)$ by elliptic regularity for
\eqref{eq:stream-vort-rigidity}. Since $u^y|_{y=\pm1}=0$, integration by parts gives
\begin{equation}\label{eq:uy-energy}
\|\nabla u^y\|_{L^2(\Omega)}^2
= -\int_\Omega u^y\,\p_x \om\,dxdy.
\end{equation}
Since $ \om \in W^{s,p}$ with $s>1$, the steady equation \eqref{eq:sob-rigid-steady} holds in the $L^p(\Om)$ sense, and we may rewrite
\[
\p_x\om = - \frac{u^y \p_y \om}{y+c+u^x}
= -\frac{u^y \p_y \om}{F_x(y)}  \qquad\text{a.e. in }\Omega,
\]
and therefore \eqref{eq:uy-energy} gives
\begin{equation}\label{eq:energy-identity-sob}
\|\nabla u^y\|_{L^2(\Omega)}^2
\le \int_\Omega \frac{|u^y|^2 |\p_y \om|}{|F_x(y)|}\,dxdy.
\end{equation}

We next split the $x$-variable according to what happens at the reference point $y_*(x)$.

\medskip
\noindent
\textbf{Step 1. Splitting into \(A\) and \(B\).}
Using the reference function $F_x(y)$, we define two measurable sets $A,B \subset \R$,
\[
A:=\Bigl\{x\in \R:\ u^y(x,y_*(x))=0\Bigr\},
\qquad
B:=\R\setminus A.
\]

We claim that   for a.e. $x\in B$ 
\begin{equation}\label{eq:dyw-zero-on-B}
\p_y \om(x,y_*(x))=0.
\end{equation}

Indeed, by Lemma \ref{lemma:Sobolev_slicing} we may first restrict to those $x\in B$ where $ \p_x \om (x,\cdot)$ and  $\p_y \om(x,\cdot) \in W^{s-1,p}_y([-1,1]) $. Since the transport equation \eqref{eq:sob-rigid-steady} holds at the level $L^p(\Om)$, we may also assume $ x\in B$ is such that \eqref{eq:sob-rigid-steady} holds for  a.e.  $y\in [-1,1]$. Then  the one-dimensional Sobolev embedding (with \(s-1>1/p\)) gives, for a.e. $x\in B$,
\begin{equation}\label{eq:sob-rigid-steady ab_con}
 y\mapsto     \p_y \om(x,y )\quad \text{and} \quad y \mapsto \p_x \om(x,y) \quad \text{are     continuous  for $y\in [-1,1]$}
\end{equation}

For such $x\in B$, if $y_*(x)\in\{-1,1\}$, then $u^y(x,y_*(x))=0$ by the boundary condition implying $x \in A$, a contradiction. For those $x\in B$, we have $y_*(x)\in(-1,1)$, and hence by definition $F_x(y_*(x))=0$. Since \eqref{eq:sob-rigid-steady} holds in $L^p(\Om)$, by \eqref{eq:sob-rigid-steady ab_con}, for a.e. $x\in  B$, there holds
\begin{equation}\label{eq:sob-rigid-steady B condition}
u^y(x,y_*(x))\, \p_y \om(x,y_*(x))=0.
\end{equation}

We correspondingly decompose the right-hand side of \eqref{eq:energy-identity-sob} as
\[
I_A+I_B
:=
\int_{A\times(-1,1)} \frac{|u^y|^2 |\p_y \om|}{|F_x(y)|}\,dxdy
+
\int_{B\times(-1,1)} \frac{|u^y|^2 |\p_y \om|}{|F_x(y)|}\,dxdy.
\]

\medskip
\noindent
\textbf{Step 2. Estimate of \(I_A\).}
Fix $x\in A$. By construction, $u^y(x,y_*(x))=0$. Lemma \ref{lem:1d-zero-maximal} applied to $h(y)=u^y(x,y)$ gives
\[
|u^y(x,y)|
\lesssim |y-y_*(x)|\, g(x,y),
\qquad
g(x,y):=\mathcal M_y(\p_y u^y(x,\cdot))(y).
\]
Combining this with \eqref{eq:critical-layer-lower}, this yields
\[
\frac{|u^y|^2 |\p_y \om|}{|F_x(y)|}
\lesssim |u^y|\, g\, |\p_y \om|.
\]
Hence
\[
I_A \lesssim \int_\Omega |u^y|\, g\, |\p_y \om|\,dxdy.
\]

Since $s-1>\frac1p$, there exists $q>1$ such that
\begin{equation}\label{eq:sob-embed-q}
2<2q' < \infty
\qquad\text{and}\qquad
W^{s-1,p}(\Omega)\hookrightarrow L^{2q'}(\Omega).
\end{equation}
By H\"older's inequality with exponents $(2q,2,2q')$,
\[
I_A
\lesssim
\|u^y\|_{L^{2q}(\Omega)}
\|g\|_{L^2(\Omega)}
\|\p_y \om\|_{L^{2q'}(\Omega)}.
\]
By Lemma~\ref{lem:mixed-norm-H1} (applied with exponent $2q>2$) and Poincaré's inequality
in the $y$-variable,
\[
\|u^y\|_{L^{2q}(\Omega)}
\lesssim \|u^y\|_{H^1(\Omega)}
\lesssim \|\nabla u^y\|_{L^2(\Omega)}.
\]
Also, by the \(L^2\)-boundedness of the 1D Hardy--Littlewood maximal operator,
\[
\|g\|_{L^2(\Omega)}
\lesssim \|\p_y u^y\|_{L^2(\Omega)}
\le \|\nabla u^y\|_{L^2(\Omega)}.
\]
Finally, by \eqref{eq:sob-embed-q},
\[
\|\p_y \om\|_{L^{2q'}(\Omega)}
\lesssim \|\om\|_{W^{s,p}(\Omega)}
\le \ep_s.
\]
Therefore,
\begin{equation}\label{eq:IA-final}
I_A \lesssim \ep_s \|\nabla u^y\|_{L^2(\Omega)}^2.
\end{equation}

\medskip
\noindent
\textbf{Step 3. Estimate of \(I_B\).}
For a.e. $x\in B$, we have \eqref{eq:dyw-zero-on-B}. Hence by Lemma \ref{lem:weighted-critical-layer-sobolev} with $\phi=\om$, $v=u^y$, and $y_0(x)=y_*(x)$, we get
\[
I_B
\lesssim
\int_{B\times(-1,1)}
\frac{|u^y|^2 |\p_y \om|}{|y-y_*(x)|}\,dxdy
\lesssim
\|\om\|_{W^{s,p}(\Omega)} \|u^y\|_{H^1(\Omega)}^2.
\]
Using again Poincaré's inequality in \(y\) and the given assumption \eqref{eq:sob-rigid-small}, we infer
\begin{equation}\label{eq:IB-final}
I_B
\lesssim
\|\om\|_{W^{s,p}(\Omega)} \|u^y\|_{H^1(\Omega)}^2
\lesssim
\ep_s \|\nabla u^y\|_{L^2(\Omega)}^2.
\end{equation}

\medskip
\noindent
\textbf{Step 4. Conclusion.}
Combining \eqref{eq:energy-identity-sob}, \eqref{eq:IA-final}, and \eqref{eq:IB-final}, we obtain 
\[
\|\nabla u^y\|_{L^2(\Omega)}^2
\lesssim
\ep_s \|\nabla u^y\|_{L^2(\Omega)}^2.
\]
Choosing $\ep_s>0$ sufficiently small, it follows that
\[
\nabla u^y\equiv 0.
\]
Since $u^y=0$ on $y=\pm1$, we conclude that
\[
u^y\equiv 0.
\]
By incompressibility, $
\p_x u^x = -\p_y u^y =0$,
and therefore $u^x$ is independent of $x$. Since $u^x\in L^2(\Omega)$ and $\Omega$ is unbounded
in the $x$-direction, this implies $ u^x\equiv 0 $. 
Hence $u\equiv 0$, and therefore
\[
\om=\p_x u^y-\p_y u^x\equiv 0.
\]
This completes the proof.
\end{proof}

\subsection{Non-existence in the H\"older scale}
We next prove the analogous rigidity statement in the H\"older scale. The argument follows the same critical-layer mechanism as in the Sobolev case, with the fractional Sobolev control replaced by the pointwise decay supplied by \(C^{1,\alpha}\) regularity.

\begin{theorem}\label{thm:holder-rigidity}
For any $0<\alpha\le 1$, there exists $\ep_\alpha>0$ such that if
$\om\in C^{1,\alpha}(\Omega)\cap L^2(\Omega)$ satisfies, for some $c\in\R$,
\begin{equation}\label{eq:holder-rigid-steady}
(y+c+u^x)\p_x\om+u^y\p_y\om=0
\end{equation}
and
\begin{equation}\label{eq:holder-rigid-small}
\|\om\|_{C^{1,\alpha}(\Omega)}\le \ep_\alpha,
\end{equation}
then necessarily $\om\equiv 0$.
\end{theorem}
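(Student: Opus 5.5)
I will follow the proof of Theorem \ref{thm:sobolev-rigidity} line by line. The only change is that the fractional Sobolev inputs (Lemma \ref{lem:weighted-critical-layer-sobolev} and the embedding \eqref{eq:sob-embed-q}) get replaced by pointwise H\"older bounds.

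First I will set up the critical layer. Since $\om\in C^{1,\alpha}\cap L^2$, Schauder/elliptic estimates on the channel give $\|\nabla u\|_{L^\infty}\lesssim \|\om\|_{C^{0,\alpha}}+\|\om\|_{L^2}$. This is the first point that needs care: the $L^2$ norm is not controlled by $\ep_\alpha$. I expect to handle it through interpolation on unit cells, using $\|\nabla u\|_{L^\infty}\lesssim \|\om\|_{C^{0,\alpha}}$ (the Green kernel decays exponentially in $x$), or else by noting that in the Sobolev proof this bound was only used in the form $\|\p_y u^x\|_{L^\infty}\le \frac12$. Once $\|\p_y u^x\|_{L^\infty}\le\frac12$, I define $F_x(y)=y+c+u^x(x,y)$ and $y_*(x)$ exactly as before, so \eqref{eq:critical-layer-lower} holds. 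As before, $u^y\in H^1$ satisfies the energy identity \eqref{eq:uy-energy}, and using the equation pointwise (now classical, since $\om\in C^1$) I obtain \eqref{eq:energy-identity-sob}.

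Next I split $\R=A\cup B$ with $A=\{u^y(x,y_*(x))=0\}$. On $A$, Lemma \ref{lem:1d-zero-maximal} and \eqref{eq:critical-layer-lower} give
\[
I_A\lesssim \int_\Om |u^y|\,\mathcal M_y(\p_y u^y)\,|\p_y\om|\,dxdy .
\]
Here I will place $\p_y\om$ in $L^\infty$, $u^y$ in $L^2$ and the maximal function in $L^2$. Poincar\'e in $y$ then yields $I_A\lesssim \|\om\|_{C^1}\|\nabla u^y\|_{L^2}^2$. This is simpler than the Sobolev case and needs no mixed norm lemma.

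On $B$, the equation evaluated at the point $y_*(x)\in(-1,1)$ where $F_x=0$ gives $u^y\p_y\om=0$, so $\p_y\om(x,y_*(x))=0$ by continuity. The $C^{0,\alpha}$ regularity of $\p_y\om$ then gives
\[
|\p_y\om(x,y)|\le \ep_\alpha|y-y_*(x)|^\alpha,
\]
hence
\[
I_B\lesssim \ep_\alpha\int_B\!\int_{-1}^1|u^y|^2|y-y_*(x)|^{\alpha-1}\,dy\,dx .
\]
The weight is integrable in $y$, so H\"older in $y$ bounds the inner integral by $\|u^y(x,\cdot)\|_{L^{2r}_y}^2$ with $r>1/\alpha$. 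I then integrate in $x$ and use Lemma \ref{lem:mixed-norm-H1} with exponents $(2,2r)$. Lemma \ref{lem:mixed-norm-H1} requires $p>2$, though; for $L^2_xL^{2r}_y$ I will instead use the 1D Sobolev bound $\|h\|_{L^{2r}_y}\lesssim\|h\|_{H^1_y}$ slicewise and integrate in $x$. This gives $I_B\lesssim\ep_\alpha\|\nabla u^y\|_{L^2}^2$.

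Combining the two bounds gives $\|\nabla u^y\|_{L^2}^2\lesssim\ep_\alpha\|\nabla u^y\|_{L^2}^2$, hence $u^y\equiv0$. The conclusion then follows exactly as in Step 4 of Theorem \ref{thm:sobolev-rigidity}. The main obstacle I anticipate is the first step: getting a uniform Lipschitz bound on $u^x$ from the $C^{1,\alpha}$ norm alone, without the $L^2$ norm entering. I would first try the localized Biot--Savart estimate using the exponential kernel decay from Lemma \ref{lemma:kernel G}.
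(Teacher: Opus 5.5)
Your proposal is correct and follows the paper's proof essentially step for step: the same critical-layer function $F_x$ and point $y_*(x)$, the same $A$/$B$ splitting, $I_A$ bounded by putting $\p_y\om$ in $L^\infty$ and using the maximal function plus Poincar\'e, and $I_B$ bounded via $|\p_y\om|\le\ep_\alpha|y-y_*(x)|^\alpha$ and slicewise 1D Sobolev (the paper uses $L^\infty_y$ where you use $L^{2r}_y$, which makes no difference). Your concern about the Lipschitz bound is resolved exactly as you suggest: $\int_\Om|G(z,z')|\,dz'\lesssim 1$ by Lemma \ref{lemma:kernel G} gives $\|\psi\|_{L^\infty}\lesssim\|\om\|_{L^\infty}$, and local boundary Schauder estimates on unit cells then give $\|\nabla u\|_{L^\infty}\lesssim\|\om\|_{C^{0,\alpha}}$ with no $L^2$ norm entering, a point the paper passes over.
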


\begin{proof}
As in the proof of Theorem \ref{thm:sobolev-rigidity}, choose $\ep_\alpha>0$ sufficiently small so that
\begin{equation}\label{eq:holder-ux-small}
\|\p_y u^x\|_{L^\infty(\Omega)}\le \frac12.
\end{equation}
For each fixed $x\in\R$, define
\[
F_x(y):=y+c+u^x(x,y), \qquad y\in[-1,1].
\]
Then
\[
\p_y F_x(y)=1+\p_y u^x(x,y)\ge \frac12,
\]
so $F_x$ is strictly increasing in $y$. Define $y_*(x)\in[-1,1]$ exactly as in the proof of Theorem \ref{thm:sobolev-rigidity}. In particular,
\begin{equation}\label{eq:holder-critical-layer-lower}
|F_x(y)|\ge \frac12 |y-y_*(x)|
\qquad\text{for all }(x,y)\in\Omega.
\end{equation}

Since $u^y|_{y=\pm1}=0$ and $\Delta u^y=\p_x\om$, integrating by parts gives
\[
\|\nabla u^y\|_{L^2(\Omega)}^2
=
-\int_\Omega u^y\,\p_x\om\,dxdy
\le
\int_\Omega \frac{|u^y|^2\,|\p_y\om|}{|F_x(y)|}\,dxdy.
\]
We decompose the right-hand side as in the proof of Theorem \ref{thm:sobolev-rigidity}:
\[
I_A+I_B
:=
\int_{A\times(-1,1)} \frac{|u^y|^2\,|\p_y\om|}{|F_x(y)|}\,dxdy
+
\int_{B\times(-1,1)} \frac{|u^y|^2\,|\p_y\om|}{|F_x(y)|}\,dxdy,
\]
where
\[
A:=\Bigl\{x\in \R:\ u^y(x,y_*(x))=0\Bigr\},
\qquad
B:=\R\setminus A.
\]
For $x\in B$, exactly as before,
\begin{equation}\label{eq:holder-dyw-zero}
\p_y\om(x,y_*(x))=0.
\end{equation}

\medskip
\noindent
\textbf{Estimate of $I_A$.}
For $x\in A$, Lemma \ref{lem:1d-zero-maximal} applied to $h(y)=u^y(x,y)$ yields
\[
|u^y(x,y)|
\lesssim
|y-y_*(x)|\,g(x,y),
\qquad
g(x,y):=\mathcal M_y(\p_y u^y(x,\cdot))(y).
\]
Together with \eqref{eq:holder-critical-layer-lower},
\[
\frac{|u^y|^2\,|\p_y\om|}{|F_x(y)|}
\lesssim
|u^y|\,g\,|\p_y\om|.
\]
Hence
\[
I_A
\lesssim
\|\p_y\om\|_{L^\infty(\Omega)}\int_\Omega |u^y|\,g\,dxdy.
\]
Since the maximal operator is bounded on $L^2$ in the $y$-variable,
\[
\|g\|_{L^2(\Omega)}
\lesssim
\|\p_y u^y\|_{L^2(\Omega)}
\le
\|\nabla u^y\|_{L^2(\Omega)}.
\]
Moreover, since $u^y=0$ on $y=\pm1$, the Poincar\'e inequality in the $y$-variable gives
\[
\|u^y\|_{L^2(\Omega)}
\lesssim
\|\p_y u^y\|_{L^2(\Omega)}
\le
\|\nabla u^y\|_{L^2(\Omega)}.
\]
Therefore,
\[
I_A
\lesssim
\|\om\|_{C^{1,\alpha}(\Omega)}\|u^y\|_{L^2(\Omega)}\|g\|_{L^2(\Omega)}
\lesssim
\ep_\alpha \|\nabla u^y\|_{L^2(\Omega)}^2.
\]

\medskip
\noindent
\textbf{Estimate of $I_B$.}
For $x\in B$, by \eqref{eq:holder-dyw-zero} and the $C^{1,\alpha}$ regularity of $\om$,
\[
|\p_y\om(x,y)|
\le
\|\om\|_{C^{1,\alpha}(\Omega)}\,|y-y_*(x)|^\alpha.
\]
Using \eqref{eq:holder-critical-layer-lower},
\[
\frac{|\p_y\om(x,y)|}{|F_x(y)|}
\lesssim
\|\om\|_{C^{1,\alpha}(\Omega)}\,|y-y_*(x)|^{-1+\alpha}.
\]
Hence
\[
I_B
\lesssim
\|\om\|_{C^{1,\alpha}(\Omega)}
\int_B\int_{-1}^1 |u^y(x,y)|^2 |y-y_*(x)|^{-1+\alpha}\,dy\,dx.
\]
Since $\alpha>0$, the weight $|y-y_*(x)|^{-1+\alpha}$ is integrable on $(-1,1)$ uniformly in $x$, so
\[
\int_{-1}^1 |u^y(x,y)|^2 |y-y_*(x)|^{-1+\alpha}\,dy
\lesssim
\|u^y(x,\cdot)\|_{L^\infty(-1,1)}^2.
\]
Using the 1D Sobolev embedding and the boundary conditions $u^y(x,\pm1)=0$,
\[
\|u^y(x,\cdot)\|_{L^\infty(-1,1)}
\lesssim
\|u^y(x,\cdot)\|_{H^1(-1,1)}
\lesssim
\|\p_y u^y(x,\cdot)\|_{L^2(-1,1)}.
\]
Therefore,
\[
I_B
\lesssim
\|\om\|_{C^{1,\alpha}(\Omega)}
\int_\R \|\p_y u^y(x,\cdot)\|_{L^2(-1,1)}^2\,dx
\lesssim
\ep_\alpha \|\nabla u^y\|_{L^2(\Omega)}^2.
\]

Combining the estimates for $I_A$ and $I_B$, we obtain
\[
\|\nabla u^y\|_{L^2(\Omega)}^2
\lesssim
\ep_\alpha \|\nabla u^y\|_{L^2(\Omega)}^2.
\]
Choosing $\ep_\alpha>0$ sufficiently small, it follows that
\[
\nabla u^y\equiv 0.
\]
This  completes the proof. 
\end{proof}


\bibliographystyle{plain}
\bibliography{1}

\end{document}